\definecolor{shadecolor}{RGB}{220,220,220}
\newtheorem{theorem}{Theorem}[section]
\newtheorem{proposition}[theorem]{Proposition}
\newtheorem{definition}[theorem]{Definition}
\newtheorem{lemma}[theorem]{Lemma}
\newtheorem{corollary}[theorem]{Corollary}
\newtheorem{example}[theorem]{Example}
\newtheorem{remark}[theorem]{Remark}
\newcommand{\R}{\mathbb{R}}
\newcommand{\N}{\mathbb{N}}
  \newcommand{\Lip}{\operatorname{Lip}}
\newcommand{\lip}{\operatorname{lip}}
\newcommand{\LIP}{\operatorname{LIP}}
\newcommand{\SLip}{\operatorname{SLip}}
\renewenvironment{proof}{\emph{Proof.}} {\quad \hfill $\blacksquare$ \newline} 
\begin{document}

\title{{Pointwise semi-Lipschitz functions and Banach-Stone theorems}}

\author[F. Venegas M.]{Francisco Venegas Mart\'inez$^{1}$}
\address{$^{1}$ Universidad de O'Higgins. Instituto de Ciencias de la Ingenier\'ia. \\ Rancagua, O'Higgins, Chile.}
\email{francisco.venegas@uoh.cl}

\author[E. Durand-Cartagena]{Estibalitz Durand-Cartagena$^{2}$}
\address{$^{2}$ Depto. de Matem\'atica Aplicada, ETSI Industriales, UNED\\
28040 Madrid, Spain.} 
\email{edurand@ind.uned.es}

\author[J. {\'A}. Jaramillo]{Jes{\'u}s {\'A}. Jaramillo$^{3}$}
\address{$^{3}$ Depto. de An\'alisis Ma\-te\-m\'a\-ti\-co y Matem\'atica Aplicada, UCM\\
28040 Madrid, Spain.}
\email{jaramil@mat.ucm.es}

\thanks{The research for this work was conducted while the first author was visiting the Department of Mathematical Analysis and Applied Mathematics at UCM during the spring and summer semesters of 2022. J.{\'A}.J. and E.D-C. are partially supported by grant PID2022-138758NB-I00 (Spain).}

\keywords{Quasi-metric spaces; pointwise semi-Lipschitz functions; Banach-Stone Theorem.}
\subjclass[2020]{54E40, 54C35}

\begin{abstract}
We study the fundamental properties of pointwise semi-Lipschitz functions between asymmetric spaces, which are the natural asymmetric counterpart of pointwise Lipschitz functions. We also study the influence that partial symmetries of a given space may have on the behavior of pointwise semi-Lipschitz functions defined on it. Furthermore, we are interested in characterizing the pointwise semi-Lipschitz structure of an asymmetric space in terms of real-valued pointwise semi-Lipschitz functions defined on it. By using two algebras of functions naturally associated to our spaces of pointwise real-valued semi-Lipschitz functions, we are able to provide two Banach-Stone type results in this context. In fact, these results are obtained as consequences of a general Banach-Stone type theorem of topological nature, stated for abstract functional spaces, which is quite flexible and can be applied to many spaces of continuous functions over metric and asymmetric spaces.
\end{abstract}
\maketitle
\tableofcontents
\newpage
During the last years there has been an increasing interest in exploring asymmetric structures in different contexts, and in particular in studying the behavior and properties of spaces endowed with an {\em asymmetric distance}. By this we mean a kind of distance function which does not necessarily satisfies the symmetric property of a usual distance. These spaces are often called {\em asymmetric} or {\em irreversible} metric spaces. We refer to \cite{DJV}, \cite{KZ} and \cite{OZ} as a sample of recent contributions in this line.

In this paper, we focus on the so-called {\em pointwise semi-Lipschitz functions} between asymmetric spaces, which are the natural counterpart in the asymmetric setting of the pointwise Lipschitz functions studied in \cite{DJ} in a metric context. In the real-valued case, pointwise semi-Lipschitz functions encompass also other previously known asymmetric notions, such as functions with {\em finite slope}, which were introduced in \cite{GMT} and have been widely used since then (see e.g. \cite{AGS} or \cite{Io}, and references therein). We also consider the space of functions with {\em uniformly bounded}  pointwise semi-Lipschitz constant, which has a richer structure. 
In the first section (in particular, subsections 1.1 and 1.2), we will illustrate through various examples the similarities and differences between the different functional spaces involved in both the symmetric and asymmetric cases. Additionally, we will provide specific conditions under which these spaces coincide. (Corollary \ref{quasiconvex}). Furthermore, we study the influence that  partial symmetries of a given space may have on the behavior of pointwise semi-Lipschitz functions defined on it. We consider three types of partial symmetry: pointwise, local and global. We compare them and use them to derive useful properties of various classes of pointwise semi-Lipschitz functions concerning stability when composed with real-valued functions or functional separation of sets. All this is done along Section 1.3. 

We are also interested in characterizing the pointwise semi-Lipschitz structure of an asymmetric space $X$ in terms of real-valued pointwise semi-Lipschitz functions defined on $X$. In this way we make a connection with the so-called Banach-Stone-type theorems. This a long and fruitful line of research which, starting with the classical Banach-Stone Theorem, looks for the characterization of topological, metric, smooth, or other kind of structure of a given space $X$, in terms of the algebraic or topological-algebraic structure of a suitable space of real-valued continuous functions on $X$. We refer to \cite{GJEx} for more information about this subject. In our case, this represents a major problem, since our spaces of  pointwise semi-Lipschitz functions do not have a nice algebraic structure. In fact, due to asymmetry, these spaces are not even linear. We are nevertheless able to present in Section 1.4 two Banach-Stone type results, by using two algebras of functions naturally associated to our spaces of pointwise semi-Lipschitz functions (see Theorem \ref{csl-BS} and Theorem \ref{dsl-BS}.) The complete proof of both results is postponed until Section 2, where we obtain a general Banach-Stone type theorem of topological nature, stated for abstract functional spaces, which is quite flexible and can be applied to many spaces of continuous functions over metric and asymmetric spaces, provided some general hypothesis are met (see Theorem \ref{BSt}.) This  result is much inspired by the main result of \cite{V}, where a Banach-Stone type result is given for non-reversible Finsler manifolds using a suitable space of smooth semi-Lipschitz 
functions. Many ideas used in \cite{V} do not rely on specific properties of Finsler manifolds and semi-Lipschitz functions. In fact, most of the 
key notions used in the aforementioned result, such as the definition of the structure space, the weak-star topology of the dual of an asymmetric normed space and the embedding into the structure space, make sense for a general {\em extended asymmetric normed algebra} $\mathcal{A}$ of functions over $X$. Our Theorem \ref{BSt} makes use of these notions, so we start Section 2 by recalling the 
required preliminaries of this general setting. After all this, and as a consequence of Theorem \ref{BSt}, we also deduce a general Banach-Stone type theorem in the Lipschitz setting (Theorem \ref{BSlip}), with a stronger conclusion,  which needs additional requirements on the function spaces to be 
used. Finally, we present an intermediate version in the pointwise Lipschitz case (Theorem \ref{BSpoint}), which can be applied to spaces of functions with bounded metric slope.

\section{Lipschitz analysis on quasi-metric spaces}\label{subsecpointwise}

\subsection{Metric slopes vs pointwise Lipschitz constant}

Let $(X,d)$ be a metric space. For a given continuous function $f:X\to\R$,
the \em pointwise Lipschitz constant \em at a non-isolated point $x_0\in X$ is defined as
$$
\mathrm{Lip} f(x_0):=\limsup_{\substack{x\to x_0\\ x\neq x_0}}
\frac{|f(x_0)-f(x)|}{d(x,x_0)}.
$$
If $x_0$ is an isolated point of $X$, we define $\mathrm{Lip} f(x_0)=0$.

This constant has been extensively used in the field of analysis on metric spaces as a substitute for the modulus of the derivative of a Lipschitz function defined in non-smooth settings. Note that, if $f$ is a continuous real function defined on $\R^n$ (or, more generally, on a Banach space) which is differentiable  at a point $x_0$, then $ \mathrm{Lip} f(x_0)=\|\nabla f(x_0)\|$. However, the pointwise Lipschitz constant  is always non-negative, so it is not so useful if one wants to determine ``descent'' or ``ascent'' directions (and detect minima or maxima). To this end, an {\em asymmetric} object is needed.

\begin{definition}[Metric slope]\em
For a function $f:X\to\R$, the \em metric slope \em at a point $x_0\in X$ is  $$
|\partial f|^-(x_0):=\limsup_{\substack{x\to x_0\\ x\neq x_0}}
\frac{\max\{f(x_0)-f(x),0\}}{d(x_0,x)}=\left\{\begin {array}{l} 0\quad \text{if $x_0$ is a local minimizer of $f$,}\\[10pt]
\displaystyle{\limsup_{\substack{x\to x_0\\ x\neq x_0}}\,\frac{f(x_0)-f(x)}{d(x_0,x)}}\quad \text{otherwise}.
\end{array}
\right.
$$\end{definition}

By definition, if $x_0$ is a local minimizer of $f$, then $|\partial f|^-(x_0)=0$. For example, the function $f(x)=|x|$ on $\R$ is not differentiable at $x_0=0$, has a minimum at $x_0=0$ and $|\partial f|^-(x_0)=0$. However notice that, in general, having $|\partial f|^-(x_0)=0$ at a point $x_0$ does not necessarily imply that $x_0$ is a minimum of $f$ (consider, for example, the function $f(x)=-x^2$ on $\R$).

The metric slope, also called {\em local slope}, {\em descendent slope} or {\em calmness rate}, was introduced in \cite{GMT} (see also \cite{AGS}) in connection with steepest descent evolutionary problems. The notation should not be confused with the {\em relaxed slope} $|\partial^- f|$ defined in \cite[Section 2.3]{AGS}.

An \em ascendent \em metric slope, denoted by $|\partial f|^+ (x_0)$, can be defined in a similar manner, replacing ${f(x_0)-f(x)}$ by $f(x)-f(x_0)$ in the numerator. In this case, we have that
$$
|\partial f|^+ (x_0)=|\partial (-f)|^- (x_0).
$$

The relation between metric slopes and the pointwise Lipschitz constant is clear. If $(X,d)$ is a metric space and $f:X\to \R$ is a function, it is straightforward to see that
\begin{equation}\label{Liprepresentation}
\mathrm{Lip} f(x_0)=\max\{|\partial f|^- (x_0),|\partial f|^+ (x_0)\}.
\end{equation}
Indeed, in order to obtain this, we just need to use that
$$
|f(x)-f(x_0)|=\max\{\{f(x)-f(x_0),0\},\{f(x_0)-f(x),0\}\}.
$$

The following simple lemma will be useful later on.

\begin{lemma}\label{differentiable}
If $f:[a,b]\to \R$ is differentiable at $x_0\in(a,b)$, then
 \[
 |f'(x_0)|=\Lip f(x_0)=|\partial f|^- (x_0)=|\partial f|^+ (x_0).
 \]
\end{lemma}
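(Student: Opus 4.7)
The plan is to reduce all three quantities to the single identity furnished by differentiability at $x_0$, namely
\[
f(x)=f(x_0)+f'(x_0)(x-x_0)+\varepsilon(x)(x-x_0),\qquad \varepsilon(x)\to 0 \text{ as } x\to x_0,
\]
and then track signs carefully, since the denominators in the slope definitions are the (positive) distance $|x-x_0|$ while the natural Newton quotient is signed.

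First I would handle $\Lip f(x_0)$ directly: dividing $|f(x)-f(x_0)|$ by $|x-x_0|$ gives $|f'(x_0)+\varepsilon(x)|$, which converges to $|f'(x_0)|$ as $x\to x_0$. Since the limit exists, the $\limsup$ coincides with it, so $\Lip f(x_0)=|f'(x_0)|$.

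Next, for $|\partial f|^-(x_0)$ I would split the $\limsup$ into the two one-sided contributions. For $x\to x_0^+$, we have $x-x_0=|x-x_0|>0$ and $\frac{f(x_0)-f(x)}{x-x_0}\to -f'(x_0)$, so after taking the positive part the one-sided $\limsup$ equals $\max\{-f'(x_0),0\}$. For $x\to x_0^-$, we have $x_0-x=|x-x_0|>0$ and $\frac{f(x_0)-f(x)}{x_0-x}\to f'(x_0)$, yielding $\max\{f'(x_0),0\}$. Taking the maximum of the two,
\[
|\partial f|^-(x_0)=\max\{\max\{-f'(x_0),0\},\max\{f'(x_0),0\}\}=|f'(x_0)|.
\]
Then, using the relation $|\partial f|^+(x_0)=|\partial(-f)|^-(x_0)$ noted in the text together with $(-f)'(x_0)=-f'(x_0)$, the same computation applied to $-f$ gives $|\partial f|^+(x_0)=|f'(x_0)|$. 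Alternatively, once the two slopes have been shown to equal $|f'(x_0)|$, one may invoke the representation formula \eqref{Liprepresentation} to recover $\Lip f(x_0)=|f'(x_0)|$ without the separate calculation in the first paragraph.

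I do not expect a real obstacle here: the argument is essentially bookkeeping. The only point to be attentive to is that the denominator in the slope definitions is the unsigned distance, so the right-hand and left-hand behaviors of $\frac{f(x_0)-f(x)}{|x-x_0|}$ have opposite signs and must be treated separately before applying the positive-part truncation.
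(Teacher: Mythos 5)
Your argument is correct, and it takes a genuinely different route from the paper's. The paper partitions the domain according to the sign of $f(x_0)-f(x)$, setting $A=\{x: f(x)\geq f(x_0)\}$ and $B=\{x: f(x)\leq f(x_0)\}$, observes that the truncated quotient vanishes on $A$ and equals the full absolute difference quotient on $B$, and then runs a case analysis on whether $x_0$ accumulates in both sets or fails to accumulate in one of them (the latter cases being exactly the local extrema, where everything collapses to $0$). You instead partition by the sign of $x-x_0$: on each side the signed Newton quotient converges to $f'(x_0)$ up to sign, the denominator $|x-x_0|$ can be pulled inside with a definite sign, and continuity of $t\mapsto\max\{t,0\}$ turns each one-sided $\limsup$ into an honest limit, namely $\max\{-f'(x_0),0\}$ from the right and $\max\{f'(x_0),0\}$ from the left; the maximum of the two is $|f'(x_0)|$. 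Your version avoids the case analysis entirely and makes the role of the positive-part truncation completely explicit; it does rely on both one-sided approaches being available, but that is guaranteed by $x_0\in(a,b)$ and is equally implicit in the paper's use of the two-sided derivative. The reduction of $|\partial f|^{+}$ to $|\partial(-f)|^{-}$ and the optional appeal to \eqref{Liprepresentation} to recover $\Lip f(x_0)$ are both sound.
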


\begin{proof}
Fix $x_0\in(a,b)$. Let $A=\{x:f(x_0)\leq f(x)\}$ and $B=\{x:f(x_0)\geq f(x)\}$. Recall that
$$
|\partial f|^- (x_0)=\lim_{r\to 0}\sup_{d(x,x_0)<r}\frac{\max\{f(x_0)-f(x),0\}}{|x_0-x|}.$$
Observe that
\begin{equation}\label{AandB}
\sup_{\stackrel{d(x,x_0)<r}{x\in A}}\frac{\max\{f(x_0)-f(x),0\}}{|x_0-x|}=0\quad\text{and}\quad\sup_{\stackrel{d(x,x_0)<r}{x\in B}}\frac{\max\{f(x_0)-f(x),0\}}{|x_0-x|}=\sup_{\stackrel{d(x,x_0)<r}{x\in B}}\frac{|f(x_0)-f(x)|}{|x_0-x|}.\
\end{equation}
We now distinguish three cases:
\begin{enumerate}
\item If $x_0$ is an accumulation point of $A\cap B$, then by \eqref{AandB}, $|\partial f|^- (x_0)= |f'(x_0)|$.
\item If $x_0$ is not an accumulation point of $B$, there exists $r_{x_0}>0$ such that $B(x_0,r_{x_0})\cap B=\emptyset$, then $x_0$ is a local minimum and so $|\partial f|^- (x_0)= |f'(x_0)|=0$.
\item If $x_0$ is not an accumulation point of $A$, there exists $r_{x_0}>0$ such that $B(x_0,r_{x_0})\cap A=\emptyset$ and so $|\partial f|^- (x_0)= |f'(x_0)|$ (in this case $x_0$ is a local maximum so in particular $|\partial f|^- (x_0)= |f'(x_0)|=0$).
\end{enumerate}
A similar argument shows that $ |f'(x_0)|=|\partial f|^+ (x_0)$ and the conclusion follows.
\end{proof}




Of course, in general, the ascendent and descendent metric slopes can be different. For example,  the function $f:\R \to \R$ defined by $f(x)=0$ for $x\leq 0$, and $f(x)=-\sqrt{x}$ for $x>0$, satisfies that $0=|\partial f|^+ (0)<|\partial f|^- (0)=\Lip f(0)=+\infty$. On the other hand, the function $f:\R \to \R$ defined by $g(x)=-2x$ for $x\leq 0$, and $g(x)=-x$ for $x>0$, is a function for which $1=|\partial g|^- (0)<|\partial g|^+ (0)=\Lip g(0)=2$.


\subsection{Lipschitz-type functions in quasi-metric spaces}

The notions of metric slope and pointwise Lipschitz constant can be easily brought to the general context of quasi-metric spaces. Both objects will appear as particular cases of this general setting. Let us recall the definition of quasi-metric space, where we lose the symmetric property of the ``distance''.

\begin{definition}[Quasi-metric space]\em
A \em quasi-metric space \em $(X,d)$ is a set $X\neq\emptyset$ and a function $d:X\times
X\rightarrow [0,\infty)$, called {\em quasi-metric}, such that:
\begin{enumerate}
\item $d(x,x)=0$ for all $x\in X$.
\item $d(x,y)=d(y,x)=0$ implies $x=y$.
\item $d(x,z)\leq d(x,y)+d(y,z)$ for all $x,y,z\in X$.
\end{enumerate}
\end{definition}

Non-reversible Finsler manifolds (that is, when the Finsler metric is in general non-reversible) are remarkable examples of quasi-metric spaces (see \cite{BCS} for extensive information about Finsler manifolds). Quasi-metric spaces are also called in literature {\em irreversible metric spaces} (see e.g. \cite{KZ}). Since quasi-metrics do not necessarily possess the symmetric property of a distance, they are also called {\em asymmetric spaces}. Given a quasi-metric $d$, one can always consider a symmetrized version $d^s$ (which is a true metric). The most common {\em symmetrizations} of a given quasi-metric $d$ are
\[
d^s:=\max\{d(x,y),d(y,x)\} \quad \text{ or }\quad d_{av}^s:=\frac{d(x,y)+d(y,x)}{2}.
\]

Given a quasi-metric space $(X, d)$, the {\em forward open ball} of center $x_0\in X$ and radius $r>0$ is defined as
$$
B^+(x_0, r):= \{x \in X \, : \, d(x_0, x) < r \}.
$$
We will always consider on $(X, d)$ the so-called {\em forward topology,}  which has as a base the family of forward open balls (the {\em backward topology} can be defined in a similar manner, as one might expect, through {\em backward open balls}). Recall that a sequence $(x_n)$ in $(X,d)$ {\em converges} to $x_0$ {\em in the forward topology} if,  and only if, $d(x_0,x_n)$ converges to $0$ in $(\R,|\cdot|)$. 
\

In the case of the real line $\R$ we have a natural asymmetric structure, given by the quasi-metric $d_u$ defined by
\begin{equation}\label{Rdu}
d_u(x,y)=\max\{y-x,0\} \text{ where } x,y\in\R.
\end{equation}
In fact, this quasi-metric is associated to an \em asymmetric norm \em on $\R$. As we will observe in Definition \ref{defasymmnorm}, an asymmetric norm $u$ on a real vector space differs from a norm in that it is positively homogeneous, but it does not necessarily satisfy $u(x)=u(-x)$. Here the quasi-metric $d_u$ above is associated to the asymmetric norm $u$ on $\R$  defined by
\[
u(x)=\max\{x,0\}\quad \text{ for every } x\in\R.
\]

For more information regarding quasi-metric spaces and asymmetric normed spaces we refer to \cite{cobzas} (see also \cite[Section 2]{DSV}).

In what follows, we define some Lipschitz-type functions between quasi-metric spaces. Recall that a function $f:(X, d_X) \to (Y, d_Y)$ between quasi-metric spaces is said to be {\em semi-Lipschitz} if there exists a constant $L\geq 0$ such that, for every $x, y\in X$
\begin{equation}\label{S-LIP}
d_Y (f(x), f(y)) \leq L \, d_X(x, y).
\end{equation}
The least constant $L$ satisfying the above inequality is called the {\em semi-Lipschitz constant} of $f$, and is denoted by $\mathrm{SLIP}f$. Note that, if $(Y, \rho) = (\R, d_u)$,  inequality \eqref{S-LIP} is equivalent to:
$$
f(x) - f(y) \leq L \, d(x,y).
$$
Semi-Lipschitz functions are the natural generalization of Lipschitz functions to the context of quasi-metric spaces. These  functions were first considered in \cite{RS} and have been widely studied since then (see e.g. \cite{cobzas} and references therein). Now we introduce the pointwise counterpart  of semi-Lipschitz functions.

\begin{definition}[Pointwise semi-Lipschitz functions]\label{poitwisesemi}\em
Let $(X,d_X)$ and $(Y,d_Y)$ be quasi-metric spaces and $f:X\to Y$ be a function. We say that $f$ is {\em pointwise semi-Lipschitz} at  $x_0\in X$ if there exist $\alpha\geq 0$ and $\delta>0$ such that
\begin{equation}\label{pointslip}
    d_X(x_0,x)<\delta \implies  d_Y(f(x_0),f(x))\leq \alpha d(x_0,x).
\end{equation}

Note that this condition is trivially satisfied if $x_0$ is an isolated point. The infimum of all constants $\alpha$ satisfying inequality~\eqref{pointslip} is called the {\em pointwise semi-Lipschitz constant} of $f$ at $x_0$, and will be denoted by $\mathrm{SLip}f(x_0)$. If $x_0$ is an isolated point, we have that $\mathrm{SLip} f (x_0)=0$. If $x_0$ is non-isolated and $d(x_0,x)>0$ for all $x\in X\setminus \{x_0\}$,  $\mathrm{SLip}f(x_0)$ can be computed as
$$
\mathrm{SLip}f(x_0)= \limsup_{x \to x_0} \frac{ d_Y(f(x_0), f(x))}{d_X (x_0, x)}.
$$
A function $f:X\to Y$ is said to be \em pointwise semi-Lipschitz \em if it is pointwise semi-Lipschitz at every point, that is, $\mathrm{SLip} f (x)<+\infty$ for every $x\in X$.
\end{definition}

\begin{remark}\em
Of course, every semi-Lipschitz function is pointwise semi-Lipschitz. Furthermore, note that if  $f:(X, d_X) \to (Y, d_Y)$ is pointwise semi-Lipschitz at a point $x_0$, then $f$  is forward-forward continuous at  $x_0$ (i.e., with respect to the forward topologies of $X$ and $Y$). 
\end{remark}

\begin{remark}\label{generalizacionpointwise}\em
    It is easy to check that this definition encompasses both, the definitions of the pointwise Lipschitz constant and the metric slope in the following way:
\begin{itemize}
    \item If $(X,d)$ is a metric space and $(Y, d_Y)=(\R,|\cdot|)$, then  for every $x\in X$ we have that
    $$
    \mathrm{SLip} f(x)=\mathrm{Lip} f(x).
    $$

    \item If $(X,d)$ is a metric space and $(Y,d_Y)=(\R,d_u)$, then for every $x\in X$ we have that
    $$
    \mathrm{SLip} f(x)=|\partial f|^+ (x)=|\partial (-f)|^- (x).
    $$
    Therefore, for $f:X\to(\R,d_u)$, the formula~\eqref{Liprepresentation} can be rewritten as
\[
\mathrm{Lip} f(x)=\max\{\mathrm{SLip} f(x),\mathrm{SLip} (-f)(x)\}.
\]
\end{itemize}
\end{remark}

\noindent {\bf Convention:} In the case of real-valued functions $f: X \to \R$ defined on a quasi-metric space $X$, we set by convention that, when computing $\mathrm{Lip} f(x)$, $\R$ is assumed to carry the usual metric, and when computing $\mathrm{SLip} f(x)$, $\R$ is endowed with the quasi-metric $d_u$.

\begin{example}\label{funsemiLip}\em
 For a function $f:(X, d) \to (\R,|\cdot|)$ on a quasi-metric space $X$, a straightforward computation shows that $f$ is upper semicontinuous  at any point $x_0\in X$ where $\mathrm{SLip}~f(x_0)<+\infty$. On the other hand,  pointwise semi-Lipschitz functions do not need to be continuous for the usual metric on $\R$. Consider for example the function $f:(\R,|\cdot|)\to \R$ defined by
\begin{equation*}\label{ex1}
f(x)=\left\{\begin {array}{ll} 1&\text{if }x\geq 0\\[10pt]
0&\text{if } x< 0.
\end{array}
\right.
\end{equation*}
It is clear that  $f$ is discontinuous for the usual metric,  but
\[
\mathrm{SLip} f(x_0)=\limsup_{\substack{x\to 0\\ x\neq 0}}
\frac{\max\{f(x)-f(x_0),0\}}{|x-x_0|}=0\,\,\text{ at every }x_0\in\R.
 \]
\end{example}

\begin{remark}\em
If $(X,d)$ is a  {\bf metric space}, the space of Lipschitz functions ${f:(X, d) \to (\R,|\cdot|)}$ coincides with the space of semi-Lipschitz functions $f:(X, d)\to (\R,d_u)$. Indeed, since for any $t,s\in \mathbb{R}$ we have ${\max\{t-s,0\}\leq|t-s|}$, any Lipschitz function will also be semi-Lipschitz with respect to $d_u$, and any $f$ satisfying $\max\{f(y)-f(x),0\}\leq L d(x,y)$ must also satisfy $\max\{f(x)-f(y),0\}\leq L d(x,y)$ due to the symmetry of $d$. However, example~\ref{funsemiLip} shows that this is not the case for pointwise semi-Lipschitz functions (since $\mathrm{Lip} f(0)=+\infty$).
\end{remark}

\begin{remark}\label{remarkconstantes}\em
For a function $f:(X,d_X)\to (Y,d_Y)$ between quasi-metric spaces, the following inequalities hold,  where $\mathrm{SLip}_{d_1,d_2}$ is computed using the quasi-metric $d_1$ on $X$ and the quasi-metric $d_2$ on $Y$. For any $x\in X$,
\begin{itemize}
    \item $\mathrm{SLip}_{d_X^s, d_Y} f(x)\leq\mathrm{SLip}_{d_X, d_Y} f(x)\leq \mathrm{SLip}_{d_X, d_Y^s} f(x),$

    \item $\mathrm{SLip}_{d_1,d_2} f(x)\leq \mathrm{SLIP}_{d_1,d_2}f,$ for $d_1=\{d_X, d_X^s\}$ and $d_2=\{d_Y, d_Y^s\}$.
\end{itemize}
\end{remark}

In the recent years, there have been several developments regarding the use of metric slopes in \emph{determination theorems}, that is, results guaranteeing that functions sharing the same metric slopes are equal up to a constant (see \cite{DS} for an example of such a result). The recent work \cite{DMS} extended the framework of these determination theorems beyond metric spaces using the notion of \emph{abstract descent moduli}, which generalize the usual definition of metric slope of a function, and which does not rely on an underlying metric on the domain of the function. It is easy to check that, for any quasi-metric space $(X,d)$, the operator 
$$ \begin{array}{cccl}
   T:&\mathbb{R}^X&\longrightarrow &[0,\infty]^X  \\
     &f &\longrightarrow  &\SLip (-f)
\end{array} 
$$
\noindent satisfies the three conditions of Definition~3.1 of \cite{DMS}. In other words, the semi-Lipschitz constant of minus a function is an abstract descent modulus, and therefore, the results of Section~3.2 of \cite{DMS} can be applied to $T[f]=\SLip (-f)$. In particular, Theorem~3.5 states that continuous and coercive functions on any quasi-metric space are determined by their pointwise semi-Lipschitz constants and critical values (see \cite{DMS} for the precise definitions). 

Another determination result for descent moduli was shown in \cite{DLS}, where the authors were able to remove the need for coercivity of the involved functions, at the cost of requiring a certain relation between the descent modulus $T$ and an underlying metric $D$ on the set $X$. We shall see that this relation, called \emph{metric compatibility}, is satisfied by the operator $T[f]=\SLip (-f)$ under a partial symmetry assumption on the quasi-metric space $(X,d)$. We recall the definition of \emph{strong metric compatibility} from \cite{DLS}.

\begin{definition}[Metric compatibility]\label{metriccompatibilty}\em
An abstract descent modulus (see \cite{DMS}) $T$ is said to be metrically compatible with a metric $D$ if for every $\rho>0$ there exists a strictly increasing continuous function $\theta_\rho:[0,\infty)\to [0,\infty)$, with $\theta_\rho(0)=0$ and $\lim_{t\to\infty}\theta_\rho(t)=+\infty$ such that for every $f,g\in \mathrm{dom}(T)$, $x\in \mathrm{dom}(g)$ and $\delta >0$, it holds:

$$ T[f](x)<\delta < T[g](x)\implies \exists z\in \mathrm{dom}(g):\left\{\begin{array}{c}
    \max\{f(x)-f(z),0\}<(1+\rho)  \max\{g(x)-g(z),0\} \\
    \text{ and}\\
    \theta_\rho(\delta)D(x,z)<g(x)-g(z)
    
\end{array}\right. $$ 
\end{definition}

\begin{proposition}[$\SLip (-f)$ is metrically compatible]\label{SLipmetricallycompatible}
Let $(X,d)$ be a uniformly quasi-symmetric quasi-metric space (see forthcoming Definition~\ref{unifquasisym}). Then, the descent modulus ${T[f]=\SLip (-f)}$ is metrically compatible with the symmetrized metric $D=d^s$. 
\end{proposition}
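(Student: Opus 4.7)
The plan is to verify Definition \ref{metriccompatibilty} directly by constructing the witness point $z$ and the modulus $\theta_\rho$. Fix $\rho>0$. The intuition is: the hypothesis $\SLip(-f)(x)<\delta$ means that $f$ cannot descend too fast near $x$ when distances are measured by $d$; the hypothesis $\SLip(-g)(x)>\delta$ produces directions $z$ where $g$ descends at rate strictly greater than $\delta$; and uniform quasi-symmetry (to be introduced in Definition~\ref{unifquasisym}) provides a scale-local, \emph{uniform} quantitative comparison between $d$ and $d^s=D$, of the form $d^s(x,y)\leq c_\rho\,d(x,y)$ whenever $d(x,y)$ is sufficiently small, for some constant $c_\rho\geq 1$ depending only on $\rho$. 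This will determine $\theta_\rho$.

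The first step is to pick $\delta' \in (\SLip(-f)(x),\delta)$ and, from the $\limsup$ definition of $\SLip(-f)(x)$, find $r_1>0$ such that
\[
\max\{f(x)-f(y),0\}\leq \delta'\,d(x,y)\quad\text{whenever } d(x,y)<r_1.
\]
From uniform quasi-symmetry, obtain $r_2>0$ (and the constant $c_\rho$ above) such that $d^s(x,y)\leq c_\rho\,d(x,y)$ whenever $d(x,y)<r_2$. Set $r=\min\{r_1,r_2\}$. Since $\SLip(-g)(x)>\delta$, unwinding the $\limsup$ yields a point $z\in X$ with $0<d(x,z)<r$ and $g(x)-g(z)>\delta\,d(x,z)$; in particular $\max\{g(x)-g(z),0\}=g(x)-g(z)>0$.

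Now define $\theta_\rho(t):=t/c_\rho$, which is strictly increasing, continuous on $[0,\infty)$, satisfies $\theta_\rho(0)=0$ and $\theta_\rho(t)\to+\infty$. The first conclusion of metric compatibility is immediate, and in fact sharper than needed, because
\[
\max\{f(x)-f(z),0\}\leq \delta'\,d(x,z)<\delta\,d(x,z)<g(x)-g(z)<(1+\rho)\max\{g(x)-g(z),0\}.
\]
For the second conclusion, the bound from uniform quasi-symmetry gives
\[
\theta_\rho(\delta)\,D(x,z)=\frac{\delta}{c_\rho}\,d^s(x,z)\leq \delta\,d(x,z)<g(x)-g(z),
\]
which is exactly the required inequality.

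The main obstacle is making sure that uniform quasi-symmetry really yields a constant $c_\rho$ that is \emph{independent of $x$} (this is what distinguishes \emph{uniform} quasi-symmetry from a merely pointwise or local variant), since metric compatibility demands a $\theta_\rho$ valid for all $x\in X$ simultaneously; if the forthcoming Definition~\ref{unifquasisym} only provides a modulus $\omega$ with $d^s(x,y)\leq \omega(d(x,y))$ rather than a linear comparison, the construction of $\theta_\rho$ must be adjusted by inverting $\omega$ (setting $\theta_\rho(\delta)=\delta\,d(x,z)/\omega(d(x,z))$ and verifying this can be made a function of $\delta$ alone on the relevant scale). Apart from this quantitative subtlety, the argument is entirely driven by the two extraction steps (of $z$ and of the radius $r$) and a direct chain of inequalities.
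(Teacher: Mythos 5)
Your proof is correct and follows essentially the same route as the paper: extract a point $z$ with $0<d(x,z)$ small where $g$ descends at rate $>\delta$ while $f$ descends at rate $<\delta$, note $g(x)-g(z)>0$, and use the uniform bound $d^s(x,z)\leq K\,d(x,z)$ from uniform quasi-symmetry to take $\theta_\rho(t)=t/K$. Your closing concern is resolved exactly as you anticipate: Definition~\ref{unifquasisym} gives a constant $K=\sup_{x}\sigma(x)<+\infty$ independent of $x$ (only the radius at which the linear comparison holds may depend on $x$, which is harmless since $z$ is chosen afterwards), and your extra care with the auxiliary $\delta'$ and with taking $c_\rho$ slightly above $K$ tidies up a small infimum subtlety that the paper glosses over.
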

\begin{proof}
Consider functions $f$ and $g$, $\rho>0$, $\delta>0$ and $x\in X$ such that 
$$\SLip (-f)(x) <\delta <\SLip (-g)(x)<+\infty.$$ 

As a consequence of $(X,d)$ being uniformly quasi-symmetric, we have that $d(x_0,x)>0$ for all $x\neq x_0$, so we can write $\SLip h(x)=\limsup_{x\to x_0}\frac{d_u(h(x_0),h(x))}{d(x_0,x)}$ for any function $h$.

By definition of the pointwise semi-Lipschitz constant, there exists $r>0$ small enough such that 

$$ \sup_{y\in B^+(x,r)}\frac{\max\{-f(x)+f(y),0\}}{d(x,y)}<\delta <\sup_{y\in B^+(x,r)}\frac{\max\{-g(x)+g(y),0\}}{d(x,y)}. $$

Moreover, we can choose $z\in B^+(x,r)$ such that 
$$\frac{\max\{f(x)-f(z),0\}}{d(x,z)}<\delta <\frac{\max\{g(x)-g(z),0\}}{d(x,z)},$$
\noindent which readily implies the first condition of Definition~\ref{metriccompatibilty}, and also that $g(x)-g(z)>0$.

Now, since $(X,d)$ is pointwise quasi-symmetric, we have that $D(x,z)\leq K d(x,z)$ whenever $d(x,z)$ is small enough, where ${K:= \sup_{x \in X} \SLip id_{d,D}(x) < +\infty}$ (see Definition~\ref{unifquasisym}). Thus, 
$$\frac{\delta}{K} <\frac{g(x)-g(z)}{D(x,z)},$$
\noindent and therefore the second condition of Definition~\ref{metriccompatibilty} is satisfied for the function $\theta_\rho(\delta)=\tfrac{\delta}{K}$. 
\end{proof}

Proposition~\ref{SLipmetricallycompatible} along with \cite[Theorem 3.6]{DLS} yield that for any uniformly quasi-symmetric quasi-metric space $(X,d)$ which is also \emph{bicomplete} (that is, a quasi-metric space such that $(X,d^s)$ is a complete metric space), continuous and bounded from below functions are determined by the values of ${T[f]=\SLip (-f)}$, along with the corresponding \textit{critical points and asymptotically critical sequences} (see \cite{DLS} for the precise definitions). 

\
For {\bf metric} spaces $(X,d_X)$ and $(Y,d_Y)$, we recall the definition of the space $D(X, Y)$ of pointwise Lipschitz functions, which was studied in \cite{DJ}:  \begin{itemize}
\item[$\diamond$] $D(X,Y) =\{f:X\to Y\, :\, \sup_{x\in X}\mathrm{Lip} f(x)<+\infty\}.$
\end{itemize}
Let us point out that pointwise Lipschitz functions are in fact continuous \cite[Lemma 2.2]{DJ}. In the case that $(Y, d_y)=(\R, \vert \cdot \vert)$ it will be omitted in the notation, and the corresponding space is denoted by $D(X)$.

Now we are going to introduce two analogous spaces of pointwise semi-Lipschitz functions between quasi-metric spaces.

\begin{definition}[Pointwise semi-Lipschitz spaces]\label{DSL}\em
Let $(X,d_X)$ and $(Y,d_Y)$ be quasi-metric spaces. We define the following function spaces:
\begin{itemize}
\item[$\diamond$] $C_{SL}(X, Y)$ is defined as the space of all pointwise semi-Lipschitz functions from $(X,d_X)$ to $(Y,d_Y)$ which are continuous for the forward topology on $X$ and the symmetrized topology in $Y$ (that is, $(X,d_X)$-to-$(Y,d_Y^s)$ continuous.)
\item[$\diamond$] $D_{SL}(X, Y)$ is defined as the space of all functions $f\in C_{SL}(X, Y)$ such that
$$\|\SLip f\|_{\infty}=\sup_{x \in X} \SLip \, f(x)< +\infty.$$
\item[$\diamond$] In the case that $(Y, d_Y)=(\R, d_u)$, the target space will be omitted in the notation, and the corresponding spaces of real-valued functions will be denoted, respectively, by $C_{SL}(X)$ and $D_{SL}(X)$.
\end{itemize}
\end{definition}

We now show the stability of these function spaces by composition.

\begin{lemma}\label{lemmacompos} Let $(X, d_X), (Y, d_Y),$ and $(Z, d_Z)$ be quasi-metric spaces. Then:
\begin{itemize}
\item[(a)] If $f \in C_{SL}(X, Y)$ and $g \in C_{SL}(Y, Z)$, then $g \circ f \in C_{SL}(X, Z)$ and, for every $x\in X$,
$$
\SLip \, g \circ f(x) \leq \SLip \, g (f(x)) \cdot \SLip \, f(x).
$$
\item[(b)] If $f \in D_{SL}(X, Y)$ and $g \in D_{SL}(Y, Z)$, then $g \circ f \in D_{SL}(X, Z)$.
\end{itemize}
\end{lemma}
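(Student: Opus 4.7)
The plan is to establish the pointwise product bound in (a) first, and then to check that the continuity clause of $C_{SL}$ and the full statement (b) follow almost for free. The only nontrivial ingredient is chaining two local inequalities, which requires knowing that $f$ pulls small $d_X$-balls into small $d_Y$-balls; this is exactly the forward-forward continuity recorded in the remark after Definition~\ref{poitwisesemi}.

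For the pointwise bound, I would fix $x_0 \in X$ and write $\alpha := \SLip f(x_0)$, $\beta := \SLip g(f(x_0))$, both finite by hypothesis. Given arbitrary $\alpha' > \alpha$ and $\beta' > \beta$, the definition of the pointwise semi-Lipschitz constant (inequality \eqref{pointslip}) supplies $\delta_1, \delta_2 > 0$ with
$$
d_Y(f(x_0), y) < \delta_1 \implies d_Z(g(f(x_0)), g(y)) \leq \beta' \, d_Y(f(x_0), y),
$$
$$
d_X(x_0, x) < \delta_2 \implies d_Y(f(x_0), f(x)) \leq \alpha' \, d_X(x_0, x).
$$
Forward-forward continuity of $f$ at $x_0$ further produces $\delta_3 > 0$ such that $d_X(x_0,x) < \delta_3$ forces $d_Y(f(x_0), f(x)) < \delta_1$. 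Then for $\delta := \min\{\delta_2, \delta_3\}$ and $d_X(x_0,x) < \delta$, the two displayed implications chain to
$$
d_Z\bigl(g(f(x_0)), g(f(x))\bigr) \leq \beta' \, d_Y(f(x_0), f(x)) \leq \alpha' \beta' \, d_X(x_0, x).
$$
Letting $\alpha' \downarrow \alpha$ and $\beta' \downarrow \beta$ gives $\SLip(g \circ f)(x_0) \leq \alpha\beta$, which is the required pointwise estimate (the isolated-point case is trivial since $\SLip f(x_0) = 0$).

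For the continuity clause of (a), I would observe that since $d_Y^s \geq d_Y$ the $d_Y^s$-topology is finer than the forward $d_Y$-topology, so $f \in C_{SL}(X,Y)$ is in particular continuous from $(X, d_X)$ to $(Y, d_Y)$. Composing with $g : (Y, d_Y) \to (Z, d_Z^s)$, which is continuous by definition of $C_{SL}(Y, Z)$, gives the required $(X,d_X)$-to-$(Z, d_Z^s)$ continuity of $g \circ f$. Part (b) is then immediate from the product bound: taking the supremum over $x \in X$ yields
$$
\|\SLip(g \circ f)\|_\infty \leq \|\SLip g\|_\infty \cdot \|\SLip f\|_\infty < +\infty,
$$
and continuity is inherited from (a), so $g \circ f \in D_{SL}(X, Z)$. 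I do not foresee any real obstacle; the only subtle bookkeeping is insisting on the auxiliary radius $\delta_3$ to place $f(x)$ inside the neighborhood of $f(x_0)$ on which the local estimate for $g$ is valid.
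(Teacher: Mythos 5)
Your proposal is correct and follows essentially the same chaining argument as the paper: combine the local semi-Lipschitz estimates for $g$ at $f(x_0)$ and for $f$ at $x_0$ on a sufficiently small forward ball, the only cosmetic difference being that you invoke forward-forward continuity to produce the auxiliary radius $\delta_3$ where the paper directly takes $\delta'' < \delta/\beta$, and that you make the passage to the infimum over admissible constants explicit. The continuity clause and the deduction of (b) from (a) also match the paper's treatment.
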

\begin{proof}
It is clear that $(b)$ follows from $(a)$, so let us prove $(a)$. We first note that $g \circ f$ is continuous for the forward topology in $X$ and the symmetrized topology in $Z$. Now, consider $x_0 \in X$ and let $y_0 = f(x_0)$. We know that there exist $\alpha > 0$ and $\delta > 0$ such that if $d_Y(y_0, y) < \delta$, then
$$
d_Z(g(y_0), g(y)) \leq \alpha d_Y(y_0, y).
$$
We also know that there exist $\beta > 0$ and $\delta' > 0$ such that if $d_X(x_0, x) < \delta'$, then
$$
d_Y(f(x_0), h
f(x)) \leq \beta d_X(x_0, x).
$$
Choosing $0 < \delta'' < \min\{\delta', \frac{\delta}{\beta}\}$, we obtain that if $d_X(x_0, x) < \delta''$, then $d_Y(h(x_0), h(x)) \leq \beta \cdot \delta' < \delta$. Therefore,
$$
d_Z(g(f(x_0)), g(f(x))) \leq \alpha \beta d_X(x_0, x).
$$
Then $g \circ f$ is pointwise semi-Lipschitz at $x_0$ and furthermore
$$
\SLip \, g \circ f(x_0) \leq \SLip \, g (y_0) \cdot \SLip \, f(x_0).
$$
\end{proof}

\begin{definition}[Pointwise semi-Lipschitz homeomorphism]\em
We say a bijection between quasi-metric spaces ${h:(X, d_X) \to (Y, d_Y)}$ is a $C_{SL}$-{\em homeomorphism} whenever ${h\in C_{SL}(X, Y)}$ and \\${h^{-1}\in C_{SL}(Y, X)}$. In the same way,  $h$ is said to be a  $D_{SL}$-{\em homeomorphism} if $h\in D_{SL}(X, Y)$ and $h^{-1}\in D_{SL}(Y, X)$.
\end{definition}


For a {\bf metric} space $(X, d)$, since every real-valued function on $X$ satisfies $\mathrm{SLip} f(x) \leq \mathrm{Lip} f(x)$, it is clear that $D(X) \subset D_{SL}(X)$. In our next example we  show a  metric space $X$ for which $D_{SL}(X) \neq D(X)$.

\begin{example}\label{funcontDSL}\em
Consider on the interval $X=[0,1]$ the snowflake distance $d(x, y)= \vert x-y \vert^{1/2}$. Select a point $a\in X$, and choose a sequence of different points $(a_n)$ in $X$ converging to $a$, and a sequence of small enough radii $(r_n)$, such that: $0<r_n<\frac{1}{n} d(a_n, a)$, the open balls $B_d(a_n, r_n)$ are pairwise disjoint, and each $x\neq a$ has a neighborhood $V^x$ which meets only a finite number of balls $B_d(a_n, r_n)$.

Note that any Lipschitz function $f:([0,1], \vert \cdot \vert) \to \mathbb R$ satisfies that $\mathrm{Lip} f(x)=0$ for every $x\in X$. Indeed, if
$$
\vert f(x) - f(y)\vert \leq K \vert x-y\vert
$$
Then
$$
\limsup_{y \to x}\frac{\vert f(x) - f(y)\vert}{d(x, y)} \leq \limsup_{y \to x}\frac{K \vert x-y\vert}{\vert x-y\vert^{1/2}} =0.
$$

Now, if for each $n$ we fix some $k_n>0$, we can select a continuous function $f_n:X\to \mathbb R$, which is Lipschitz for the euclidean distance in $X$, such that $0 \geq f_n\geq - k_n d(a_n, a)$ on $X$, $f_n(a_n)= - k_n d(a_n, a)$, and the support of $f_n$ is contained into $B(a_n, r_n)$. Now consider
$$
f=\sum_{n=1}^{\infty} f_n.
$$
The function $g$ is well-defined and continuous on $X\setminus \{a\}$, since the sum is locally finite. If we assume in addition that
$$
\lim_{n\to \infty} k_n d(a_n, a) =0,
$$
we obtain that $f$ is continuous on $X$. Furthermore, we have that $\mathrm{Lip} f(x)=0$ for each $x\neq a$. Thus $\mathrm{SLip} f(x)=\vert \partial f \vert ^{+}(x)=0$ for $x\neq a$. Since $a$ is a local (and global) maximum of $f$, we see that also $\mathrm{SLip} f(a)=\vert \partial f \vert ^{+}(a)=0$.

On the other hand,
$$
\mathrm{Lip} f(a) = \limsup_{n\to \infty} \frac{\vert f(a) - f(a_n)\vert}{d(a_n, a)} = \sup_n k_n.
$$
If we choose a constant sequence $k_n =k$ we obtain an example where
$$
\sup_{x\in X} \mathrm{SLip} f(x) = 0 < k = \sup_{x\in X} \mathrm{Lip} f(x) < +\infty.
$$
On the other hand, if we choose a sequence $(k_n)$ tending to infinity, for example $k_n = d(a_n, a)^{-1/2}$, we obtain an example where $\mathrm{SLip} f(x) = 0$ and  $\mathrm{Lip} f(x)< +\infty$ for every $x\in X$, but
$$
\sup_{x\in X} \mathrm{Lip} f(x) = +\infty.
$$
\end{example}

\begin{remark} \label{flat} \em
Note that an analogous construction  can be carried out on a compact metric space $(X, d)$ provided:

(i) $(X, d)$ contains an accumulation point $a$.

(ii) $(X,d)$ is purely 1-unrectifiable, that is, it  contains no bi-Lipschitz copy of any compact
subset of $\R$ with positive measure.

Recall that a function $f:X \to \R$ on a metric space $X$ is said to be {\em locally flat} if, for every $p \in X$ and every $\varepsilon>0$ there exists $\delta>0$ such that, if $x, y \in B_d(p, \delta)$ then $\vert f(x)-f(y)\vert \leq \varepsilon d(x, y)$. As a consequence of \cite[Theorem A]{AGPP}, every compact and purely 1-unrectifiable metric space $X$ satisfies that the space $\lip(X)$ of  locally flat Lipschitz functions $f:X\to \R$  {\em separates points} of $X$.  Note that $\lip(X)$ is a unital subalgebra of the space $C(X)$ of continuous real functions on $X$, so by the classical Stone-Weierstrass theorem we have that $\lip(X)$ is uniformly dense in $C(X)$. In particular $\lip(X)$ separates disjoint closed subsets of $X$. It is easily seen that this allows a construction similar to the one in Example \ref{funcontDSL}.
\end{remark}

Our next objective is to provide geometric conditions on a metric space $(X, d)$ under which $D(X)=D_{SL}(X)$.

Recall that the {\em length} of a continuous curve $\gamma :[a, b]\to X$ is defined as
$$
\ell (\gamma) = \sup \left\{ \sum_{i=1}^m d(\gamma (t_i), \gamma (t_{i-1}) \right\},
$$
where the supremum is taken among all partitions $\{ a=t_0<t_1< \dots t_m=b \}$ of the interval $[a, b]$. The curve $\gamma$ is {\em rectifiable} if
$\ell(\gamma) < + \infty$.

The space $(X,d)$ is said to be  {\it quasi-convex} if there exists a constant $K>0$ such that, for every $x, y \in X$, there is a curve $\gamma$ in $X$ from $x$ to $y$ whose length satisfies $\ell(\gamma) \leq K\, d(x,y)$.

\begin{definition}[uniformly locally radially quasi-convex]\label{defuniflocallytadiallyqc}\em
A metric space $(X,d)$ is called \em uniformly locally radially quasi-convex \em if there exists a constant $K>0$ such that for every $x_0\in X$ there exists a neighborhood $U_{x_0}$ of $x_0$ such that for every $y\in U_{x_0}$ there exists a rectifiable curve $\gamma$ in $U_x$ connecting $x$ and $y$ and such that $\ell(\gamma)\leq Kd(x,y)$.
\end{definition}

We first need the following technical lemma, which is analogous to the one presented in \cite[Lemma 2.3]{DJ} for functions with bounded pointwise Lipschitz constant.

\begin{lemma}{\bf(Length lemma)}\label{lemmacurva}
Let $(X,d)$ be a metric space and let $f\in D_{SL}(X)$.  Let $x,y\in X$ and suppose there exists a rectifiable curve $\gamma:[a,b]\to X$ such that $\gamma(a)=x$ and $\gamma(b)=y$. Then,
$$ f(y)-f(x)\leq \|\mathrm{SLip} f\|_\infty \ell(\gamma).$$
\end{lemma}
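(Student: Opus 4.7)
The plan is to use a continuity-and-supremum argument, running along the curve and showing that the one-sided inequality propagates from the starting point. Set $L=\|\SLip f\|_\infty$ and, for $t\in[a,b]$, write $s(t)=\ell(\gamma|_{[a,t]})$ for the arc-length function. Two preliminary facts that I will invoke are: (i) since $\gamma$ is continuous and rectifiable, $s$ is continuous and non-decreasing on $[a,b]$; and (ii) $f\circ\gamma$ is continuous, because a function in $C_{SL}(X)$ is by definition continuous from the metric topology of $X$ to the symmetrized topology on $\R$, which is the usual topology.

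Fix $\varepsilon>0$ and define
\[
\psi(t)=f(\gamma(t))-f(\gamma(a))-(L+\varepsilon)s(t),\qquad t\in[a,b],
\]
together with $E_\varepsilon=\{t\in[a,b]:\psi(t)\leq\varepsilon\}$. Then $\psi$ is continuous, $a\in E_\varepsilon$, so $E_\varepsilon$ is a non-empty closed subset of $[a,b]$. Let $t^\ast=\sup E_\varepsilon$; by closedness $t^\ast\in E_\varepsilon$. The goal is to rule out $t^\ast<b$.

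Assuming $t^\ast<b$, I apply the pointwise semi-Lipschitz hypothesis at $x_0=\gamma(t^\ast)$: since $\SLip f(x_0)\leq L$, there exists $\delta>0$ with
\[
d(x_0,x)<\delta\ \Longrightarrow\ f(x)-f(x_0)\leq (L+\varepsilon)\,d(x_0,x).
\]
By continuity of $\gamma$, there is $\tau>0$ such that $d(\gamma(t^\ast),\gamma(t))<\delta$ for all $t\in[t^\ast,t^\ast+\tau]\cap[a,b]$. Combining the above with the basic inequality $d(\gamma(t^\ast),\gamma(t))\leq s(t)-s(t^\ast)$ (valid because chord length is bounded by arc length), and then adding the inequality $\psi(t^\ast)\leq\varepsilon$ coming from $t^\ast\in E_\varepsilon$, yields $\psi(t)\leq\varepsilon$ for every such $t$. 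Choosing $t=\min\{t^\ast+\tau,b\}>t^\ast$ contradicts the definition of $t^\ast$. Hence $t^\ast=b$, so $\psi(b)\leq\varepsilon$, i.e.\ $f(\gamma(b))-f(\gamma(a))\leq (L+\varepsilon)\ell(\gamma)+\varepsilon$. Letting $\varepsilon\to 0^+$ gives the desired bound.

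The main obstacle, and the only non-formal step, is the propagation at $t^\ast$: I must combine the \emph{one-sided} bound coming from $d_u$ (which only controls $f(x)-f(x_0)$, not $|f(x)-f(x_0)|$) with the comparison $d(\gamma(t^\ast),\gamma(t))\leq s(t)-s(t^\ast)$. This is exactly where the asymmetric character of $\SLip$ is used and where the argument differs in spirit from the analogous Lemma 2.3 in \cite{DJ} for $\Lip f$: here no absolute values appear, and the inequality is genuinely directional, so the argument naturally produces a bound on $f(y)-f(x)$ rather than $|f(y)-f(x)|$.
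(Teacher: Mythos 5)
Your proof is correct and is essentially the paper's own argument: both fix $\varepsilon>0$, consider the set of parameters $t$ for which the increment of $f$ along $\gamma|_{[a,t]}$ is controlled by $(L+\varepsilon)\,\ell(\gamma|_{[a,t]})$, show by continuity of $f\circ\gamma$ and of the arc-length function that this set contains its supremum $t^\ast$, and use the pointwise semi-Lipschitz bound at $\gamma(t^\ast)$ together with $d(\gamma(t^\ast),\gamma(t))\leq s(t)-s(t^\ast)$ to push past $t^\ast$, forcing $t^\ast=b$. The only difference is cosmetic: you bound $f(\gamma(t))-f(\gamma(a))$, which gives the stated inequality $f(y)-f(x)\leq\|\SLip f\|_\infty\,\ell(\gamma)$ directly, whereas the paper's proof propagates the bound on $f(\gamma(a))-f(\gamma(t))$, which is equivalent here by reversing the curve since $d$ is symmetric.
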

\begin{proof}
Let $K=\|\mathrm{SLip} f\|_\infty<+\infty$. For $\varepsilon>0$, let us denote $K'=K+\varepsilon$. Since ${\mathrm{SLip}~f(\gamma(a))<K'}$, there exists $\delta>0$ such that, whenever $d(\gamma(a),x)<\delta$, we have that
$$f(\gamma(a))-f(x)\leq K'd(\gamma(a),x). $$
By continuity of $\gamma$, there exists $t^*\in (a,b]$ such that $d(\gamma(a),\gamma(t^*))<\delta$, and therefore
\begin{equation*}
  f(\gamma(a))-f(\gamma(t^*))\leq K'd(\gamma(a),\gamma(t^*))\leq K'\ell(\gamma|_{[a,t^*]}).
\end{equation*}
Let us consider the set
$$A=\{t\in (a,b]~:~f(\gamma(a))-f(\gamma(t))\leq K'\ell(\gamma|_{[a,t]})\},$$
\noindent which is clearly non empty (as $t^*\in A$) and bounded above, so we can consider $s=\sup(A)$. Let us check that $s$ belongs to $A$. By definition of $s$, there exists a sequence $(t_n)\subset A$ such that $(t_n)\to s$ and $f(\gamma(a))-f(\gamma(t_n))\leq K'\ell(\gamma|_{[a,t_n]})$. By continuity of $f$, we conclude that $f(\gamma(a))-f(\gamma(s))\leq K'\ell(\gamma|_{[a,s]})$.
Next, we shall prove that $s=b$. If this were not the case, we would have $a<s<b$, and since $\mathrm{SLip}~f(\gamma(s))<K'$, we can take $t^*\in(s,b]$ satisfying
$$f(\gamma(s))-f(\gamma(t^*))\leq K'\ell(\gamma|_{[s,t^*]}).$$
Then,
\begin{align*}
    f(\gamma(a))-f(\gamma(t^*))&=f(\gamma(a))-f(\gamma(s))+f(\gamma(s))-f(\gamma(t^*))\\
    &\leq K'\ell(\gamma|_{[a,s]})+K'\ell(\gamma|_{[s,t^*]})\\
    &= K'\ell(\gamma|_{[a,t^*]}),
\end{align*}
\noindent which implies $t^*\in A$, contradicting the fact that $s=\sup (A)$. Having proved that $s=b$, the fact that $s\in A$ yields the desired result.
\end{proof}

A direct consequence of Lemma \ref{lemmacurva} and  \cite[Lemma 2.3]{DJ} is the following result. Here $\LIP(X)$ denotes, as usual, the space of all real-valued Lipschitz functions on a metric space $X$.

\begin{corollary}\label{quasiconvex}
Let $(X,d)$ be a metric space.
\begin{enumerate}
\item If $(X,d)$ is a quasiconvex space, then $\LIP(X)=D(X)=D_{SL}(X)$.
\item If $(X,d)$ is a uniformly locally radially quasiconvex space, then $D(X)=D_{SL}(X)$.
\end{enumerate}
\end{corollary}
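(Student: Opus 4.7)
The plan is to combine Lemma \ref{lemmacurva} with a simple curve-reversal trick. I would first record the automatic inclusions $\LIP(X)\subseteq D(X)\subseteq D_{SL}(X)$, valid for any metric space: the first is immediate from the definition of $\Lip$, and the second is the pointwise comparison $\SLip f(x)\le\Lip f(x)$ already noted before Example \ref{funcontDSL}. Thus the only content of the corollary is the reverse inclusion in each item.

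For (1), I would fix $f\in D_{SL}(X)$, set $K=\|\SLip f\|_\infty$, and take arbitrary $x,y\in X$. Quasiconvexity furnishes a rectifiable curve $\gamma\colon[a,b]\to X$ from $x$ to $y$ with $\ell(\gamma)\le C\,d(x,y)$, where $C$ is the quasiconvexity constant. Lemma \ref{lemmacurva} applied directly yields $f(y)-f(x)\le K\,\ell(\gamma)\le KC\,d(x,y)$. Because $d$ is symmetric, the reversed curve $\tilde\gamma(t):=\gamma(a+b-t)$ joins $y$ to $x$ and has the same length, so a second application of Lemma \ref{lemmacurva} gives $f(x)-f(y)\le KC\,d(x,y)$. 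Combining, $|f(x)-f(y)|\le KC\,d(x,y)$, so $f\in\LIP(X)$.

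For (2), I would localize the same argument. Fixing $x_0\in X$, let $U_{x_0}$ and $C$ be as in Definition \ref{defuniflocallytadiallyqc}. Then for each $y\in U_{x_0}$ the curve furnished by the definition lies inside $U_{x_0}$ and satisfies $\ell(\gamma)\le C\,d(x_0,y)$, so the two-sided estimate from the previous paragraph yields $|f(x_0)-f(y)|\le KC\,d(x_0,y)$ for every $y\in U_{x_0}$. Taking the $\limsup$ as $y\to x_0$ gives $\Lip f(x_0)\le KC$, and since this bound is uniform in $x_0$, we conclude $f\in D(X)$.

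I do not anticipate a genuine obstacle: the analytic work is already packaged in Lemma \ref{lemmacurva}, and the only new observation is that the symmetry of a metric allows one to reverse curves, upgrading the inherently one-sided bound of the length lemma to a two-sided Lipschitz estimate. This curve-reversal step is precisely where the argument would break down for a true quasi-metric, and it explains why $D_{SL}(X)$ collapses onto $D(X)$ (or $\LIP(X)$) under the stated geometric assumptions.
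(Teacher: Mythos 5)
Your proof is correct and follows the route the paper intends: the paper derives the corollary as a ``direct consequence of Lemma \ref{lemmacurva} and \cite[Lemma 2.3]{DJ}'', and your argument simply makes explicit the curve-reversal step that upgrades the one-sided estimate of Lemma \ref{lemmacurva} to a two-sided Lipschitz bound. In fact your version is marginally more self-contained, since the reversal trick applied to Lemma \ref{lemmacurva} together with the trivial inclusions $\LIP(X)\subseteq D(X)\subseteq D_{SL}(X)$ makes the appeal to \cite[Lemma 2.3]{DJ} unnecessary.
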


The last lemma of this section will not be used in the rest of the article but might be of independent interest. Recall that a non-negative Borel function $g$ on $X$ is an \em upper gradient \em of $f:X\to\R$, if
$$
|f(\gamma(a))-f(\gamma(b))|\leq\int_{\gamma}g,
$$
for every rectifiable curve $\gamma:[a,b]\rightarrow X$. For a Borel measurable function $g:\gamma([a,b])\rightarrow[0,\infty]$, we define $
\int_{\gamma}g:=\int_0^{\ell(\gamma)}g(\widetilde{\gamma}(t))dt$,
where $\widetilde{\gamma}:[0,\ell(\gamma)]\rightarrow X$ is the arc-length parametrization of $\gamma$.

For a general function $f:X\to\R$, the metric slope $|\partial f|^-$ is only a {\em weak} upper gradient for $f$ (see \cite[Definition 1.2.2]{AGS} and \cite[Theorem 1.2.5]{AGS}). The following lemma shows that, under additional geometrical assumptions on $X$, $|\partial f|^-$ is also an upper gradient.

\begin{lemma}\label{slopegradient}
Let $(X,d)$ be a uniformly locally radially quasiconvex space. If $f\in D_{SL}(X)$, then $|\partial f|^+$ and $|\partial f|^-$ are upper gradients of $f$.
\end{lemma}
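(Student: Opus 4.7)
My plan is to reduce the upper-gradient inequality to a one-dimensional absolute-continuity argument via arc-length reparametrization. First, since $(X,d)$ is uniformly locally radially quasiconvex, Corollary~\ref{quasiconvex} gives $D_{SL}(X)=D(X)$, so $\|\Lip f\|_\infty<\infty$ and both $|\partial f|^+$ and $|\partial f|^-$ are bounded above by this constant. I then observe that these slopes are Borel: for each $r>0$, the function $g_r^+(x):=\sup_{0<d(x,y)<r}(f(y)-f(x))^+/d(x,y)$ is lower semicontinuous (a routine check using continuity of $f$ and $d$), so $|\partial f|^+=\inf_{n\in\N} g^+_{1/n}$ is Borel as a countable infimum of l.s.c.\ functions; the same holds for $|\partial f|^-$. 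This guarantees the path integrals $\int_\gamma |\partial f|^{+}$ and $\int_\gamma |\partial f|^{-}$ are well defined.

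Next, using the symmetry of $d$, any rectifiable curve $\gamma:[a,b]\to X$ can be reversed to a curve $\gamma^-$ of the same length with $\int_{\gamma^-} g = \int_\gamma g$ for every Borel $g\geq 0$. This reduces the upper-gradient statements to proving, for every rectifiable $\gamma$, the one-sided inequalities
\begin{equation*}
f(\gamma(b))-f(\gamma(a))\leq \int_\gamma |\partial f|^+ \quad \text{and}\quad f(\gamma(a))-f(\gamma(b))\leq \int_\gamma |\partial f|^-,
\end{equation*}
since applying these to $\gamma^-$ yields the reverse inequalities and hence the full absolute-value bounds defining an upper gradient.

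To prove the first inequality, I would reparametrize $\gamma$ by arc length as $\widetilde{\gamma}:[0,L]\to X$ (so $d(\widetilde{\gamma}(s),\widetilde{\gamma}(t))\leq |t-s|$) and apply Lemma~\ref{lemmacurva} to the restrictions $\widetilde{\gamma}|_{[s,t]}$ and to their reverses, concluding that $|f(\widetilde{\gamma}(t))-f(\widetilde{\gamma}(s))|\leq \|\SLip f\|_\infty|t-s|$. Thus $\phi:=f\circ\widetilde{\gamma}$ is Lipschitz on $[0,L]$, and by the classical Lebesgue differentiation theorem $\phi$ is differentiable at a.e.\ $t$ with $\phi(L)-\phi(0)=\int_0^L \phi'(t)\,dt$.

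The core step will be the pointwise bound $(\phi'(t))^+\leq |\partial f|^+(\widetilde{\gamma}(t))$ at a.e.\ $t\in[0,L]$. When $\phi'(t)>0$, for small $h>0$ one has $f(\widetilde{\gamma}(t+h))>f(\widetilde{\gamma}(t))$, and combined with $d(\widetilde{\gamma}(t+h),\widetilde{\gamma}(t))\leq h$ this gives
\begin{equation*}
\frac{\phi(t+h)-\phi(t)}{h}\leq \frac{(f(\widetilde{\gamma}(t+h))-f(\widetilde{\gamma}(t)))^+}{d(\widetilde{\gamma}(t+h),\widetilde{\gamma}(t))};
\end{equation*}
letting $h\to 0^+$ yields $\phi'(t)\leq |\partial f|^+(\widetilde{\gamma}(t))$. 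A symmetric backward argument establishes $(\phi'(t))^-\leq |\partial f|^-(\widetilde{\gamma}(t))$. Integration then produces
\begin{equation*}
f(\gamma(b))-f(\gamma(a))=\int_0^L \phi'(t)\,dt\leq \int_0^L (\phi'(t))^+\,dt\leq \int_\gamma |\partial f|^+,
\end{equation*}
and analogously $f(\gamma(a))-f(\gamma(b))\leq \int_\gamma |\partial f|^-$. The hard part will be the pointwise comparison between the classical one-sided derivative of $\phi$ and the asymmetric slopes at $\widetilde{\gamma}(t)$, which crucially relies on the 1-Lipschitzness of arc-length parametrization so that difference quotients in $t$ control those in $d$; everything else is then a matter of Lebesgue's theorem and the curve-reversal reduction.
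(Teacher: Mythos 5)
Your proof is correct and follows essentially the same route as the paper's: reparametrize by arc length, show $f\circ\widetilde{\gamma}$ is Lipschitz on $[0,L]$, differentiate a.e., compare the derivative pointwise with the slope of $f$ along the curve, and integrate. The only differences are cosmetic: the paper invokes Stepanov's theorem together with Lemma~\ref{differentiable} where you use Lemma~\ref{lemmacurva} plus a direct difference-quotient estimate, and you additionally spell out the Borel measurability of the slopes and the curve-reversal step, both of which the paper leaves implicit.
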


\begin{proof}
Let $\gamma:[a,b]\rightarrow X$ be a rectifiable curve in X, parametrized by arc-length, and $f\in D_{SL}(X)$. In particular, $\gamma$ is $1$-Lipschitz (see \cite[Theorem 3.2]{Ha}). By Corollary \ref{quasiconvex}, $f\in D(X)$ and so $f\circ\gamma\in D([a,b])$.  By Stepanov's Differentiability Theorem, $f\circ\gamma$ is differentiable a.e. and, by Lemma \ref{differentiable}, $|(f\circ\gamma)'(t)|=|\partial (f\circ\gamma)|^+(t)=|\partial (f\circ\gamma)|^-(t)$ at every point $t\in [a,b]$  where $f\circ\gamma$ is
differentiable. Now, we deduce that
 $$
 |f(\gamma(a))-f(\gamma(b))|\leq\Big|\int_a^b(f\circ\gamma)^{'}(t)dt\Big|\leq\int_a^b|\partial (f\circ\gamma)|^+(t)\,dt=\int_a^b|\partial f|^+(\gamma(t))\,dt.
 $$
\end{proof}

\subsection{Pointwise symmetry of quasi-metric spaces}


Dealing with quasi-metric spaces, it is often useful to have some kind of partial symmetry, and in particular we will need this in order to obtain our Banach-Stone type results.  Given a quasi-metric space $(X, d)$, the {\em reverse quasi-metric} is defined by $\bar d (x, y):= d(y, x)$. Of course, not every quasi-metric space exhibits the same behavior with respect to its reverse quasi-metric. As a trivial example, we have metric spaces, where $d$ and $\bar d$ coincide. The opposite phenomenom occurs on $(\mathbb{R},d_u)$ (see \eqref{Rdu}), where $d_u$ and $\bar{d_u}$ are never comparable, as ${\bar{d_u}}(x,y)=0$ whenever $d_u(x,y)>0$. Intermediate examples can be constructed by taking $\alpha \in (0,\infty)$, and defining following the quasi-metric on $\mathbb{R}$
$$
\rho_\alpha(x,y)=\begin{cases}
\begin{array}{ccc}
&\alpha (y-x)&\text{ if }y\geq x \\
&x-y&\text{ if }x\geq y
\end{array}
\end{cases}
$$
\noindent so that $\rho_\alpha$ and $\bar{\rho_\alpha}$ are somewhat ``equivalent''.

The following notion, which quantifies how asymmetric a quasi-metric space is, was introduced independently by Shen and Zhao in \cite{SZ} and by Bachir and Flores in \cite{BF}.

\begin{definition}[Index of symmetry]

\em Let $(X,d)$ be a quasi-metric space. The \em index of symmetry \em of $(X,d)$ is defined by
\[
c_d(X)=\inf_{d(x,y)>0}\frac{d(y,x)}{d(x,y)}\in[0,1].
\]
\end{definition}

Clearly, the class of quasi-metric spaces with index of symmetry $1$ is exactly the class of metric spaces. It is also easy to check that $c_{d_u}(\mathbb{R})=0$ and that $c_{\rho_\alpha}(\mathbb{R})=\min\{\alpha,\alpha^{-1}\}$. In other words, the smaller $c_d(X)$ is, the {\em less symmetric} the quasi-metric space is.

If $(\mathcal{X},d_\mathcal{X})$ is a connected Finsler manifold, then every point $p\in \mathcal{X}$ has a neighborhood $U_p$ such that the index of symmetry of the subspace $(U_p,d_\mathcal{X}|_{U_p})$ is strictly positive. We refer to \cite{BCS} for details. It follows that the index of symmetry of every compact and connected Finsler manifold is strictly positive. This does not necessarily hold in the non-compact case, as we can see in the following example. We refer to  \cite{DJV} for undefined terms and details needed in the next example.

\begin{example}[Finsler manifold with index of symmetry $0$]\label{finslerindex0} \em
Consider $\mathbb{R}$, endowed with the Finsler structure $F_{(x, v)} = \vert v \vert - d \phi(x)(v)$, where $\phi : \mathbb R \to \mathbb{R}$ is given by
$$
\phi (x) := \int_0^x \frac{t^2}{1+t^2} \, dt.
$$
Note that $(\mathbb{R}, F)$ is a Randers space, since $\vert \phi'(x) \vert < 1$ for every $x\in \mathbb{R}$. It is easy to see that the associated Finsler distance is $d_{F} (x, x')= \vert x-x'\vert + \phi (x) - \phi(x')$. We now check that $c_{d_F}(\mathbb{R})=0$. First consider the points $x$ and $x'=x+1$. Using the definitions of $\phi$ and $d_F$, we can compute
$$ c_{d_F}(\mathbb{R})\leq \frac{d_F(x,x+1)}{d_F(x+1,x)}=\frac{\arctan(x+1)-\arctan(x)}{2+\arctan(x)-\arctan(x+1)},$$
\noindent which converges to $0$ when $x\to +\infty$.
\end{example}

In the work \cite{OZ} (as well as in \cite{SZ}) the quantity $\lambda_d(X)=(c_d(X))^{-1}\in [1, +\infty]$ is used instead, under the name \em reversibility \em of $(X,d)$. Notice that, for each $x,y\in X$,
\[
c_d(X) d(x,y)\leq d(y,x)\leq\lambda_d(X) d (x,y).
\]
So, if $c_d(X)>0$, then $id: (X, d) \to (X, \bar{d})$ is bi-semi-Lipschitz.

\begin{definition}[Pointwise index of symmetry]\label{pointwisequasisymm}\em
Let $(X,d)$ be a quasi-metric space. We say that $(X,d)$ is {\it quasi-symmetric} at $x_0\in X$ if the identity map $id: (X, d) \to (X, \bar{d})$ is pointwise semi-Lipschitz at $x_0$. In this case, the {\it pointwise index of symmetry} of $(X,d)$ at $x_0$ is defined by
$$
\sigma(x_0):= \SLip \text{id}\,(x_0).
$$
Observe that if $x_0$ is an isolated point then $\sigma(x_0)=0$. Furthermore, if $x_0$ is non-isolated and $d(x_0,x)>0$ for all $x\in X\setminus \{x_0\}$, then
$$
\sigma (x_0) = \limsup_{x \to x_0} \frac{ d(x, x_0)}{d(x_0, x)}.
$$
We say that $(X,d)$ is {\it pointwise quasi-symmetric} if $\sigma(x_0)<+\infty$ for each $x_0\in X$.
\end{definition}

Of course, every Finsler manifold is pointwise quasi-symmetric since, in this case, each point has a neighborhood where the forward and backward distances are bi-semi-Lipschitz equivalent (see \cite[Lemma 6.2.1]{BCS}). However, there are other examples of pointwise quasi-symmetric spaces where this local bi-semi-Lipschitz equivalence does not hold. We describe one in the next example.

\begin{example}\em
For each $n\in \N$, let us consider the quasi-metric space $(S_n,d_n)$ where $S_n=[0,\infty)$ and
$$
d_n (p,q)=\left\{\begin {array}{ll} n(q-p)&\text{if}\,\,p\leq q\\[10pt]
p-q&\text{if}\,\,q\leq p.
\end{array}
\right.
$$
Notice that, for each $p\geq 0$, $d_n(p,0)=p$ whereas $d_n(0,p)=np$.

We define the quasi-metric space $(X,d)$ as follows. Let  $X$ be the disjoint union of the family $\{(S_n,d_n):n\in\N\}$, that is
$$
X=\bigsqcup_n S_n,
$$
where we identify the origin of every $S_n$ to a single point, which we still denote by $0$, and
$$
d(p,q)=\left\{\begin {array}{ll} d_n(p,q)&\text{if}\,\,p,q\in S_n\\[10pt]
d_n(p,0)+d_m(0,q)&\text{if} \,\, p\in S_n, q\in S_m , \, \,  n\neq m .
\end{array}
\right.
$$
Recall that the elements of the disjoint union are ordered pairs $(p,n)=p_n$, where $p\in S_n$. Here $n$ serves as an auxiliary index that indicates which $S_n$ the element $p$ comes from.
Notice that, for each $n\in\N$,
\[
\frac{ d(p_n, 0)}{d(0, p_n)}=\frac{p_n}{np_n}=\frac{1}{n}
\]
so
\[
\sigma(0)=\limsup_{x \to 0} \frac{ d(x, 0)}{d(0, x)}\leq 1.
\]
Also, if $0<p_n<q_n$, then $\frac{d(q_n,p_n )}{d(p_n,q_n)}=\frac{q_n-p_n}{n(q_n-p_n)}=\frac{1}{n}$. On the other hand, if $0<q_n<p_n$, then ${\frac{d(q_n,p_n )}{d(p_n,q_n)}=\frac{n(p_n-q_n)}{p_n-q_n}=n}$ and so $\sigma(p_n)<\infty$ for each $n\in\N$. Therefore $c_d(X)=0$ but $(X,d)$ is pointwise quasi-symmetric. In particular, $id: (X, d) \to (X, \bar{d})$ is pointwise semi-Lipschitz but $id: (X, \bar{d}) \to (X, d)$ is not even continuous (since forward and backward topology in $X$ are different).
Indeed, consider the sequence $\{x_k\}_k$ where $x_k=\frac{1}{k}$ in $S_k$. One has that $\bar{d}(0,x_k)=d(x_k,0)=\frac{1}{k}\to 0$ when $k\to\infty$ whereas $d(0,x_k)=kx_k=1$. Therefore, for each $r>0$, $\sup_{x\in B(0,r)}\sigma(x)=\infty$ , as opposed to the Finsler case.
\end{example}

\begin{remark}\em It is easy to see that the identity $id: (X, d) \to (X, \bar{d})$ is pointwise semi-Lipschitz at a point $x_0$ if and only if the identity $id: (X, d) \to (X, d^s)$ is pointwise semi-Lipschitz at $x_0$.
\end{remark}

Now we characterize the "pointwise quasi-symmetry" of the space:

\begin{proposition}\label{propequividentity}
Let $(X, d)$ be a quasi-metric space. The following statements are equivalent:
\begin{itemize}
\item[(a)] $(X, d)$ is pointwise quasi-symmetric.
\item[(b)] The identity map $id: (X, d) \to (X, d^s)$ is a $C_{SL}$-homeomorphism.
\end{itemize}
\end{proposition}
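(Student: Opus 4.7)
The plan is to unwind the two sides of the equivalence against the definitions and lean on the remark immediately preceding the proposition, which identifies pointwise semi-Lipschitz behaviour of $id:(X,d)\to(X,\bar d)$ with that of $id:(X,d)\to(X,d^s)$. In fact, writing $d^s(x_0,x)=\max\{d(x_0,x),d(x,x_0)\}$, one has, for any non-isolated $x_0$,
\[
\limsup_{x\to x_0}\frac{d^s(x_0,x)}{d(x_0,x)}=\max\Bigl\{1,\,\limsup_{x\to x_0}\frac{d(x,x_0)}{d(x_0,x)}\Bigr\}=\max\{1,\sigma(x_0)\},
\]
so $\SLip\, id_{d,d^s}(x_0)=\max\{1,\sigma(x_0)\}$, which is finite if and only if $\sigma(x_0)<+\infty$.

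For the implication (a)$\Rightarrow$(b), I would first use the above identity to obtain that, assuming pointwise quasi-symmetry, $id:(X,d)\to(X,d^s)$ is pointwise semi-Lipschitz at every $x_0\in X$. The continuity clause in the definition of $C_{SL}$ asks for continuity from the forward topology of $(X,d)$ to the symmetrized topology of $(X,d^s)$, but since $d^s$ is already symmetric the latter just coincides with the $d^s$-topology; so the forward-to-forward continuity that comes for free from pointwise semi-Lipschitz\-ness (the remark just after Definition~\ref{poitwisesemi}) is exactly what is required. Hence $id\in C_{SL}((X,d),(X,d^s))$. For the inverse $id:(X,d^s)\to(X,d)$, the bound $d(x,y)\leq d^s(x,y)$ forces $\SLip\, id_{d^s,d}(x_0)\leq 1$ at every $x_0$, and continuity from $(X,d^s)$ to the symmetrized topology of $(X,d)$, i.e.\ again to $(X,d^s)$, is trivial; so this branch is automatically in $C_{SL}$, irrespective of any symmetry hypothesis.

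For (b)$\Rightarrow$(a), the assumption that $id:(X,d)\to(X,d^s)$ is a $C_{SL}$-homeomorphism already contains, as one of its clauses, that $id:(X,d)\to(X,d^s)$ is pointwise semi-Lipschitz at every $x_0$. Invoking the preceding remark once more (or, equivalently, the explicit computation in the first paragraph), this is equivalent to $\sigma(x_0)<+\infty$ for every $x_0$, which is precisely pointwise quasi-symmetry.

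There is no real obstacle: the content of the proposition is essentially a translation of the definition of $\sigma$ into the language of $C_{SL}$, and the only point requiring a moment's care is keeping track of which quasi-metric plays the role of source and which plays the role of target in each appearance of $C_{SL}$, so that the ``forward topology vs.\ symmetrized topology'' requirement is checked on the correct side. Once the trivial semi-Lipschitz bound for $id:(X,d^s)\to(X,d)$ is noted, the whole argument reduces to the identity $\SLip\, id_{d,d^s}(x_0)=\max\{1,\sigma(x_0)\}$ established at the outset.
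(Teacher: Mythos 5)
Your proposal is correct and follows essentially the same route as the paper: bound $\SLip\, id_{d,d^s}(x_0)$ by $\max\{1,\sigma(x_0)\}$ (the paper does this with the $\alpha$--$\delta$ definition rather than the $\limsup$ formula, which sidesteps the implicit requirement $d(x_0,x)>0$ for $x\neq x_0$ needed to write $\sigma$ as a $\limsup$), observe that $\SLip\, id_{d^s,d}\leq 1$ trivially, and note that the continuity clauses are automatic. The converse is, as you say, just an unpacking of the definitions.
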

\begin{proof}
Assume $(X,d)$ is pointwise quasi-symmetric and fix $x_0\in X$. Let us denote by $J$ the identity mapping ${id: (X, d) \to (X, d^s)}$, and let us first check that $J$ is continuous at $x_0$. We know there exist $\alpha \geq 0$ and $\delta>0$ such that, if $d(x_0, x)<\delta$ then $d(x, x_0)\leq \alpha \, d(x_0, x)$. If $(x_n)$ is a sequence in $(X, d)$ converging to $x_0$ then $d(x_0, x_n)$ converges to $0$ and so $\bar{d} (x_0, x_n) =d(x_n, x_0)$ also converges to $0$. Therefore, $d^s(x_0, x_n)$ converges to $0$. On the other hand, if $d(x_0, x)<\delta$ then $d^s(x_0, x)\leq \max\{1,  \alpha\} \, d(x_0, x)$. Therefore $\SLip  J (x_0)\leq \max\{1,  \alpha\}$,  and so $J: (X, d) \to (X, d^s)$ is pointwise semi-Lipschitz. On the other hand, it is clear that $\SLip J^{-1}(x_0) \leq 1$. In this way we  obtain that $id: (X, d) \to (X, d^s)$ is a $C_{SL}$-homeomorphism. The converse is clear.
\end{proof}

\begin{remark}\em
Note that, if the quasi-metric spaces  $(X, d_X)$ and $(Y, d_Y)$  are pointwise quasi-symmetric, every pointwise semi-Lipschitz function $f: (X, d_X) \to (Y, d_Y)$ is continuous for the symmetrized metrics $d_X^s$ and $d_Y^s$.
\end{remark}

\begin{corollary}
Suppose that the quasi-metric space $(X, d)$ is pointwise quasi-symmetric. If $f:(X, d^s) \to (\R, \vert \cdot \vert)$ is Lipschitz, then $f:(X, d) \to (\R, d_u)$ is pointwise semi-Lipschitz.
\end{corollary}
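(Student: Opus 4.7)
The plan is to obtain the conclusion by a short composition/bounding argument, using only the two ingredients we already have at hand: $f$ is Lipschitz from $(X,d^s)$ into $(\R,|\cdot|)$, and the identity $id:(X,d)\to(X,d^s)$ is pointwise semi-Lipschitz by Proposition~\ref{propequividentity} (together with the preceding remark that identifies pointwise quasi-symmetry with the pointwise semi-Lipschitz behavior of $id:(X,d)\to(X,d^s)$).

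First I would fix an arbitrary $x_0 \in X$. Using pointwise quasi-symmetry at $x_0$, I extract constants $\alpha\geq 0$ and $\delta>0$ such that $d(x_0,x)<\delta$ implies $d(x,x_0)\leq \alpha\, d(x_0,x)$, which yields
\[
d^{s}(x_0,x)=\max\{d(x_0,x),d(x,x_0)\}\leq \max\{1,\alpha\}\,d(x_0,x).
\]
Next, denoting by $L$ the Lipschitz constant of $f:(X,d^s)\to(\R,|\cdot|)$, I would use the elementary inequality
\[
d_u(f(x_0),f(x))=\max\{f(x)-f(x_0),0\}\le |f(x)-f(x_0)|\le L\,d^{s}(x_0,x).
\]
Chaining these bounds gives $d_u(f(x_0),f(x))\leq L\max\{1,\alpha\}\,d(x_0,x)$ whenever $d(x_0,x)<\delta$, which is exactly the pointwise semi-Lipschitz condition for $f:(X,d)\to(\R,d_u)$ at $x_0$, with $\SLip f(x_0)\leq L\max\{1,\sigma(x_0)\}$. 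Since $x_0$ was arbitrary, the conclusion follows.

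Conceptually this is nothing more than the composition chain
\[
(X,d)\xrightarrow{\,id\,}(X,d^{s})\xrightarrow{\,f\,}(\R,|\cdot|)\xrightarrow{\,id\,}(\R,d_u),
\]
in which the first map is pointwise semi-Lipschitz by hypothesis, the middle map is (globally) Lipschitz, and the last map is $1$-Lipschitz because $\max\{b-a,0\}\leq|a-b|$. One could alternatively invoke Lemma~\ref{lemmacompos} directly for the composition. I do not expect any real obstacle here; the only point to verify carefully is the elementary passage from the bound on $d(x,x_0)$ to a bound on $d^{s}(x_0,x)$, but this is immediate from the definition of $d^{s}$.
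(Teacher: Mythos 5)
Your proof is correct and follows essentially the same route as the paper, which composes $f$ with the pointwise semi-Lipschitz identity $id:(X,d)\to(X,d^{s})$ via Proposition~\ref{propequividentity} and Lemma~\ref{lemmacompos}, then passes from $|\cdot|$ to $d_u$ using $\max\{t-s,0\}\le|t-s|$ (Remark~\ref{remarkconstantes}); you simply carry out the same composition by hand with explicit constants. The unpacked version has the minor advantage of not needing to check the continuity requirements built into the definition of $C_{SL}$ before applying the composition lemma, and it yields the explicit bound $\SLip f(x_0)\le L\max\{1,\sigma(x_0)\}$.
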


\begin{proof}
By Proposition \ref{propequividentity}, the identity map $id:(X, d)\to (X, d^s)$ is pointwise semi-Lipschitz, and by Lemma~\ref{lemmacompos}, we obtain that $f\circ id:(X, d) \to (\mathbb{R},|\cdot|)$ is also pointwise semi-Lipschitz. Furthermore, since  $\mathrm{SLip}_{d,d_u}(f)\leq \mathrm{SLip}_{d,d_u^s}(f)=\mathrm{SLip}_{d,d_{|\cdot|}}(f)$ (see Remark~\ref{remarkconstantes}), we conclude that $f:(X, d) \to (\R, d_u)$ is pointwise semi-Lipschitz.
\end{proof}

As a consequence, we easily obtain the following separation property:

\begin{corollary}[Separation property]\label{separan}
Suppose that the quasi-metric space $(X, d)$  is pointwise quasi-symmetric. If $A, B$ are subsets of $X$ such that $d^s(A, B)>0$, there exists a function $f\in C_{SL}(X, (\R, d_u))$ with $0\leq f \leq 1$ such that $f(A)=0$ and $f(B)=1$.
\end{corollary}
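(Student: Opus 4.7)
The plan is to exhibit an explicit function by applying the standard metric separation construction on the symmetrized space $(X, d^s)$, and then to transfer it to the asymmetric framework via the previous corollary.

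\textbf{Step 1 (Construction on the symmetrized space).} Set $\delta := d^s(A, B) > 0$ and define $f : X \to \R$ by
\[
f(x) := \min\!\left\{ 1, \, \frac{d^s(x, A)}{\delta} \right\}.
\]
Since $x \mapsto d^s(x,A)$ is $1$-Lipschitz on $(X, d^s)$, the function $f$ is $(1/\delta)$-Lipschitz from $(X, d^s)$ to $(\R, |\cdot|)$. Moreover $0 \leq f \leq 1$, with $f(A) = 0$ and $f(B) = 1$ (the latter because $d^s(x,A) \geq \delta$ for every $x \in B$).

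\textbf{Step 2 (Transfer to the asymmetric setting).} By the previous corollary, since $(X, d)$ is pointwise quasi-symmetric and $f : (X, d^s) \to (\R, |\cdot|)$ is Lipschitz, the function $f : (X, d) \to (\R, d_u)$ is pointwise semi-Lipschitz.

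\textbf{Step 3 (Continuity required by $C_{SL}$).} To conclude that $f \in C_{SL}(X, (\R, d_u))$, we must check that $f$ is continuous from the forward topology of $(X, d)$ into the symmetrized topology of $(\R, d_u)$, which is the usual topology of $(\R, |\cdot|)$. By Proposition~\ref{propequividentity}, the identity $id : (X, d) \to (X, d^s)$ is a $C_{SL}$-homeomorphism, and in particular continuous. Composing with the Lipschitz map $f : (X, d^s) \to (\R, |\cdot|)$ yields the desired continuity.

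Putting the three steps together, $f$ is the required element of $C_{SL}(X, (\R, d_u))$ separating $A$ and $B$. There is no real obstacle here: the argument is essentially a direct consequence of the preceding corollary, once one recognizes that the classical distance-to-$A$ construction on the symmetrized metric space produces a Lipschitz function that automatically inherits pointwise semi-Lipschitz behavior in the asymmetric sense.
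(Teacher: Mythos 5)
Your proof is correct and follows essentially the same route as the paper: the paper's one-line proof takes $f(x)=\frac{d^s(x,A)}{d^s(x,A)+d^s(x,B)}$, which is likewise a Lipschitz function on $(X,d^s)$ transferred to the asymmetric setting via the preceding corollary, while you use the equally standard $\min\{1,d^s(x,A)/\delta\}$. Your explicit verification of the forward-to-symmetrized continuity required by the definition of $C_{SL}$ is a welcome detail that the paper leaves implicit.
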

\begin{proof}
Choose $f(x)=\frac{d^s(x,A)}{d^s(x,A)+d^s(x,B)}$. 
\end{proof}

We also obtain the following scalarization property, which will be useful in what follows:

\begin{lemma}\label{escalar}
Suppose that the quasi-metric spaces  $(X, d_X)$ and $(Y, d_Y)$  are pointwise quasi-symmetric, and let $h:X \to Y$. The following statements are equivalent:
\begin{itemize}
\item[(a)] $h \in C_{SL}(X, Y)$
\item[(b)] ${f\circ h \in C_{SL}(X, (\mathbb{R},|\cdot|))}$ for every bounded non-negative function ${f\in C_{SL}(Y, (\mathbb{R},|\cdot|))}$.
\item[(c)] $f\circ h \in C_{SL}(X, (\mathbb{R}, d_u))$ for every bounded non-negative function $f\in C_{SL}(Y, (\mathbb{R}, d_u))$.
\end{itemize}
\end{lemma}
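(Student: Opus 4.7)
The implications (a)$\Rightarrow$(b) and (a)$\Rightarrow$(c) are immediate from the composition Lemma~\ref{lemmacompos}. The substance of the statement therefore lies in the two converses (b)$\Rightarrow$(a) and (c)$\Rightarrow$(a), which I will handle by a single argument based on pulling back two distinguished test functions through $h$.

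Fix $x_0\in X$, set $y_0:=h(x_0)$, and consider the two truncated distance functions
\[
g(y) := \min\{d_Y^s(y_0, y),\, 1\}, \qquad \varphi(y) := \min\{d_Y(y_0, y),\, 1\}.
\]
Both are bounded between $0$ and $1$ and vanish at $y_0$. The first step is to check that each of them lies in $C_{SL}(Y,(\R,|\cdot|))$ \emph{and} in $C_{SL}(Y,(\R,d_u))$. For $g$, the triangle inequality for $d_Y^s$ yields $|g(y_1)-g(y_2)|\leq d_Y^s(y_1,y_2)$; the pointwise quasi-symmetry of $(Y,d_Y)$ then gives $d_Y^s(y_1,y_2)\leq \max\{1,\sigma(y_1)+\varepsilon\}\,d_Y(y_1,y_2)$ on a suitable forward neighborhood of $y_1$, so $g$ is pointwise Lipschitz (target $|\cdot|$) and \emph{a fortiori} pointwise semi-Lipschitz (target $d_u$). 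For $\varphi$, the triangle inequality for $d_Y$ directly gives $\varphi(y_2)-\varphi(y_1)\leq d_Y(y_1,y_2)$, covering the $(\R,d_u)$ side; the reverse inequality $\varphi(y_1)-\varphi(y_2)\leq d_Y(y_2,y_1)$, combined once again with the pointwise quasi-symmetry of $Y$, yields the $(\R,|\cdot|)$ estimate. The required forward-to-usual continuity of $g$ and $\varphi$ at every point is a by-product of the same computations.

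With these test functions available, the plan is to apply the hypothesis ((b) or (c), as appropriate) first to $g$: the composition $g\circ h$ is continuous at $x_0$ in the usual topology of $\R$, and because $g(h(x_0))=0$ this forces $d_Y^s(y_0,h(x))\to 0$ as $x\to x_0$ in the forward topology of $X$. Therefore $h$ is continuous at $x_0$ into the symmetric topology of $Y$, and in particular $d_Y(y_0,h(x))<1$ for $x$ in a small forward neighborhood of $x_0$, so $\varphi(h(x))=d_Y(y_0,h(x))$ there. Applying the hypothesis to $\varphi$, and using $\varphi(h(x_0))=0$ together with $\varphi\geq 0$ (which collapses the $(\R,|\cdot|)$ and $(\R,d_u)$ bounds on the target into the same inequality $\varphi(h(x))\leq\alpha\,d_X(x_0,x)$), I obtain constants $\alpha,\delta>0$ with $\varphi(h(x))\leq \alpha\,d_X(x_0,x)$ whenever $d_X(x_0,x)<\delta$. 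Shrinking $\delta$ if necessary so that $\varphi(h(x))=d_Y(y_0,h(x))$ there, this gives $d_Y(h(x_0),h(x))\leq \alpha\,d_X(x_0,x)$, i.e., $h$ is pointwise semi-Lipschitz at $x_0$. Since $x_0$ was arbitrary, $h\in C_{SL}(X,Y)$.

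The main obstacle is exactly the verification that $g$ and $\varphi$ really belong to the required function spaces: without the pointwise quasi-symmetry of $(Y,d_Y)$, $g$ could fail to be forward-continuous, and $\varphi$ need not satisfy a two-sided Lipschitz-type estimate with respect to $d_Y$, so the hypotheses (b) or (c) would not suffice to reconstruct the semi-Lipschitz behaviour of $h$. This is the asymmetric analogue of the classical scalarization for ordinary Lipschitz maps between metric spaces, where one uses exactly the functions $y\mapsto \min\{d(y_0,y),1\}$.
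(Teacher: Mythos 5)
Your proof is correct and follows essentially the same route as the paper: the decisive step in both is the test function $\min\{d_Y(h(x_0),\cdot),1\}$, verified to lie in the required class via the pointwise quasi-symmetry of $Y$, from which $\SLip h(x_0)$ is recovered through the identity $d_Y(h(x_0),h(x))=\varphi(h(x))$ near $x_0$. The only divergence is in how continuity of $h$ is obtained --- you pull back a second explicit test function $\min\{d_Y^s(h(x_0),\cdot),1\}$, whereas the paper invokes the separation property (Corollary~\ref{separan}) to argue that $(Y,d_Y)$ carries the initial topology of the test class; both are valid.
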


\begin{proof}
The implications (a) $\Rightarrow$ (b) and (a) $\Rightarrow$ (c) follow from Lemma \ref{lemmacompos}. Now let us prove (b) $\Rightarrow$ (a). It is only in this implication where we use the hypothesis of pointwise quasi-symmetry. Taking into account Corollary \ref{separan}, we know that bounded non-negative functions in $C_{SL}(Y, (\mathbb{R}, \vert \cdot \vert))$ separate points and closed sets in $(Y, d_Y^s)$, and therefore also in $(Y, d_Y)$. Thus, $(Y, d_Y)$ has the initial topology for this family of functions, which implies that $h$ is continuous.

For each point $q \in Y$, consider the function $f_q(y) = \min \{d_Y(q, y), 1\}$. We want to show that $f_q \in C_{SL}(Y, (\mathbb{R}, \vert \cdot \vert))$. First, observe that for all $y, y_0$ in $Y$, we have
$$
f_q(y) - f_q(y_0) \leq d_Y(y_0, y).
$$
Now fix $y_0\in Y$. Due to the pointwise quasi-symmetry of $(Y, d_Y)$, we know that there exist $\alpha \geq 0$ and $\delta > 0$ such that if $d_Y(y_0, y) < \delta$, then
$$
d_Y(y, y_0) \leq \alpha d_Y(y_0, y),
$$
and therefore
$$
f_q(y_0) - f_q(y) \leq d_Y(y, y_0) \leq \alpha d_Y(y_0, y),
$$
which implies
$$
\vert f_q(y) - f_q(y_0) \vert \leq \max \{1, \alpha \} d_Y(y_0, y).
$$
From this, we conclude that $\SLip(f_q)(y_0) \leq \max\{1, \alpha\} < +\infty$. As  a consequence, we know that $f\circ h \in C_{SL}(X, (\mathbb{R}, |\cdot|))$.

Now, let $x_0 \in X$ and consider $y_0 = h(x_0)$. Then, by the continuity of $h$, for $x$ close enough to $x_0$ we have that
$$
d_Y(h(x_0), h(x)) =  \min \{d_Y(h(x_0), h(x)), 1\} = \vert (f_{y_0} \circ h)(x_0) - (f_{y_0} \circ h)(x) \vert.
$$
Thus we obtain that $\SLip h(x_0) = \SLip (f_{y_0} \circ h)(x_0) < +\infty$.

Finally, the implication (c) $\Rightarrow$ (a) is analogous to the previous one. We just need to take into account that we also have that $f_{y_0} \in C_{SL}(Y, (\mathbb{R}, d_u))$ and that
$$
d_Y(h(x_0), h(x)) =  \min \{d_Y(h(x_0), h(x)), 1\} = d_u \left((f_{y_0} \circ h)(x_0),  (f_{y_0} \circ h)(x)\right).
$$
\end{proof}

Next we are going to consider the uniform version of pointwise quasi-symmetric spaces.

\begin{definition}\label{unifquasisym}\em
We will say that a quasi-metric space $(X, d)$ is {\em uniformly quasi-symmetric} if it is pointwise quasi-symmetric and
$$
\sup_{x \in X} \sigma(x) < +\infty.
$$
\end{definition}

We the have the following simple characterization.

\begin{proposition} Let $(X, d)$ be a quasi-metric space. The following statements are equivalent:
\begin{itemize}
\item[(a)] $(X,d)$ is uniformly quasi-symmetric.
\item[(b)] The identity $id: (X, d) \to (X, d^s)$ is a $D_{SL}$-homeomorphism.
\end{itemize}
\end{proposition}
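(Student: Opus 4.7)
The plan is to bootstrap the $C_{SL}$-characterization from Proposition \ref{propequividentity} by comparing the relevant pointwise semi-Lipschitz constants. Write $J = id : (X,d) \to (X,d^s)$. The key computation is the pointwise identity
$$\SLip J(x_0) = \max\{1, \sigma(x_0)\} \quad \text{for every } x_0 \in X.$$
To see this, observe that for each $x \neq x_0$ with $d(x_0,x) > 0$ one has
$$\frac{d^s(x_0,x)}{d(x_0,x)} = \frac{\max\{d(x_0,x), d(x,x_0)\}}{d(x_0,x)} = \max\Bigl\{1, \frac{d(x,x_0)}{d(x_0,x)}\Bigr\},$$
and the $\limsup$ commutes with $\max\{1,\cdot\}$, so taking $\limsup$ as $x \to x_0$ yields $\max\{1, \sigma(x_0)\}$. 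For the reverse direction, since $d(x,y) \leq d^s(x,y)$ always, the identity $J^{-1} : (X,d^s) \to (X,d)$ satisfies $\SLip J^{-1}(x_0) \leq 1$ for every $x_0$.

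For the implication (a)$\Rightarrow$(b), I use that $(X,d)$ pointwise quasi-symmetric already gives, via Proposition \ref{propequividentity}, that $J$ is a $C_{SL}$-homeomorphism, so both $J$ and $J^{-1}$ have the required forward-to-symmetrized continuity. Setting $M = \sup_{x\in X} \sigma(x) < +\infty$, the pointwise identity above gives $\sup_{x\in X} \SLip J(x) = \max\{1,M\} < +\infty$, so $J \in D_{SL}(X,(X,d^s))$, and $\sup_{x\in X} \SLip J^{-1}(x) \leq 1$ gives $J^{-1} \in D_{SL}((X,d^s),X)$. Conversely, for (b)$\Rightarrow$(a), a $D_{SL}$-homeomorphism is in particular a $C_{SL}$-homeomorphism, so Proposition \ref{propequividentity} yields pointwise quasi-symmetry; and from $\bar{d}(x_0,x) = d(x,x_0) \leq d^s(x_0,x)$ we get $\sigma(x_0) \leq \SLip J(x_0)$ for every $x_0$, hence $\sup_{x\in X} \sigma(x) \leq \sup_{x\in X} \SLip J(x) < +\infty$, which is uniform quasi-symmetry.

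No serious obstacle is expected here: the argument is essentially a uniformization of the proof of Proposition \ref{propequividentity}, and the only minor technicality is handling isolated points (where both $\sigma$ and $\SLip J$ vanish by convention) and restricting the $\limsup$ to points $x$ with $d(x_0,x) > 0$, which is automatic once $\sigma(x_0) < +\infty$.
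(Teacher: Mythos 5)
Your proof is correct and follows essentially the same route as the paper, which also reduces to Proposition \ref{propequividentity} and observes that its argument gives $\SLip J(x_0)\leq\max\{1,K\}$ and $\SLip J^{-1}(x_0)\leq 1$, with the converse being immediate. Your explicit pointwise identity $\SLip J(x_0)=\max\{1,\sigma(x_0)\}$ is a slightly sharper formulation of the same computation, and your handling of the degenerate case $d(x_0,x)=0$ is a welcome clarification the paper leaves implicit.
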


\begin{proof}
Suppose that $(X, d)$ is uniformly  quasi-symmetric and let ${K:= \sup_{x \in X} \sigma(x) < +\infty}$. By Proposition \ref{propequividentity}, we know that the identity $J=id: (X, d) \to (X, d^s)$ is a  $C_{SL}$-homeomorphism. Furthermore, the same proof of Proposition~\ref{propequividentity} shows that ${\SLip (J)(x_0) \leq \max\{1, K\}}$ and ${\SLip (J^{-1})(x_0) \leq 1}$ for each point $x_0\in X$. Thus, we obtain (b). The converse is clear.
\end{proof}

 To finish this subsection, we will refine Lemma \ref{escalar} using the following local version of uniform quasi-symmetry.

\begin{definition} \em
We will say that a quasi-metric space $(X, d)$ is {\em locally quasi-symmetric} if for each point $x_0 \in X$, there exist a constant $K_{x_0} > 0$ and a neighborhood $U^{x_0}$ such that $\sigma(x) \leq K_{x_0}$ for all $x \in U^{x_0}$.
\end{definition}

\begin{remark}\em
A quasi-metric space $(X, d)$ is locally quasi-symmetric if and only if the identity $id: (X, d) \to (X, d^s)$ ``locally belongs'' to the space $D_{SL}((X, d), (X, d^s))$, i.e., each point $x_0 \in X$ has a neighborhood $U^{x_0}$ such that $id: (U^{x_0}, d) \to (U^{x_0}, d^s)$ belongs to the space $D_{SL}((U^{x_0}, d), (U^{x_0}, d^s))$.
\end{remark}

\begin{remark}\em
Of course, every metric space is uniformly quasi-symmetric. Furthermore,  every Finsler manifold is locally quasi-symmetric as a quasi-metric space (see e.g. \cite{BCS}). In particular, every compact Finsler manifold is uniformly quasi-symmetric.
\end{remark}

\begin{example}\em
Note that a uniformly quasi-symmetric quasi-metric space can have index of symmetry $0$. As a simple example, consider a sequence $(I_n)$ of disjoint open intervals in $\R$ and, on the union space $X=\cup_n I_n$, define the quasi-metric $d$ given by
$$
d(p,q)=\left\{\begin {array}{ll} \vert p-q\vert &\text{if}\,\,x,y\in I_n \,\, \text{for some} \,\, n \\[10pt]
1 &\text{if} \,\, p\in I_n, \, q\in I_m , \, \,  n< m \\[10pt]
n &\text{if} \,\, p\in I_n, \, q\in I_m , \, \,  n> m
\end{array}
\right.
$$
It is easily seen that, in this case, $\sigma (p)=1$ for every $p\in X$. On the other hand, if we choose $p_n\in I_n$ and $q\in I_q$, we have that
$$
c_d(X)=\inf_{d(p,q)>0}\frac{d(q,p)}{d(p,q)} \leq \inf_{m}\frac{d(q, p_n)}{d(p_n, q)}= \inf_{n}\frac{1}{n}=0.
$$
\end{example}

We can compare the notions of partial symmetry as follows, with all inclusions being strict.

\begin{table}[H]
\begin{tabular}{|ccccccccccc|}
\hline
\begin{tabular}[c]{@{}c@{}}Quasi-\\ metric\\ spaces\end{tabular} & $\supsetneq$ & \begin{tabular}[c]{@{}c@{}}Pointwise\\ quasi-\\ symmetric \\ spaces\end{tabular} & $\supsetneq$ & \begin{tabular}[c]{@{}c@{}}Pointwise\\ quasi-\\ symmetric \\ spaces\end{tabular} & $\supsetneq$ & \begin{tabular}[c]{@{}c@{}}Locally\\ quasi-\\ symmetric\\ spaces\end{tabular} & $\supsetneq$ & \begin{tabular}[c]{@{}c@{}}Spaces with\\ positive \\ index of\\ symmetry\end{tabular} & $\supsetneq$ & \begin{tabular}[c]{@{}c@{}}Metric\\ spaces\end{tabular} \\ \hline
\end{tabular}
\end{table}
Now in the class of locally quasi-symmetric spaces, we can refine the result of Lemma \ref{escalar}:

\begin{lemma}\label{lemmaCSL}
Suppose that the quasi-metric spaces $(X, d_X)$ and $(Y, d_Y)$ are locally quasi-symmetric, and let $h:X \to Y$. The following statements are equivalent:
\begin{itemize}
\item[(a)] $h \in C_{SL}(X, Y)$.
\item[(b)] For every bounded non-negative function $f\in D_{SL}(Y)$, we have  $f \circ h \in C_{SL}(X)$.
\item[(c)] For every bounded non-negative function $f\in D_{SL}(Y, (\mathbb{R}, d_u))$, we have $f \circ h \in C_{SL}(X, (\mathbb{R}, d_u))$.
\end{itemize}
\end{lemma}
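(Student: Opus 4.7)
The plan is to adapt the template of Lemma~\ref{escalar}, upgrading the test functions $f_q(y)=\min\{d_Y(q,y),1\}$ from $C_{SL}$ to $D_{SL}$; this upgrade is precisely where the strengthening from pointwise to local quasi-symmetry is needed. The implications $(a)\Rightarrow(b)$ and $(a)\Rightarrow(c)$ are immediate from Lemma~\ref{lemmacompos} together with the inclusion $D_{SL}(Y,\cdot)\subset C_{SL}(Y,\cdot)$.

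For the converse implications, fix $y_0\in Y$. By local quasi-symmetry of $(Y,d_Y)$, choose a forward-open neighborhood $U^{y_0}$ of $y_0$ and a constant $K_{y_0}>0$ with $\sigma(y)\leq K_{y_0}$ for every $y\in U^{y_0}$, and then pick $r>0$ so that $B^+(y_0,2r)\subset U^{y_0}$. The test function I would use is
$$
f_{y_0}(y):=\min\{d_Y(y_0,y),r\},
$$
which is non-negative, bounded by $r$, and vanishes at $y_0$. I would then check that $f_{y_0}$ lies in both $D_{SL}(Y,(\R,|\cdot|))$ and $D_{SL}(Y,(\R,d_u))$. For the $d_u$-target, the triangle inequality directly gives $\max\{f_{y_0}(y)-f_{y_0}(y^*),0\}\leq d_Y(y^*,y)$, hence $\SLip_{d_u}f_{y_0}(y^*)\leq 1$ on all of $Y$; no quasi-symmetry is required here. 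For the $|\cdot|$-target the bound $|f_{y_0}(y)-f_{y_0}(y^*)|\leq \max\{d_Y(y,y^*),d_Y(y^*,y)\}$ is split in two regions. If $y^*\in B^+(y_0,2r)\subset U^{y_0}$ then $\sigma(y^*)\leq K_{y_0}$, so local quasi-symmetry at $y^*$ yields $\SLip f_{y_0}(y^*)\leq \max\{1,K_{y_0}\}$. If instead $d_Y(y_0,y^*)\geq 2r$, local quasi-symmetry at $y^*$ itself forces $d_Y(y,y^*)\leq K_{y^*}d_Y(y^*,y)$ on a forward-neighborhood, so $d_Y(y_0,y)\geq d_Y(y_0,y^*)-d_Y(y,y^*)>r$ for $y$ sufficiently close to $y^*$; there $f_{y_0}$ is locally constant and $\SLip f_{y_0}(y^*)=0$. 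Forward continuity of $f_{y_0}$ into $(\R,|\cdot|)$ at every point follows along the same lines.

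With the test functions in hand, the proof concludes as in Lemma~\ref{escalar}. Given $x_0\in X$ with $y_0:=h(x_0)$, apply the relevant hypothesis to $f_{y_0}$ to deduce $f_{y_0}\circ h\in C_{SL}(X,\cdot)$, which in both cases yields forward continuity into $(\R,|\cdot|)$ (since $d_u^s=|\cdot|$). Because $f_{y_0}(h(x_0))=0$ and $r>0$, this continuity forces $d_Y(y_0,h(x))\to 0$ as $x\to x_0$, so for $x$ close enough to $x_0$ we have $f_{y_0}(h(x))=d_Y(h(x_0),h(x))$. Passing to the limsup then gives
$$
\SLip h(x_0)\;=\;\Lip(f_{y_0}\circ h)(x_0)\qquad\text{or}\qquad \SLip h(x_0)\;=\;\SLip(f_{y_0}\circ h)(x_0),
$$
respectively for $(b)$ and $(c)$, both finite by hypothesis. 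Finally, pointwise semi-Lipschitzness of $h$ combined with local quasi-symmetry at $y_0$ upgrades forward continuity of $h$ at $x_0$ to $(Y,d_Y^s)$-continuity, proving $h\in C_{SL}(X,Y)$.

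The main obstacle is the $|\cdot|$-target statement $f_{y_0}\in D_{SL}(Y,(\R,|\cdot|))$: the candidate $\min\{d_Y(y_0,\cdot),r\}$ has $\SLip f_{y_0}(y^*)\lesssim \max\{1,\sigma(y^*)\}$, and $\sigma$ can be unbounded on $Y$, so pointwise quasi-symmetry (as in Lemma~\ref{escalar}) would not suffice. The choice $B^+(y_0,2r)\subset U^{y_0}$ is designed precisely to confine the non-constant part of $f_{y_0}$ to a region where $\sigma$ is uniformly controlled by $K_{y_0}$, reducing the global Lipschitz control to local quasi-symmetry at the single point $y_0$.
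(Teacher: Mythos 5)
Your argument is correct and follows essentially the same route as the paper's proof: the same truncated distance functions $g_{y_0}(y)=\min\{d_Y(y_0,y),r\}$ with $r$ calibrated to the local quasi-symmetry neighborhood of $y_0$, the same two-case verification (near $y_0$ use the uniform bound $K_{y_0}$ on $\sigma$, far from $y_0$ the function is locally constant) that they lie in $D_{SL}$, and the same recovery of $\SLip h(x_0)=\SLip (f_{y_0}\circ h)(x_0)$. The only cosmetic difference is that you derive the required continuity of $h$ directly from $f_{y_0}\circ h$ together with quasi-symmetry at $y_0=h(x_0)$, whereas the paper invokes the separation/initial-topology argument.
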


\begin{proof}
Let us show that (b)$ \Rightarrow$ (a). First, fix $y_0 \in Y$. By hypothesis, there exist $K > 0$ and $\delta > 0$ such that if $d_Y(y_0, y) < \delta$, then $\sigma(y) < K$. Now choose $0 < r < \delta$ and define
$$
g_{y_0}(y) = \min \{d_Y(y_0, y), r\}.
$$
As in the proof of Lemma \ref{escalar}, we have $g_{y_0} \in C_{SL}(Y, (\mathbb{R}, |\cdot|))$. Moreover, we observe the following:

- If $d_Y(y_0, y) < \delta$, then $\SLip g_{y_0}(y) \leq \max\{1, K\}$.

- If $d_Y(y_0, y) > r$, then $g_{y_0}$ is constant with value $r$ in a neighborhood of $y$, and thus $\SLip g_{y_0}(y) = 0$.

This shows that $g_{y_0} \in D_{SL}(Y, (\mathbb{R}, |\cdot|))$. From here, we deduce that bounded non-negative functions in $D_{SL}(Y, (\mathbb{R}, |\cdot|))$ separate points and closed sets of $(Y, d_Y)$. Therefore, $(Y, d_Y)$ has the initial topology with respect to this family of functions, and hence the function $h$ is continuous. In addition, we have that $g_{y_0} \circ h \in D_{SL}(X, (\mathbb{R}, |\cdot|))$.

Now, let $x_0 \in X$ and consider $y_0 = h(x_0)$. Taking into account that, for $x$ close enough to $x_0$,
$$
d_Y(h(x_0), h(x))  = \min \{ d_Y(h(x_0), h(x)), r \}  = |(g_{y_0} \circ h)(x_0) - (g_{y_0} \circ h)(x)|,
$$
we obtain that $\SLip h(x_0) = \SLip (g_{y_0} \circ h)(x_0) <+\infty$. In this way,  $h \in C_{SL}(X, Y)$.

On the other hand, the implication (c) $\Rightarrow$ (a) is analogous.

\end{proof}

\subsection{Banach-Stone Theorems for pointwise semi-Lipschitz functions}

Our goal now is to offer some form of Banach-Stone type results characterizing the pointwise semi-Lipschitz structure of a quasi-metric space $(X, d_X)$ in terms of the topological-algebraic structure of certain function spaces of real-valued pointwise semi-Lipschitz functions defined on $X$. However, the spaces $C_{SL}(X)$ and $D_{SL}(X)$ do not possess a convenient algebraic structure. Indeed, it is clear that, in general,  the product of two semi-Lipschitz real valued functions on $X$ is not semi-Lipschitz, although it is so if the functions are {\em bounded and non-negative}. So, we are lead to consider bounded non-negative functions in order to guarantee the stability under product in our function spaces. Furthermore, $C_{SL}(X)$ and $D_{SL}(X)$ are not in general linear spaces, as we have seen in Example \ref{funcontDSL}. Taking all this into account, we proceed as follows. We first define $C_{SL}^{b,+}(X)$ (and respectively, $D_{SL}^{b,+}(X)$) the space of bounded and non-negative functions in $C_{SL}(X)$ (respectively, in $D_{SL}(X)$). These spaces are {\em cones}, and are also closed under multiplication. Then we define
\begin{itemize}
\item[$\diamond$] $\mathcal{A}_{CSL}(X)$ as the linear span of $C_{SL}^{b,+}(X)$,
\item[$\diamond$] and $\mathcal{A}_{DSL}(X)$ as the linear span of $D_{SL}^{b,+}(X)$.
\end{itemize}

Then it is clear that $\mathcal{A}_{CSL}(X)$ and $\mathcal{A}_{DSL}(X)$ are {\em algebras} of functions on $X$, which are {\em ordered} under the natural pointwise order of functions. Furthermore, they can be endowed with  natural {\em extended asymmetric norms} as per Definition \ref{defasymmnorm} in the following way (see \ref{conicsemiringtoalgebra} for details). We consider on $\mathcal{A}_{CSL}(X)$ the extended norm
$$
\| f |_C=\begin{cases}
\begin{array}{ccc}
&\| f \|_{\infty} &\text{ if } f\in C_{SL}^{b,+}(X) \\
&+\infty&\text{ if } f \notin C_{SL}^{b,+}(X).
\end{array}
\end{cases}
$$
On the other hand, we consider on $D_{SL}^{b,+}(X)$ the extended norm
$$
\| f |_D=\begin{cases}
\begin{array}{ccc}
&\max \{ \Vert f \Vert_{\infty},  \sup_{x\in X} \mathrm{SLip} f(x) \} &\text{ if } f\in D_{SL}^{b,+}(X) \\
&+\infty&\text{ if } f \notin D_{SL}^{b,+}(X).
\end{array}
\end{cases}
$$
Then, as a consequence of Theorem \ref{BSt}  in the next Section, we obtain the following Banach-Stone type results. Here  completeness of the spaces is needed. Recall that  quasi-metric space $(X,d_X)$ is said to be {\em bicomplete} if the symmetrized space $(X, d_X^s)$ is complete in the usual sense.

\begin{theorem} \label{csl-BS}
Suppose that the quasi-metric spaces $(X,d_X)$ and $(Y,d_Y)$ are bicomplete and locally quasi-symmetric. The following assertions are equivalent:
\begin{itemize}
\item[(a)] $X$ and $Y$ are $C_{SL}$-homeomorphic
\item[(b)] {\small $\mathcal{A}_{CSL}(X)$} and {\small $\mathcal{A}_{CSL}(Y)$} are isomorphic as ordered, extended asymmetric normed algebras.
\end{itemize}
\end{theorem}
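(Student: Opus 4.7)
The implication $(a) \Rightarrow (b)$ is executed by pullback. Given a $C_{SL}$-homeomorphism $h\colon X \to Y$, I define $T_h\colon \mathcal{A}_{CSL}(Y) \to \mathcal{A}_{CSL}(X)$ by $T_h(f) = f \circ h$. By Lemma \ref{lemmacompos}, $T_h$ sends $C_{SL}^{b,+}(Y)$ into $C_{SL}^{b,+}(X)$; since $h$ is a bijection, $\|f \circ h\|_\infty = \|f\|_\infty$ and non-negativity is preserved. Extending linearly, $T_h$ becomes an algebra homomorphism which is order-preserving (from pointwise evaluation) and has inverse $T_{h^{-1}}$. Thus $T_h$ restricts to an isometric bijection of the cones $C_{SL}^{b,+}(Y) \to C_{SL}^{b,+}(X)$, which is exactly the preservation of the extended asymmetric norm $\|\cdot|_C$ required.

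For the non-trivial direction $(b) \Rightarrow (a)$, my plan is to invoke the abstract Banach-Stone type theorem for extended asymmetric normed algebras (Theorem \ref{BSt}) applied to $\mathcal{A} = \mathcal{A}_{CSL}(X)$ and $\mathcal{B} = \mathcal{A}_{CSL}(Y)$. That result will produce, from any isomorphism of ordered extended asymmetric normed algebras, a bijection $h\colon X \to Y$ such that the isomorphism is realised as the pullback $f \mapsto f \circ h$. To apply it one must verify the general hypotheses for our specific algebras: that the cone $C_{SL}^{b,+}(X)$ separates points and closed sets of $(X, d_X)$ (this follows from Corollary \ref{separan}, since local quasi-symmetry implies pointwise quasi-symmetry), and that the structure space of $\mathcal{A}_{CSL}(X)$ is canonically identified with $X$. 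The bicompleteness of $(X, d_X^s)$ and $(Y, d_Y^s)$ is precisely what rules out phantom characters and guarantees that every character of the relevant type is evaluation at a point of $X$ (resp.\ $Y$).

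The main obstacle I anticipate is promoting the bijection $h$ delivered by Theorem \ref{BSt} from a mere set-theoretic or purely topological map into a $C_{SL}$-homeomorphism. Here the scalarization Lemma \ref{escalar} is the decisive tool: since the algebra isomorphism preserves the extended norm $\|\cdot|_C$, pullback by $h$ carries $C_{SL}^{b,+}(Y)$ onto $C_{SL}^{b,+}(X)$; equivalently, for every bounded non-negative $f \in C_{SL}(Y, (\R, d_u))$, the composition $f \circ h$ belongs to $C_{SL}(X, (\R, d_u))$. Since both spaces are locally quasi-symmetric, they are in particular pointwise quasi-symmetric, so condition (c) of Lemma \ref{escalar} is satisfied and we deduce $h \in C_{SL}(X, Y)$. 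Running the identical argument with the inverse isomorphism and $h^{-1}$ yields $h^{-1} \in C_{SL}(Y, X)$, so $h$ is a $C_{SL}$-homeomorphism, which closes the proof.
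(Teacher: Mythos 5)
Your proposal is correct and follows essentially the same route as the paper: the forward direction via the composition operator and Lemma \ref{lemmacompos}, and the converse by applying Theorem \ref{BSt} (to the symmetrized, hence complete, spaces) with $\mathcal{F}(X,Y)=C_{SL}(X,Y)$ and $\mathcal{G}=C_{SL}^{b,+}$, using Corollary \ref{separan} for the separation hypothesis and Lemma \ref{escalar} for the scalarization that upgrades the bijection to a $C_{SL}$-homeomorphism. The only cosmetic difference is that you speak of the isomorphism ``preserving'' the norm $\|\cdot|_C$, whereas mere bicontinuity (which sends finite subcones to finite subcones) already suffices for the argument.
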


\begin{proof}
Suppose first that there exists a $C_{SL}$-homeomorphism $\tau : (X, d_X) \to (Y,d_Y)$. It is clear from Lemma \ref{lemmacompos} that the corresponding composition operator $T:\mathcal{A}_{CSL}(Y) \to \mathcal{A}_{CSL}(X)$ given by $Tf:= f\circ \tau$ is well-defined, and it is easy to check that $T$ is an order-preserving isomorphism of extended asymmetric normed algebras (see Definition \ref{extasymiso}). 

For the converse, we apply Theorem \ref{BSt} to the symmetrized spaces $(X, d_X^s)$ and $(Y, d_Y^s)$. Here we choose the function space $\mathcal{F}(X,Y)=C_{SL}(X,Y)$, and the cones $\mathcal{G}(X)=C_{SL}^{b,+}(X)$ and $\mathcal{G}(Y)=C_{SL}^{b,+}(Y)$, both endowed with the corresponding norm $\| \cdot |_C$. The required scalarization properties (ii) and (ii') of Theorem \ref{BSt} follow from Lemma \ref{escalar},  and the separation property (iii) follows from Lemma \ref{separan}.
\end{proof}

In order to deal with $D_{SL}$-functions, we need the following variant of of Lemma \ref{lemmaCSL}.

\begin{lemma}\label{lemmaDSL}
Suppose that the quasi-metric spaces $(X, d_X)$ and $(Y, d_Y)$ are uniformly quasi-symmetric, and let $h:X \to Y$. The following assertions are equivalent:
\begin{itemize}
\item[(a)] $h \in D_{SL}(X, Y)$.
\item[(b)] There exists $C\geq 0$ such that, for every bounded non-negative function ${f\in D_{SL}(Y, (\mathbb{R}, |\cdot|))}$, we have  $f \circ h \in D_{SL}(X, (\mathbb{R}, |\cdot|))$ and $\| f\circ h |_D \leq C \, \| f |_D$.
\item[(c)] There exists $C\geq 0$ such that, for every bounded non-negative function $f\in D_{SL}(Y, (\mathbb{R}, d_u))$, we have $f \circ h \in D_{SL}(X, (\mathbb{R}, d_u))$ $\| f\circ h |_D \leq C \, \| f |_D$.
\end{itemize}
\end{lemma}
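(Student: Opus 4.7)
The plan is to mirror the three-step structure of the proof of Lemma \ref{lemmaCSL}, but quantify every estimate so that the uniform bound required by the $D_{SL}$-norm $\| \cdot |_D$ propagates through. For the direction (a) $\Rightarrow$ (b) (and identically (a) $\Rightarrow$ (c)) I would simply combine Lemma \ref{lemmacompos} with its chain-rule inequality: taking suprema in $\SLip(f \circ h)(x) \leq \SLip f(h(x)) \cdot \SLip h(x)$ gives $\|\SLip(f \circ h)\|_\infty \leq \|\SLip f\|_\infty \cdot \|\SLip h\|_\infty$, while the sup-norm is obviously non-expansive under composition with $h$. Setting $C := \max\{1, \|\SLip h\|_\infty\}$ then yields $\|f \circ h |_D \leq C \|f |_D$, regardless of whether the target is $(\mathbb{R}, |\cdot|)$ or $(\mathbb{R}, d_u)$.

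For the converse, since $D_{SL} \subset C_{SL}$ the hypothesis in either (b) or (c) is already strong enough to invoke Lemma \ref{lemmaCSL} and conclude $h \in C_{SL}(X, Y)$; what remains is the uniform bound $\sup_{x \in X} \SLip h(x) < \infty$. I would use the same family of capped distance test functions $f_{y_0}(y) := \min\{d_Y(y_0, y), 1\}$ that appears in the proof of Lemma \ref{lemmaCSL}, but now I must show that their $D_{SL}$-norms are uniformly bounded in $y_0$. A triangle-inequality estimate gives $|f_{y_0}(y') - f_{y_0}(y'')| \leq \max\{d_Y(y', y''), d_Y(y'', y')\}$, and here \emph{uniform} quasi-symmetry, with $K := \sup_{y \in Y} \sigma(y) < \infty$ (Definition \ref{unifquasisym}), lets me dominate $d_Y(y'', y')$ locally by $K \, d_Y(y', y'')$ at every point. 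Consequently $\SLip f_{y_0}(y') \leq \max\{1, K\}$ at every $y'$, \emph{independently of} $y_0$, so $\| f_{y_0} |_D \leq \max\{1, K\}$ uniformly; the same bound survives with the $d_u$-target, since the $d_u$-slope is dominated by the $|\cdot|$-slope.

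With this uniform bound, hypothesis (b) yields $\| f_{y_0} \circ h |_D \leq C \max\{1, K\}$ uniformly in $y_0$. For an arbitrary $x_0 \in X$ I set $y_0 := h(x_0)$; forward continuity of $h$ makes $d_Y(h(x_0), h(x)) < 1$ for $x$ close enough to $x_0$, so $f_{y_0}(h(x_0)) = 0$ and $f_{y_0}(h(x)) = d_Y(h(x_0), h(x))$. A short computation then identifies $\SLip(f_{y_0} \circ h)(x_0)$ with $\SLip h(x_0)$, giving $\SLip h(x_0) \leq C \max\{1, K\}$ for every $x_0 \in X$, i.e., $h \in D_{SL}(X, Y)$. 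The implication (c) $\Rightarrow$ (a) is entirely parallel: because $f_{y_0}(h(x_0)) = 0$, the quantity $d_u((f_{y_0} \circ h)(x_0), (f_{y_0} \circ h)(x))$ collapses to $f_{y_0}(h(x))$, and the same chain of bounds goes through.

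The main obstacle is precisely the uniformity of $\| f_{y_0} |_D$ over $y_0 \in Y$: this is the step where a merely \emph{local} quasi-symmetry bound $K_{y'}$ varying with $y'$ (as was enough in Lemma \ref{lemmaCSL}) would fail to produce a norm bound independent of the base point, which is exactly why the hypothesis must be strengthened from locally to uniformly quasi-symmetric. Everything else is bookkeeping of multiplicative constants along the chain rule and the pointwise identification of $\SLip(f_{y_0} \circ h)(x_0)$ with $\SLip h(x_0)$.
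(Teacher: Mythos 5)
Your proof is correct and follows essentially the same route as the paper's: the forward implications via the chain rule of Lemma \ref{lemmacompos}, and the converse by first invoking Lemma \ref{lemmaCSL} to get $h\in C_{SL}(X,Y)$ and then testing against the capped distance functions $f_{y_0}(y)=\min\{d_Y(y_0,y),1\}$, whose $\|\cdot|_D$-norms are bounded by $\max\{1,K\}$ uniformly in $y_0$ thanks to the global bound $K=\sup_{y}\sigma(y)$. Your explicit remark that the uniformity of $\|f_{y_0}|_D$ over all base points $y_0$ is exactly where uniform (rather than local) quasi-symmetry is needed is a useful clarification of a step the paper states somewhat tersely.
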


\begin{proof}
The implications (a) $\Rightarrow$ (b) and (a)$ \Rightarrow$ (c) follow from Lemma \ref{lemmacompos}. Suppose now that (b) holds. By Lemma \ref{lemmaCSL} we obtain that $h \in C_{SL}(X, Y)$. If we denote ${K:= \sup_{x \in X} \sigma(x) < +\infty}$ we obtain, as in the proof of Lemma \ref{escalar}, that for each $y_0\in Y$, the function ${f_{y_0}(y) = \min \{d_Y(y_0, y), 1\}}$ satisfies that $\SLip(f_{y_0}(y_0)) \leq \max\{1, K\}$, and therefore  $\| f_{y_0} |_D\leq \max\{1, K\}$. Now for each $x_0\in X$ we choose $y_0=h(x_0)$ and we obtain, again as in the proof of Lemma \ref{escalar},  that
$$
\SLip h(x_0) = \SLip (f_{y_0} \circ h)(x_0) \leq \sup_{x\in X} \mathrm{SLip} (f_{y_0} \circ h)(x) \leq \| f_{y_0} \circ h |_D \leq C \, \| f_{y_0} |_D
\leq C \, \max\{1, K\}.
$$
In this way we see that $h \in D_{SL}(X, Y)$.

The proof of (c) implies (a) is analogous.
\end{proof}

Finally, as a consequence of Theorem \ref{BSt}, we obtain the following result. 
\begin{theorem} \label{dsl-BS}
Suppose that the quasi-metric spaces $(X,d_X)$ and $(Y,d_Y)$ are bicomplete and uniformly quasi-symmetric. The following assertions are equivalent:
\begin{itemize}
\item[(a)] $X$ and $Y$ are $D_{SL}$-homeomorphic.
\item[(b)] {\small $\mathcal{A}_{DSL}(X)$} and {\small $\mathcal{A}_{DSL}(Y)$}  are isomorphic as ordered, extended asymmetric normed algebras.
\end{itemize}
\end{theorem}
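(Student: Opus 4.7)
The plan is to mirror the argument used for Theorem \ref{csl-BS}, upgrading every occurrence of $C_{SL}$ to $D_{SL}$ and of local quasi-symmetry to uniform quasi-symmetry, while substituting Lemma \ref{lemmaCSL} by Lemma \ref{lemmaDSL}. The two directions will invoke, respectively, Lemma \ref{lemmacompos}(b) and the abstract Theorem \ref{BSt}.

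For (a) $\Rightarrow$ (b), given a $D_{SL}$-homeomorphism $\tau:(X,d_X)\to(Y,d_Y)$, I would define the composition operator $T:\mathcal{A}_{DSL}(Y)\to\mathcal{A}_{DSL}(X)$ by $Tf:=f\circ\tau$. By Lemma \ref{lemmacompos}(b), $T$ sends $D_{SL}^{b,+}(Y)$ into $D_{SL}^{b,+}(X)$, and hence extends by linearity to a well-defined bijective algebra homomorphism whose inverse is induced by $\tau^{-1}$. Order preservation is immediate from the pointwise definition. For the norm, the pointwise inequality $\SLip (f\circ\tau)(x)\leq \SLip f(\tau(x))\cdot \SLip\tau(x)$ from Lemma \ref{lemmacompos}(a) combined with $\|f\circ\tau\|_\infty=\|f\|_\infty$ yields
\[
\| Tf |_D \leq \max\{1,\|\SLip \tau\|_\infty\}\,\| f |_D
\]
on $D_{SL}^{b,+}(Y)$, and analogously for $T^{-1}$. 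Extending by the convention on the extended asymmetric norm $\| \cdot |_D$, $T$ becomes an isomorphism of ordered, extended asymmetric normed algebras.

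For the converse (b) $\Rightarrow$ (a), I would apply Theorem \ref{BSt} to the symmetrized metric spaces $(X,d_X^s)$ and $(Y,d_Y^s)$, which are complete by bicompleteness. As inputs to that theorem I take the function space $\mathcal{F}(X,Y)=D_{SL}(X,Y)$ and the cones $\mathcal{G}(X)=D_{SL}^{b,+}(X)$, $\mathcal{G}(Y)=D_{SL}^{b,+}(Y)$, both endowed with the extended asymmetric norm $\| \cdot |_D$. The required scalarization hypotheses (ii) and (ii') of Theorem \ref{BSt} are exactly the content of Lemma \ref{lemmaDSL}: a map $h$ belongs to $D_{SL}(X,Y)$ if and only if composition with bounded non-negative elements of $D_{SL}(Y)$ (respectively $D_{SL}(Y,(\R,d_u))$) produces an element of the corresponding $D_{SL}(X)$ space with a uniform norm bound $C$. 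The separation property (iii) follows from Corollary \ref{separan}, which applies since uniform quasi-symmetry implies pointwise quasi-symmetry. Applying Theorem \ref{BSt}, any isomorphism of ordered extended asymmetric normed algebras $\mathcal{A}_{DSL}(X)\to\mathcal{A}_{DSL}(Y)$ is then induced by a $D_{SL}$-homeomorphism $\tau:X\to Y$.

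The main obstacle I anticipate is not conceptual but bookkeeping: I need to make sure that the extended asymmetric norm $\| \cdot |_D$ behaves correctly under the linear-span construction turning the cone $D_{SL}^{b,+}$ into the algebra $\mathcal{A}_{DSL}$ (via the convention referenced as \ref{conicsemiringtoalgebra}), so that the abstract isomorphism in hypothesis (b) genuinely restricts to a cone isomorphism with bounded distortion between $D_{SL}^{b,+}(X)$ and $D_{SL}^{b,+}(Y)$. Once this is in place, Lemma \ref{lemmaDSL} provides precisely the scalarization needed to translate that cone isomorphism into a $D_{SL}$-homeomorphism via Theorem \ref{BSt}; the completeness and uniform quasi-symmetry hypotheses are what make both the scalarization lemma and the separation corollary applicable simultaneously.
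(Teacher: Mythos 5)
Your forward direction (a) $\Rightarrow$ (b) is correct and matches the paper, including the norm estimate via Lemma \ref{lemmacompos}. The problem is in the converse. You feed $\mathcal{F}(X,Y)=D_{SL}(X,Y)$ into Theorem \ref{BSt}, and then assert that its hypotheses (ii) and (ii') ``are exactly the content of Lemma \ref{lemmaDSL}.'' They are not. Hypothesis (ii) of Theorem \ref{BSt} is purely qualitative: it asks that $h\in\mathcal{F}(X,Y)$ whenever $f\circ h\in\mathcal{G}(X)$ for \emph{all} $f\in\mathcal{G}(Y)$, with no norm control on the compositions. Lemma \ref{lemmaDSL}(b), by contrast, assumes the existence of a single constant $C$ with $\| f\circ h |_D\leq C\,\| f |_D$ for all such $f$, and its proof genuinely uses that uniformity: without it, the estimate $\SLip h(x_0)=\SLip(f_{h(x_0)}\circ h)(x_0)\leq \| f_{h(x_0)}\circ h |_D$ gives a finite bound at each point $x_0$ that depends on $h(x_0)$, so one only concludes $h\in C_{SL}(X,Y)$, not $\sup_x\SLip h(x)<+\infty$. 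As written, your verification of (ii) for $\mathcal{F}=D_{SL}(X,Y)$ is therefore unsupported, and the qualitative implication you would need is not established anywhere in the paper.

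The repair is the two-step route the paper actually takes: apply Theorem \ref{BSt} with $\mathcal{F}(X,Y)=C_{SL}(X,Y)$ (whose scalarization hypothesis is qualitative and does follow from the locally quasi-symmetric scalarization lemma, uniform quasi-symmetry being stronger), obtaining a $C_{SL}$-homeomorphism $\tau$ with $Tf=f\circ\tau$. Only \emph{after} the representation is in hand do you invoke the continuity of $T$ as an isomorphism of extended asymmetric normed algebras (Definition \ref{continuityextnorm}) to extract the uniform constant $C$ with $\| f\circ\tau |_D=\| Tf |_D\leq C\,\| f |_D$; this is precisely the hypothesis of Lemma \ref{lemmaDSL}(b), which then upgrades $\tau$ to $D_{SL}(X,Y)$, and symmetrically $\tau^{-1}$ via $T^{-1}$. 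Your final paragraph shows you sensed that the norm bookkeeping was the delicate point; the missing step is that the bound enters only a posteriori, through the continuity of $T$, and cannot be packaged into the choice of $\mathcal{F}$ inside Theorem \ref{BSt}.
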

\begin{proof}
Suppose first that there exists a $D_{SL}$-homeomorphism $\tau : (X, d_X) \to (Y,d_Y)$. We see from Lemma \ref{lemmacompos} that the composition operator $T:\mathcal{A}_{DSL}(Y) \to \mathcal{A}_{DSL}(X)$ given by $Tf:= f\circ \tau$ is well-defined, and also in this case it is easy to check that $T$ is an order-preserving isomorphism of extended asymmetric normed algebras (see Definition \ref{extasymiso}). 

For the converse, we apply again Theorem \ref{BSt} to the symmetrized spaces $(X, d_X^s)$ and $(Y, d_Y^s)$. We choose the function space $\mathcal{F}(X,Y)=C_{SL}(X,Y)$, and the cones $\mathcal{G}(X)=D_{SL}^{b,+}(X)$ and $\mathcal{G}(Y)=D_{SL}^{b,+}(Y)$, both endowed with the corresponding asymmetric norm norm $\| \cdot |_D$. The required scalarization properties (ii) and (ii') of Theorem \ref{BSt} follow from Lemma \ref{lemmaDSL},  and the separation property (iii) follows from Lemma \ref{separan}. In this way we obtain a $C_{SL}$-homeomorphism $\tau : X \to Y$ such that
$$
Tf=f\circ \tau
$$
for all $f\in \mathcal{A}_{DSL}(Y)$. From the continuity of $T$, there exists a constant $C\geq 0$ such that
$\| f\circ \tau |_D = \| Tf |_D \leq C \, \| f |_D$, for all $f\in \mathcal{A}_{DSL}(Y)$. Thus from Lemma \ref{lemmaDSL} we obtain that $\tau \in D_{SL}(X,Y)$, and the same holds for $\tau^{-1}$.
\end{proof}

\section{An abstract Banach-Stone type theorem}\label{secbsabstracto}

\subsection{Preliminaries on normed conic-semirings and asymmetric extended normed algebras}
We start this subsection by reviewing several definitions and notions of asymmetric nature found in the literature, that will be necessary to state and prove the main results of this Section. For a more in-depth introduction to these concepts, we refer the reader to \cite{cobzas}, \cite{DSV}, \cite{V} and \cite{tesisFV}.

\begin{definition}\label{defasymmnorm}\em
Given a real vector space $E$, we denote by  $\|\cdot|:E\to\R_+=[0, +\infty)$ an {\em asymmetric norm} on $E$, that is, a function satisfying:
\begin{itemize}
\item[(i)] $\forall\, x,y\in E$:\, $\|x+y|\,\leq\,\|x|\,+\,\|y|$;  \smallskip
\item[(ii)] $\forall\, x\in E$:\quad $x=0 \iff \|x|=0$;  \smallskip
\item[(iii)] $\forall \,x\in E,\,\forall r > 0$:\, $\|r\,x|\,=r\,\|x|$.
\end{itemize}
\end{definition}

The terminology of \emph{asymmetric normed space} refers to pairs $(E,\|\cdot|)$, where $\|\cdot|$ is an asymmetric norm on $E$. The symbol $\|\cdot\vert$, using two vertical bars on the left and only one bar on the right side, serves as a reminder of the asymmetric nature of these type of functionals, in the sense that the values $\|x\vert$ and $\| -x\vert$ may not coincide.

 We may also consider, keeping the same notation, {\em extended asymmetric norms}, allowing $\|\cdot|$ to take the value $+\infty$. The notion of (symmetric) extended norms was studied by Beer and Vanderwerff in \cite{BV}, and was generalized shortly after by Salas and Tapia-Garc\'ia in \cite{ST}, where they studied extended semi-norms and extended locally convex spaces. Asymmetric norms with infinite values have been considered in \cite{V}. The same considerations can be applied to quasi-metric spaces, by allowing the quasi-metric to take the value $+\infty$, in which case we will refer to them as {\em extended quasi-metrics.}

 The notion of asymmetric norm can be generalized further by allowing asymmetries in an algebraic sense, which will be done by considering cones instead of linear spaces

\begin{definition}[Cone] \label{defcone}\em
 A subset $C$ of a real linear space $E$ will be called a \emph{cone} if it is closed under finite sums and under multiplication by non-negative scalars. In other words, for all $x,y\in C$ and $r\geq 0$:
\begin{enumerate}
\renewcommand\labelenumi{(\roman{enumi})}
\leftskip .35pc
\item $x+y\in C$,
\item $r\cdot x\in C$.
\end{enumerate}
In particular, under this definition, every cone is a convex subset of $E$ containing the origin. A \emph{subcone} of a cone $C$ will be any set that is a cone itself, and a \emph{linear map} on a cone $C$ will be the restriction of a linear map (with values on some linear space $F$) on the linear space $\mathrm{span}(C)=C-C\subseteq E$.
\end{definition}

We will consider cones endowed with an asymmetric norm, as follows.

\begin{definition}[Conic norm] \label{defnorm}\em
A {\em conic-norm} on a cone $C$ is a function $\|\cdot|\hbox{\rm :}\ C\to \mathbb{R}_{+}$ such that for all
$x,y\in C$ and $r\geq 0$:\smallskip
\begin{enumerate}
\renewcommand\labelenumi{(\roman{enumi})}
\leftskip .35pc
\item $\|x+y|\leq \|x|+\|y|$,  for all $x,y \in C$ \smallskip
\item $\|x|=0\ \iff\ x=0$ \smallskip
\item $\|r\cdot x|=r\|x|$,  for all $x\in C$ and $r \geq 0$. \vspace{-.4pc}\smallskip
\end{enumerate}
\medskip
The pair $(C,\|\cdot|)$ is called \emph{normed cone}. 

\begin{remark}\em
Asymmetric normed spaces (as per Definition \ref{defasymmnorm}) are a particular case of normed cones.
\end{remark}
\end{definition}

Normed cones and asymmetric normed spaces can be regarded as {\em extended} quasi-metric spaces.
\begin{proposition}
Let $C$ be a normed cone. Then, the function
\[ d_e(x,y)=
\begin{cases}\
\|y-x|\quad \text{ if }y-x\in C\\[0.15cm]
\ +\infty\qquad\text{ otherwise}
\end{cases}
\]
\noindent defines an extended quasi-metric on $C$. Moreover, $d_e$ is a quasi-metric (with finite values) if and only if $C$ is a linear space.
\end{proposition}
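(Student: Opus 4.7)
My plan is to verify the extended quasi-metric axioms (allowing $+\infty$ values) directly from the conic norm axioms of Definition \ref{defnorm} and the cone axioms of Definition \ref{defcone}, and then treat the ``moreover'' equivalence by a short algebraic argument.

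First I would dispatch the easy axioms. For reflexivity $d_e(x,x)=0$: every cone contains the origin (as noted right after Definition \ref{defcone}), so $x-x = 0 \in C$, and axiom (ii) of Definition \ref{defnorm} gives $\|0|=0$, hence $d_e(x,x)=\|0|=0$. For the separation property, if $d_e(x,y)=d_e(y,x)=0$ then both values are finite and in particular $y-x\in C$ with $\|y-x|=0$; axiom (ii) of the conic norm forces $y-x=0$, so $x=y$.

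The main step, and the only one requiring a small case analysis due to the extended-valued nature of $d_e$, is the triangle inequality $d_e(x,z)\leq d_e(x,y)+d_e(y,z)$. If either $d_e(x,y)=+\infty$ or $d_e(y,z)=+\infty$, the right-hand side equals $+\infty$ and there is nothing to prove. Otherwise both $y-x$ and $z-y$ belong to $C$; since $C$ is closed under finite sums (Definition \ref{defcone}(i)), we obtain $z-x=(z-y)+(y-x)\in C$, so $d_e(x,z)$ is finite and equals $\|z-x|$. Subadditivity of $\|\cdot|$ (axiom (i) of Definition \ref{defnorm}) then yields
$$\|z-x| \;=\; \|(z-y)+(y-x)| \;\leq\; \|z-y|+\|y-x| \;=\; d_e(y,z)+d_e(x,y),$$
which is exactly the triangle inequality. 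I expect this case split to be the only mildly delicate point.

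For the moreover-part, one direction is immediate: if $C$ is a linear space then $y-x\in C$ for all $x,y\in C$, so $d_e(x,y)=\|y-x|<+\infty$ always. Conversely, assume $d_e$ is a (finite-valued) quasi-metric on $C$. Then $y-x\in C$ for every pair $x,y\in C$; specializing to $y=0\in C$, we get $-x\in C$ whenever $x\in C$. Combined with closure under non-negative scalar multiplication (Definition \ref{defcone}(ii)), this shows that for any $r\in\mathbb{R}$ and $x\in C$ we have $r\cdot x\in C$ (taking $r\cdot x = (-r)\cdot(-x)$ when $r<0$), and together with closure under sums this makes $C$ a linear subspace of the ambient space $E$.
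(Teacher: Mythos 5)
Your proof is correct: the paper states this proposition without proof, and your direct verification of the three quasi-metric axioms from the cone and conic-norm axioms (with the case split on infinite values for the triangle inequality, and the observation that finiteness of $d_e$ forces $-x\in C$ and hence linearity of $C$) is exactly the intended routine argument. Nothing is missing.
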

In what follows, unless stated otherwise, all topological notions on a normed cone $(C,\|\cdot|)$ will be with respect to the forward topology of the extended quasi-metric induced by $\|\cdot|$. The only space exempt from this convention will be $\mathbb{R}$, which will be assumed to carry its usual topology and metric.

For a proof of the following result we refer to Section 2.3.3 in \cite{cobzas}.

\begin{proposition}[Linear functionals over a normed cone]\label{caract}\em
Let $(C,\|\cdot|)$ be a normed cone and $\varphi:C\to \mathbb{R}$ a linear functional. Then the following are equivalent:
\begin{enumerate}
    \item[(i)] $\varphi$ is upper semicontinuous (in short, usc) from $(C,\|\cdot|)$ to $(\R, \vert \cdot \vert)$; \smallskip
    \item[(ii)] $\varphi$ is semi-Lipschitz  from $(C,\|\cdot|)$ to $(\R, d_u)$ (in short, $\varphi \in \mathrm{SLIP}(C))$;\smallskip
    \item[(iii)] there exists $M\geq 0$ such that $\varphi(x)\leq M\|x|$, for all $x\in C$.
\end{enumerate}
\end{proposition}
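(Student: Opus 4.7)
The plan is to prove this via the two equivalences (ii) $\Leftrightarrow$ (iii) and (i) $\Leftrightarrow$ (iii). The first is a direct translation of definitions, while the connection with upper semicontinuity requires a slightly more careful argument relying on positive homogeneity of the asymmetric norm.

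First I would address (ii) $\Leftrightarrow$ (iii). Recall that $d_u(a,b) = \max\{b-a,0\}$ and $d_e(x,y) = \|y-x|$ whenever $y-x\in C$. Thus $\varphi$ being semi-Lipschitz from $(C,\|\cdot|)$ to $(\R,d_u)$ with constant $L$ means exactly that $\varphi(y)-\varphi(x) \leq L\,\|y-x|$ whenever $y-x\in C$. Taking $x=0\in C$ and using linearity ($\varphi(0)=0$) yields (iii) with $M=L$. Conversely, if (iii) holds, then for any $x,y\in C$ with $y-x\in C$ linearity gives $\varphi(y)-\varphi(x)=\varphi(y-x)\leq M\|y-x|$, which is exactly semi-Lipschitzness. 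This uses the fact that linearity of $\varphi$ on $C$ means, per Definition~\ref{defcone}, that it extends linearly to $\mathrm{span}(C)=C-C$, so $\varphi(y-x)=\varphi(y)-\varphi(x)$ is well-defined.

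Next, for (iii) $\Rightarrow$ (i), I would fix $x_0\in C$ and check that $\limsup_{x\to x_0}\varphi(x)\leq \varphi(x_0)$ in the forward topology: if $d_e(x_0,x)<\varepsilon$ then $x-x_0\in C$ with $\|x-x_0|<\varepsilon$, hence $\varphi(x)-\varphi(x_0) = \varphi(x-x_0) \leq M\|x-x_0| < M\varepsilon$, giving usc at $x_0$.

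The main obstacle is the converse (i) $\Rightarrow$ (iii), which is the only direction with substantive content. I would apply upper semicontinuity at the origin (noting $\varphi(0)=0$ by linearity) to obtain $\delta>0$ such that $\varphi(x)<1$ whenever $d_e(0,x)=\|x|<\delta$. For any nonzero $x\in C$, the element $z:=\tfrac{\delta}{2\|x|}\,x$ lies in $C$ and satisfies $\|z|=\delta/2<\delta$, so $\varphi(z)<1$; by linearity this gives $\varphi(x)\leq \tfrac{2}{\delta}\|x|$, establishing (iii) with $M=2/\delta$. The key point to keep in mind throughout is that one cannot invoke the classical Hahn--Banach-style symmetric estimate since $-x$ need not belong to $C$; the argument uses only positive homogeneity and the usc hypothesis evaluated along the ray through $x$, which is all that the asymmetric setting permits.
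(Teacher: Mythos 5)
Your proof is correct: the equivalence (ii) $\Leftrightarrow$ (iii) via evaluation at $x=0$ and linearity on $\mathrm{span}(C)=C-C$, and the passage from upper semicontinuity at the origin to the global bound $\varphi(x)\leq \tfrac{2}{\delta}\|x|$ using positive homogeneity along the ray through $x$, are exactly the right ingredients, and you correctly avoid any appeal to $-x\in C$. The paper itself does not prove this proposition but defers to Section 2.3.3 of Cobza\c{s}'s book, where essentially this same standard argument appears, so your write-up simply supplies the omitted details.
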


Proposition \ref{caract} will be helpful, as it allows us to verify the upper semicontinuity of linear functionals using a simple inequality. Keeping this in mind, the dual of a normed cone is defined as follows.

\begin{definition}[Dual normed cone]\label{dual cone}\em
Let $(C,\|\cdot|)$ be a normed cone. We define the \emph{dual cone} of $C$ as
$$C^*:=\{\varphi:C\to \mathbb{R}:\,\varphi\text{  is linear and usc}\}\,=\,\{\varphi\in\mathrm{SLIP}(C):\,\varphi \text{ is linear}\}.$$
For every $\varphi\in C^*$, the \emph{dual conic-norm} is defined by
\begin{equation*}\label{dual-conic}
\|\varphi|^*:=\sup_{\|x|\leq 1} \max\{\varphi(x),0\}=\sup_{\|x|\leq 1} \varphi(x).
\end{equation*}
\end{definition}

It is easy to check that $\|\cdot|^*$ is a conic-norm on $C^*$, and that it coincides with $\mathrm{SLIP}(\cdot)$. Whenever we refer to the dual of an (extended) asymmetric normed space, it will be assumed that we are talking about dual cones in the sense of Definition \ref{dual cone}. It should be noted that infinite dimensional asymmetric normed spaces often fail to have linear duals (see \cite{BF}.)



Next, we recall the algebraic definition of a semiring: a {\em semiring} is a commutative monoid endowed with a compatible multiplication operation that distributes over the addition of the monoid.

\begin{definition}[Conic-semiring \cite{V}]\label{conicsemiring}\em
 A \em conic-semiring \em is a cone (as per Definition~\ref{defcone}) endowed with a multiplication that makes it a semiring. If the cone is endowed with a conic-norm for which there exists a constant $K\geq 0$ such that $\|fg|\leq K\|f|\|g|$ for all $f,g$ in the cone, we will call it a normed conic-semiring. A normed conic-semiring will be called unital if it has a multiplicative unit.
\end{definition}

Using this definition it is easy to check that, for any quasi-metric space $X$, the spaces $C_{SL}^{b,+}(X)$ and $D_{SL}^{b,+}(X)$ considered in Section 1.1.4 are  normed conic-semirings when endowed with their natural operations and conic-norms.

Just like the notion of cones is an asymmetric version of real linear spaces (where the additive group is reduced to a monoid and the scalars are restricted to $\mathbb{R}_+$), the notion of semiring can be seen as the asymmetric version of rings (replacing the additive group with a monoid). By combining these two ideas, we can view conic-semirings as an asymmetric version of algebras, sacrificing part of the algebraic structure of algebras in order to maintain a well defined norm. A similar result can be achieved using the opposite approach, that is, by weakening the properties of the norm in order to preserve the linear structure of algebras. 

\begin{definition}[Finite subcone]\label{finitesubcone}\em
Let $(E,\|\cdot|)$ be an extended asymmetric normed space. The subset $F=\{ x\in E ~:~ \|x|<+\infty\}$ (which is always a cone) is called the {\em finite subcone} of $E$.
\end{definition}

\begin{definition}[Extended asymmetric normed algebra \cite{V}]\label{extasymalg}\em
 An algebra $\mathcal{A}$ endowed with an extended asymmetric norm $\|\cdot |$ will be called an \textit{extended asymmetric normed algebra} if the finite subcone satisfies a submultiplicative condition for the norm, i.e., there exists $K\geq 0$ such that $\|fg|\leq K\|f|\|g|$ for all $f,g\in \mathcal{A}$ such that $\|f|,\|g|<+\infty$.
\end{definition}

Extended asymmetric normed algebras and normed conic-semirings are closely connected, as we can see in the following simple result.

\begin{proposition}
    Let $(\mathcal{A},\|\cdot|)$ be an extended asymmetric normed algebra. Then, the finite subcone of $\mathcal{A}$ is a normed conic-semiring when endowed with the norm of $\mathcal{A}$.
\end{proposition}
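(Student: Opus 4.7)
The plan is to verify directly the three requirements that make $F=\{f\in\mathcal{A}:\|f|<+\infty\}$ a normed conic-semiring in the sense of Definition \ref{conicsemiring}: namely, that $F$ is a cone, that the multiplication of $\mathcal{A}$ restricts to a semiring structure on $F$, and that $\|\cdot|$ restricted to $F$ is a submultiplicative conic-norm.

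First, I would check that $F$ is a cone (Definition \ref{defcone}) using the defining properties of an extended asymmetric norm. If $f,g\in F$, then subadditivity gives $\|f+g|\leq \|f|+\|g|<+\infty$, so $f+g\in F$. Positive homogeneity yields $\|rf|=r\|f|<+\infty$ for $r>0$, and for $r=0$ we have $rf=0\in F$, so $F$ is closed under multiplication by non-negative scalars. This already was pointed out as a general fact in Definition \ref{finitesubcone}, but it is worth making the argument explicit here.

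Next, I would show closure under multiplication, which is the whole point of introducing the submultiplicative condition in Definition \ref{extasymalg}. For $f,g\in F$ the hypothesis provides a constant $K\geq 0$ with $\|fg|\leq K\|f|\|g|<+\infty$, so $fg\in F$. The remaining semiring axioms — associativity and commutativity of addition, associativity of multiplication, distributivity, and existence of an additive identity $0\in F$ — are inherited from $\mathcal{A}$ being an algebra, since $F$ is closed under the corresponding operations.

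Finally, restricting $\|\cdot|$ to $F$ yields a function $F\to \mathbb{R}_+$ (no longer taking $+\infty$) that satisfies the three axioms of a conic-norm (Definition \ref{defnorm}): subadditivity, the equivalence $x=0\iff\|x|=0$, and positive homogeneity, all inherited from the extended asymmetric norm on $\mathcal{A}$. The submultiplicative bound $\|fg|\leq K\|f|\|g|$ on $F$ is precisely the hypothesis of Definition \ref{extasymalg}. I do not foresee any real obstacle, since the argument is a direct verification; the only point worth emphasizing is that by construction the norm takes finite values on $F$, which is what makes the restriction a genuine conic-norm rather than an extended one.
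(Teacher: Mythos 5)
Your verification is correct and complete: the finiteness of $\|f+g|$, $\|rf|$ and $\|fg|$ follows exactly as you say from subadditivity, positive homogeneity (with the $r=0$ case handled via $\|0|=0$) and the submultiplicative condition of Definition \ref{extasymalg}, while the algebraic and conic-norm axioms are inherited by restriction. The paper states this proposition as a ``simple result'' and omits the proof entirely, so your argument is precisely the routine direct verification the authors leave to the reader.
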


Under Definition~\ref{conicsemiring}, we can also also define an extended asymmetric normed algebra from a given normed conic-semiring, by simply extending the conic norm in a trivial way to the span of the cone, as the next proposition shows.

\begin{proposition}\label{conicsemiringtoalgebra}
    Let $(C,\|\cdot|)$ be a normed conic-semiring. Set $\mathcal{A}=\mathrm{span}(C)$, and for any $a\in \mathcal{A}$, define
$$
\| a |_{\mathcal{A}}=\begin{cases}
\begin{array}{ccc}
&\| a| &\text{ if }a\in C \\
&+\infty&\text{ if }a\notin C
\end{array}
\end{cases}
$$
Then, $(\mathcal{A}, \| \cdot |_{\mathcal{A}})$ is an extended asymmetric normed algebra, and the finite subcone of $\mathcal{A}$ coincides with $C$. In this case, we will say that the extended asymmetric normed algebra $\mathcal{A}$ is \textit{generated} by the normed conic-semiring $C$.
\end{proposition}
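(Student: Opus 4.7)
The plan is to verify in turn each clause of Definition \ref{extasymalg}, namely that $\mathcal{A}$ is an algebra, that $\|\cdot|_{\mathcal{A}}$ is an extended asymmetric norm, and that submultiplicativity holds on the finite part. First I would address the algebra structure of $\mathcal{A}=\mathrm{span}(C)=C-C$. Every $a\in\mathcal{A}$ admits a (non-unique) decomposition $a=c_1-c_2$ with $c_i\in C$, and I extend the semiring product of $C$ to $\mathcal{A}$ via distributivity by setting
\[
(c_1-c_2)(c_3-c_4):=(c_1c_3+c_2c_4)-(c_1c_4+c_2c_3),
\]
which lies in $C-C=\mathcal{A}$. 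Associativity, distributivity and bilinearity then follow formally from the semiring axioms on $C$, so $\mathcal{A}$ inherits a genuine algebra structure.

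Next I would check the three axioms of an (extended) asymmetric norm for $\|\cdot|_{\mathcal{A}}$. Positive homogeneity is a short case split: if $a\in C$ and $r>0$, then $ra\in C$ and $\|ra|_{\mathcal{A}}=r\|a|_{\mathcal{A}}$ by the homogeneity of $\|\cdot|$; if $a\notin C$, then $ra\notin C$ (otherwise $a=r^{-1}(ra)\in C$), so both sides equal $+\infty$. The zero axiom is immediate from $0\in C$ together with condition (ii) of Definition \ref{defnorm}. For subadditivity, when both $a,b\in C$ the sum $a+b$ lies in $C$ and the inequality reduces to the one on $C$; when at least one of $a,b$ does not lie in $C$, the right-hand side is $+\infty$ and the inequality is vacuous.

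Finally, for the submultiplicative bound in Definition \ref{extasymalg}, finiteness of $\|a|_{\mathcal{A}}$ and $\|b|_{\mathcal{A}}$ forces $a,b\in C$; then $ab\in C$ because $C$ is closed under multiplication, and
\[
\|ab|_{\mathcal{A}}=\|ab|\leq K\|a|\|b|=K\|a|_{\mathcal{A}}\|b|_{\mathcal{A}}
\]
is inherited directly from the normed conic-semiring structure. The identification of the finite subcone of $\mathcal{A}$ with $C$ is then just a rereading of the definition of $\|\cdot|_{\mathcal{A}}$. The only mild subtlety in the whole argument is the well-definedness of the extended product: if $c_1-c_2=c_1'-c_2'$ and $c_3-c_4=c_3'-c_4'$, one must check that the formula above does not depend on the choice of representatives, which reduces (via distributivity on $C$) to an identity between elements of $\mathrm{span}(C)$ that holds because the additive relation $c_1+c_2'=c_1'+c_2$ is cancellative in the ambient vector space. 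Everything else is a routine case analysis.
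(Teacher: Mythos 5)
Your proof is correct and follows essentially the same route as the paper: the key step in both is the four-term expansion $(c_1-c_2)(c_3-c_4)=(c_1c_3+c_2c_4)-(c_1c_4+c_2c_3)\in C-C$, with the norm axioms and submultiplicativity then following by the same routine case analysis. Your extra care about well-definedness of the extended product is a sensible addition for a fully abstract cone, though the paper sidesteps it by working only with cones of real-valued functions, where the product is already defined on the ambient space $\mathbb{R}^X$.
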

\begin{proof}
Let us verify that $\mathcal{A}$ is an algebra. Since it is by definition a linear space, we only need to check that it is closed under multiplication. Let $x,y\in \mathcal{A}$. Since $C$ is a cone, $\mathrm{span}(C)$ can be written as $C-C=\{c_1 - c_2 ~:~ c_1,c_2\in C\}$, so we can write
$$xy= (x_1-x_2)(y_1-y_2)=x_1y_1-x_1y_2-x_2y_1+x_2y_2=(x_1y_1+x_2y_2) - (x_1y_2 +x_2y_1),$$
\noindent with $x_i$ and $y_i$ in $C$, for $i=\{1,2\}$. It follows that $xy\in C-C=\mathcal{A}$. The remaining properties hold by definition.
\end{proof}

It is worth mentioning that the notions of cones and conic-semirings can be formulated in an abstract way, which does not require for the cones to be contained in a linear space. This more general framework will not be needed here, as the main result of this Section deals with cones of real valued functions defined on a given quasi-metric space $X$, which are always contained in the linear space $\mathbb{R}^X$. For more details on the abstract version of these notions, we refer the reader to Section 2.1.3 and Section 5.1.2 of \cite{tesisFV}, as well as to Section 3.1 of \cite{V}.

\begin{definition}\label{continuityextnorm}\em
A linear map $T$ between two extended asymmetric normed spaces $(\mathcal{A}_2,\|\cdot|_2)$ and $(\mathcal{A}_1,\|\cdot|_1)$ is said to be {\em continuous} if there exists a constant $K\geq 0$ such that $$\|Tf|_1\leq K\|f|_2,$$
\noindent for all $f\in \mathcal{A}_2$. The least constant $K$ satisfying the inequality above is called the norm of $T$, denoted by $\|T|$.
\end{definition}

Note that the above definition is equivalent to the usual forward-forward continuity for the respective asymmetric extended norms (see Propositions 2.2.1 and 2.2.2 of \cite{cobzas} for the proof in the case of asymmetric normed spaces, which can also be applied to extended norms).

Note also that a continuous linear map necessarily sends the finite subcone of its domain into the finite subcone of its range.

\begin{definition}\label{extasymiso}\em
Given two extended asymmetric normed algebras $(\mathcal{A}_1,\|\cdot|_1)$ and $(\mathcal{A}_2,\|\cdot|_2)$, a mapping $T:\mathcal{A}_2\to \mathcal{A}_1$ is called an \textit{extended asymmetric normed algebra isomorphism} provided:
\begin{enumerate}
    \item[(i)] $T$ is linear and bijective,
    \item[(ii)] $T$ is bicontinuous, i.e., $T$ and $T^{-1}$ are continuous in the sense of Definition~\ref{continuityextnorm}.
    \item[(iii)] $T(fg)=Tf\cdot Tg$ for all $f,g\in \mathcal{A}_2.$
\end{enumerate}
The isomorphism $T$ is called an extended asymmetric normed algebra \textit{isometry} if $$\|Tf|_1=\|f|_2$$
\noindent for all $f\in \mathcal{A}_2$, or equivalently, if $\|T|=\|T^{-1}|=1$.
\end{definition}

We will also need the following definition.

\begin{definition}[Positive isomorphism]\em
Let $\mathcal{A}(X)$ and $\mathcal{A}(Y)$ be extended asymmetric normed algebras of real-valued functions over sets $X$ and $Y$, respectively. A mapping $T:\mathcal{A}(Y)\to \mathcal{A}(X)$ is called {\em positive} if  $Tf\geq 0$ whenever $f\geq 0$. In the case that $T$ is an isomorphsm, we say that $T$ is {\em order preserving} if $T$ and $T^{-1}$ are both positive.
\end{definition}

\subsection{Topological version}\label{subsecbstopo}

Before stating our main result, we fix some notation. If $X$ and $Y$ are topological spaces, $C(X, Y)$ denotes as usual the space of continuous mappings between $X$ and $Y$.  Furthermore, $C^{b,+}(X)$ will denote the space of bounded, non-negative, continuous real functions on $X$, where the real line $\R$ is endowed with its usual metric. 
\begin{theorem}\label{BSt}
Let $(X,d_X)$ and $(Y,d_Y)$ be complete metric spaces, let $\mathcal{F}(X,Y)$ be a subset of $C(X,Y)$, and let $\mathcal{G}(X)$ and $\mathcal{G}(Y)$ be subcones of $C^{b,+}(X)$ and $C^{b,+}(Y)$, respectively, such that:
\begin{itemize}
     \item[(i)] $\mathcal{G}(X)$ and $\mathcal{G}(Y)$ are endowed with conic norms, which are finer than $\|\cdot\|_\infty$, and which make them into unital normed conic-semirings under the usual addition and multiplication of real-valued functions.
    \item[(ii)] $h\in\mathcal{F}(X,Y) $ provided $f\circ h\in \mathcal{G}(X)$ for all $f\in \mathcal{G}(Y)$.
    \item[(ii')] $h\in\mathcal{F}(Y,X) $ provided $f\circ h\in \mathcal{G}(Y)$ for all $f\in \mathcal{G}(X)$.
    \item[(iii)] For $Z\in \{X,Y\}$, $\mathcal{G}(Z)$ is uniformly separating for $(Z,d_Z)$, in the sense that, for every pair of subsets $A$ and $B$ of $Z$ with $d_Z(A,B)>0$, there exists some $f\in \mathcal{G}(Z)$ such that $\overline{f(A)}^{|\cdot|} \cap \overline{f(B)}^{|\cdot|} =\emptyset$.
\end{itemize}
\noindent For $Z\in\{X,Y\}$, denote $\mathcal{A}(Z)=\mathrm{span}(\mathcal{G}(Z))$, endowed with the extended asymmetric norm induced by $\mathcal{G}(Z)$ and its natural algebra structure. If $T:\mathcal{A}(Y)\to \mathcal{A}(X)$ is an order-preserving isomorphism of extended asymmetric normed algebras, there exists a bijection $\tau\in \mathcal{F}(X,Y)$ with $\tau^{-1}\in \mathcal{F}(Y,X)$, and such that
$$Tf=f\circ \tau$$
for all $f\in \mathcal{A}(Y)$.
\end{theorem}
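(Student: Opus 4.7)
The plan is to follow the classical Banach-Stone strategy via the structure space of multiplicative characters, adapted to the present setting of extended asymmetric normed algebras generated by conic-semirings in the spirit of \cite{V}. The idea is that the algebra isomorphism $T$ induces, by transposition, a bijection between the structure spaces of $\mathcal{A}(Y)$ and $\mathcal{A}(X)$, which under the hypotheses corresponds to a point-map $\tau\colon X\to Y$ realizing $T$ as the composition operator $f\mapsto f\circ\tau$. As a first step I would define $\mathcal{M}(X)$ to be the set of nonzero, positive, multiplicative, continuous linear functionals $\varphi\colon\mathcal{A}(X)\to\mathbb{R}$, and embed $X$ into $\mathcal{M}(X)$ by point evaluation $x\mapsto\delta_x$, where $\delta_x(f)=f(x)$. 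Linearity, multiplicativity and positivity of $\delta_x$ are immediate; continuity follows from (i) via $|\delta_x(f)|\leq\|f\|_\infty\leq C\,\|f|_{\mathcal{A}}$ for $f\in\mathcal{G}(X)$, and is trivial for $f\notin\mathcal{G}(X)$ (where $\|f|_{\mathcal{A}}=+\infty$). Injectivity of $x\mapsto\delta_x$ follows by applying (iii) to singletons $\{x\},\{y\}$ with $x\neq y$: since $d_X(x,y)>0$, some $f\in\mathcal{G}(X)$ separates them. The analogous embedding of $Y$ into $\mathcal{M}(Y)$ holds.

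The main technical step is to show that $x\mapsto\delta_x$ is also surjective onto $\mathcal{M}(X)$. Given $\varphi\in\mathcal{M}(X)$, I would consider the family of closed sets
\[
E_{f,\varepsilon}:=\{x\in X:|f(x)-\varphi(f)|\leq\varepsilon\},\qquad f\in\mathcal{G}(X),\ \varepsilon>0,
\]
and prove (a) that this family has the finite intersection property, and (b) that its total intersection reduces to a single point $x_\varphi$, which then necessarily satisfies $\varphi=\delta_{x_\varphi}$. For (a), multiplicativity of $\varphi$ combined with the unit of $\mathcal{G}(X)$ and the submultiplicative inequality implies that elements built from sums of $(f_i-\varphi(f_i)\cdot 1)^2$ lie in the kernel of $\varphi$, which prevents them from being bounded below uniformly on $X$; this yields points where all $f_i$ are simultaneously close to $\varphi(f_i)$. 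For (b), completeness of $(X,d_X)$ combined with (iii) rules out two distinct accumulation candidates: any two points at positive distance would, by (iii), be separated by some $f\in\mathcal{G}(X)$, preventing both from belonging to $E_{f,\varepsilon}$ for small $\varepsilon$. This is where the proof is most delicate --- translating the abstract algebraic properties of $\varphi$ into the topological statement that the level sets shrink to a single point of $X$ --- and this is where the \emph{uniform} form of the separation hypothesis (iii), rather than mere pointwise separation, together with the completeness of $X$, becomes indispensable. The same identification applies to $Y$ and $\mathcal{M}(Y)$.

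With the identifications $X\cong\mathcal{M}(X)$ and $Y\cong\mathcal{M}(Y)$ in hand, the conclusion is routine. For each $x\in X$, the pullback $\delta_x\circ T\colon\mathcal{A}(Y)\to\mathbb{R}$ is nonzero (since $T$ is invertible), positive (since $T$ is order-preserving), multiplicative (since $T$ is an algebra homomorphism) and continuous (since $T$ is continuous), so $\delta_x\circ T\in\mathcal{M}(Y)$. By the previous step there is a unique $\tau(x)\in Y$ with $Tf(x)=f(\tau(x))$ for every $f\in\mathcal{A}(Y)$. Applying the same procedure to $T^{-1}$ yields a map $\sigma\colon Y\to X$, and composing with $T$ together with the injectivity of $y\mapsto\delta_y$ shows $\sigma=\tau^{-1}$. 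Finally, for any $f\in\mathcal{G}(Y)$ the continuity of $T$ sends $f$ into the finite subcone of $\mathcal{A}(X)$, which is $\mathcal{G}(X)$; thus $f\circ\tau=Tf\in\mathcal{G}(X)$, and hypothesis (ii) yields $\tau\in\mathcal{F}(X,Y)$. Symmetrically, (ii') applied via $T^{-1}$ gives $\tau^{-1}\in\mathcal{F}(Y,X)$, which completes the proof.
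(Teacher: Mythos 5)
Your overall strategy (structure space, transpose of $T$, evaluation functionals) is the right framework and matches the paper's, but the central technical claim of your second paragraph --- that $x\mapsto\delta_x$ is \emph{surjective} onto the set $\mathcal{M}(X)$ of positive multiplicative continuous characters --- is false under the stated hypotheses, and the argument you sketch for it cannot be repaired. Take $X=\mathbb{N}$ with the discrete metric (complete) and $\mathcal{G}(X)=C^{b,+}(X)$ with the sup norm; all hypotheses (i)--(iii) hold (this is exactly case (a) of Corollary \ref{examplesbst}), and $\mathcal{A}(X)=\ell^\infty$, whose space of positive multiplicative continuous characters is $\beta\mathbb{N}$, which is vastly larger than $\mathbb{N}$. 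Concretely, your step (b) breaks down because the family $E_{f,\varepsilon}$ has the finite intersection property but its \emph{total} intersection can be empty: for a character induced by a free ultrafilter, no point of $X$ lies in all the sets $E_{f,\varepsilon}$. Completeness plus uniform separation rules out the intersection containing \emph{two} points, but it does not produce a Cauchy sequence and hence cannot guarantee the intersection is nonempty; that would require compactness or some countability of the data.

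The paper circumvents this by never claiming $\delta(X)=\mathcal{S}(X)$. Instead it proves (Lemma \ref{superlema}) that the evaluation functionals are exactly the points of the structure space admitting a \emph{countable neighborhood basis} for the trace of the product topology: if $\varphi\notin\delta(X)$ had one, density of $\delta(X)$ in $\mathcal{S}(X)$ would give a sequence $\delta_{x_n}\to\varphi$; completeness forces $(x_n)$ to have a uniformly separated subsequence, and the uniform separation hypothesis (iii) applied to the odd- and even-indexed parts of that subsequence yields a contradiction. Since first countability is a purely topological property and $T^*$ restricts to a homeomorphism between the structure spaces (it preserves positivity and multiplicativity because $T$ is an order-preserving algebra isomorphism), $T^*$ must carry $\delta(X)$ onto $\delta(Y)$, which is all that is needed to define $\tau$. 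Your final paragraph (definition of $\tau$, bijectivity, and the use of (ii) and (ii') via the finite subcone) is correct and agrees with the paper, but as written your proof has a genuine gap at the identification $X\cong\mathcal{M}(X)$, which is the heart of the matter.
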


\begin{remark}\em
In this result, the set $\mathcal{F}(X,Y)$ has the role of ``space of morphisms'' between $X$ and $Y$. The function spaces $\mathcal{G}(X)$ and $\mathcal{G}(Y)$ need not be of the ``same nature'' as $\mathcal{F}(X,Y)$, and this will often be the case in our examples (see forthcoming Corollary~\ref{examplesbst}).
\end{remark}

Let us begin with some preliminaries for the proof of Theorem~\ref{BSt}. Thanks to Proposition~\ref{conicsemiringtoalgebra}, we know that $\mathcal{A}(X)$ and $\mathcal{A}(Y)$ are extended asymmetric normed algebras. Therefore, we can readily define the \emph{structure space} by
$$
\mathcal{S}(X):=\{\varphi:\mathcal{A}(X)\to \mathbb{R}~:~\varphi\text{ is linear, multiplicative, continuous and positive}\}\subset \mathcal{A}(X)^*
$$

\begin{remark}\label{caractcont}\em
Every linear, multiplicative and upper semicontinuous functional $\varphi$ is actually continuous. To see this, it suffices to show that $-\varphi$ is upper semicontinuous, for which we can use Proposition \ref{caract}. We need to given a bound for $-\varphi(f)$ for any $f\in \mathcal{A}(X)$. By denoting the constant function of value $1$ as $\mathds{1}$, we have
$$-\varphi(f)=\varphi(-\mathds{1})\varphi(f)\leq \varphi(-\mathds{1})\|f|\|\varphi|^*\leq K \|f|.$$
As a consequence, an equivalent definition of the structure space $\mathcal{S}(X)$ could be given by requiring each functional to be usc instead of continuous.
\end{remark}

\begin{proposition}\label{deltainS}
The set of evaluation functionals $\delta(X)=\{\delta_x:\mathcal{A}(X)\to\R~:x\in X\}$ is contained in $\mathcal{S}(X)$.
\end{proposition}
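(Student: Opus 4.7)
The plan is to verify each of the four defining properties of $\mathcal{S}(X)$ for the evaluation $\delta_x$. Linearity, multiplicativity, and positivity are all immediate from the pointwise definition of the algebraic operations and order in $\mathcal{A}(X)$: for any $f,g\in \mathcal{A}(X)$ and $r,s\in\R$ we have $\delta_x(rf+sg)=rf(x)+sg(x)$, $\delta_x(fg)=f(x)g(x)=\delta_x(f)\delta_x(g)$, and if $f\geq 0$ pointwise on $X$ then $\delta_x(f)=f(x)\geq 0$. None of these require any hypothesis on $\mathcal{G}(X)$ beyond the fact that its elements are honest real-valued functions whose algebra structure is inherited pointwise.

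The only step that carries any content is the continuity of $\delta_x$ with respect to the extended asymmetric norm $\|\cdot|_{\mathcal{A}}$ on $\mathcal{A}(X)$. By Definition~\ref{continuityextnorm} I need to exhibit a constant $K\geq 0$ such that $\delta_x(f)\leq K\,\|f|_{\mathcal{A}}$ for every $f\in\mathcal{A}(X)$. By hypothesis~(i), the conic norm $\|\cdot|$ on $\mathcal{G}(X)$ is finer than $\|\cdot\|_\infty$, which means there exists $K\geq 0$ such that $\|f\|_\infty\leq K\,\|f|$ for every $f\in\mathcal{G}(X)$. Now split into two cases. If $f\in\mathcal{G}(X)$ (in particular $f\geq 0$), then
$$
\delta_x(f)=f(x)\leq \|f\|_\infty\leq K\,\|f|=K\,\|f|_{\mathcal{A}}.
$$
If $f\in\mathcal{A}(X)\setminus\mathcal{G}(X)$, then by the definition of the extended norm in Proposition~\ref{conicsemiringtoalgebra} we have $\|f|_{\mathcal{A}}=+\infty$, so the inequality $\delta_x(f)\leq K\,\|f|_{\mathcal{A}}$ holds trivially (since $\delta_x(f)=f(x)\in\R$).

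I do not anticipate any real obstacle: the argument is essentially bookkeeping against the definitions, and the single nontrivial ingredient, the comparison $\|f\|_\infty\leq K\|f|$ on $\mathcal{G}(X)$, is built into hypothesis~(i). The only subtlety worth flagging is that continuity has to be checked against the asymmetric bound $\delta_x(f)\leq K\,\|f|_{\mathcal{A}}$ rather than $|\delta_x(f)|\leq K\,\|f|_{\mathcal{A}}$, so the sign of $\delta_x(f)$ on elements of $\mathcal{A}(X)\setminus\mathcal{G}(X)$ is irrelevant; this is precisely what the extended value $+\infty$ absorbs.
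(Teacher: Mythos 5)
Your proposal is correct and follows the same basic line as the paper: the algebraic properties are immediate, and the quantitative ingredient is that the conic norm on $\mathcal{G}(X)$ dominates $\|\cdot\|_\infty$, with the extended value $+\infty$ absorbing everything outside $\mathcal{G}(X)$. The one place where you diverge is in how full continuity (as opposed to mere upper semicontinuity) is obtained. The paper first deduces upper semicontinuity from Proposition~\ref{caract} via the one-sided bound $\delta_x(f)\leq K\|f|$, and then upgrades to continuity using Remark~\ref{caractcont}, which exploits multiplicativity and the unit $\mathds{1}$ to bound $-\delta_x(f)$ as well. Your closing remark, that continuity only needs the one-sided estimate $\delta_x(f)\leq K\|f|_{\mathcal A}$, is not quite the paper's convention: within their framework that inequality characterizes upper semicontinuity, and Remark~\ref{caractcont} exists precisely because continuity into $(\R,|\cdot|)$ is a two-sided condition. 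Your argument nevertheless delivers the two-sided bound for free, since every $f\in\mathcal{G}(X)$ is non-negative, so $|\delta_x(f)|=f(x)\leq\|f\|_\infty\leq K\|f|$ on the finite subcone and the bound is vacuous off it. So the proof stands; only the framing of that last subtlety should be adjusted (or replaced by the paper's appeal to Remark~\ref{caractcont}, which works for arbitrary multiplicative functionals rather than relying on positivity of the cone's elements).
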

\begin{proof}
As usual, the evaluation functional $\delta_x$ is defined by $\delta_x(f)=f(x)$. It is clear that every $\delta_x$ is linear and positive on $\mathcal{A}(X)$. Upper semi continuity is deduced using Proposition \ref{caract} and the fact that the extended asymmetric norm on $\mathcal{A}(X)$ is finer than the supremum norm and continuity follows from Remark~\ref{caractcont}.
\end{proof}

\begin{proposition}\label{homeo}
$(X,d_X)$ is homeomorphic to $(\delta(X),\tau_p)$, where $\tau_p$ denotes the trace of the product topology of $\mathbb{R}^{\mathcal{A}(X)}$.
\end{proposition}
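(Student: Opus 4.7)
The plan is to show that the natural map $\phi\colon X\to \delta(X)$, defined by $\phi(x)=\delta_x$, is a homeomorphism when $\delta(X)$ carries the product topology $\tau_p$ inherited from $\mathbb{R}^{\mathcal{A}(X)}$. The argument splits into three routine pieces plus one step where the uniform separation hypothesis (iii) is truly used.

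First, I would establish that $\phi$ is a bijection. Surjectivity is tautological from the definition of $\delta(X)$. For injectivity, given $x\neq y$ one has $d_X(x,y)>0$, so applying hypothesis (iii) with $A=\{x\}$ and $B=\{y\}$ yields some $f\in \mathcal{G}(X)\subset \mathcal{A}(X)$ with $\overline{\{f(x)\}}\cap \overline{\{f(y)\}}=\emptyset$, i.e.\ $f(x)\neq f(y)$, so $\delta_x\neq \delta_y$.

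Next, continuity of $\phi\colon X\to (\delta(X),\tau_p)$ is immediate from the universal property of the product topology: it suffices to check that for each $f\in \mathcal{A}(X)$ the composition $x\mapsto \delta_x(f)=f(x)$ is continuous, which holds because $\mathcal{A}(X)=\mathrm{span}(\mathcal{G}(X))\subset C(X,\mathbb{R})$.

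The main step, and the place where hypothesis (iii) plays its genuine role, is the continuity of $\phi^{-1}$. Fix $x_0\in X$ and $r>0$; I want to exhibit a $\tau_p$-neighborhood $V$ of $\delta_{x_0}$ in $\delta(X)$ with $\phi^{-1}(V)\subset B_{d_X}(x_0,r)$. Set $A=\{x_0\}$ and $B=X\setminus B_{d_X}(x_0,r)$, so $d_X(A,B)\geq r>0$. By (iii) there exists $f\in \mathcal{G}(X)$ with $\overline{f(A)}^{|\cdot|}\cap\overline{f(B)}^{|\cdot|}=\emptyset$; hence $\varepsilon:=\mathrm{dist}(f(x_0),\overline{f(B)})>0$. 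Taking
\[
V=\{\delta_x\in\delta(X)\,:\,|f(x)-f(x_0)|<\varepsilon\},
\]
one has $\delta_x\in V\Rightarrow f(x)\notin \overline{f(B)}\supseteq f(B)\Rightarrow x\notin B\Rightarrow d_X(x_0,x)<r$, as required. Since balls form a base for the topology of the metric space $(X,d_X)$, this gives continuity of $\phi^{-1}$, completing the proof.

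The only nontrivial ingredient is condition (iii): without it, the family $\mathcal{G}(X)$ would not necessarily detect the metric topology of $X$, and the inverse of $\phi$ could fail to be continuous. The completeness assumption on $(X,d_X)$ and the semiring/algebra structure of $\mathcal{G}(X)$ and $\mathcal{A}(X)$ are not needed here; they enter only in the subsequent identification of $\delta(X)$ with the structure space $\mathcal{S}(X)$.
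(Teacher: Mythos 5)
Your proposal is correct and follows essentially the same route as the paper: continuity of $x\mapsto\delta_x$ from continuity of the functions in $\mathcal{A}(X)$, and openness of the map (equivalently, continuity of its inverse) by applying the uniform separation hypothesis (iii) to a point and the complement of a neighborhood, extracting an $\varepsilon>0$ and a basic $\tau_p$-neighborhood determined by the separating function. The only differences are cosmetic — you work with balls rather than arbitrary open sets and spell out injectivity explicitly — and your closing remark that completeness is not needed here is accurate.
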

\begin{proof}
We start by noting that, since $\mathcal{A}(X)$ separates points and closed sets of $X$, this proposition is in fact a well known result in general topology (see for instance Theorem 8.16 of \cite{willard}). Nevertheless, we include the proof for the sake of completeness. We start by proving that $\delta: X\to \delta(X)$ is open. Let $U\subseteq X$ be an open set, and fix a point $\delta_x\in \delta(U)$. Since $x$ does not belong to the closed set $U^c$, we can use hypothesis $(iii)$ of Theorem~\ref{BSt} to obtain a function $f\in \mathcal{G}(X)$ that separates $\{x\}$ from $U^c$, that is, $f(x)\notin \overline{f(U^c)}$, which implies the existence of $\varepsilon>0$ such that $B(f(x),\varepsilon)\cap f(U^c)=\emptyset$. Consider now the $\tau_p$-neighborhood of $0$ defined by the separating function $f$ and the radius $\varepsilon$ of the ball, that is, $W=\{ \delta_z \in \mathcal{S}(X):~|\delta_z (f)|<\varepsilon\}$. Then, the set $\delta_x + W$ is a $\tau_p$-neighborhood of $\delta_x$ contained in $\delta(U)$. On the other hand, continuity of the mapping $\delta$ follows directly from the fact that the functions in $\mathcal{G}(X)$ (and therefore $\mathcal{A}(X)$) are continuous.
\end{proof}

\begin{remark}\em
    It is worth noting that an asymmetric normed space $E$ is not in general a topological vector space, as multiplication by scalars may fail to be continuous at points of the form $(0,x)\in \R\times E$. Fortunately, addition remains continuous, and therefore, translations of open sets are still open.
\end{remark}

\begin{proposition}\label{denso}
$\delta(X)$ is $\tau_p$-dense in $\mathcal{S}(X)$.
\end{proposition}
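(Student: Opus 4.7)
The plan is to prove density by showing that, for every $\varphi\in\mathcal{S}(X)$, every basic $\tau_p$-neighborhood of $\varphi$ contains some evaluation functional $\delta_x$. Fix such a $\varphi$, finitely many $f_1,\ldots,f_n\in\mathcal{A}(X)$ and $\varepsilon>0$, and form the basic neighborhood
$$
V=\{\psi\in\mathbb{R}^{\mathcal{A}(X)}\,:\,|\psi(f_i)-\varphi(f_i)|<\varepsilon,\ i=1,\ldots,n\}.
$$
Since $\mathcal{A}(X)$ is unital and $\varphi$ is a multiplicative linear functional, $\varphi(\mathds{1})^2=\varphi(\mathds{1})$, so $\varphi(\mathds{1})\in\{0,1\}$; the value $0$ would force $\varphi\equiv 0$, which is discarded as a non-character, hence $\varphi(\mathds{1})=1$.

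The core of the argument is the classical ``sum of squares'' trick. Set $g_i:=f_i-\varphi(f_i)\mathds{1}\in\mathcal{A}(X)$, so that $\varphi(g_i)=0$. Since $\mathcal{A}(X)$ is an algebra (by Proposition~\ref{conicsemiringtoalgebra}), the function
$$
g:=\sum_{i=1}^n g_i^2
$$
also belongs to $\mathcal{A}(X)$, and by linearity and multiplicativity $\varphi(g)=\sum_{i=1}^n\varphi(g_i)^2=0$. Suppose for contradiction that $V\cap\delta(X)=\emptyset$; then for every $x\in X$ there exists some index $i$ with $|g_i(x)|\geq\varepsilon$, so $g(x)\geq\varepsilon^2$ on all of $X$, i.e., $g-\varepsilon^2\mathds{1}\geq 0$ as an element of $\mathcal{A}(X)$. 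Positivity of $\varphi$ then yields $\varphi(g)\geq\varepsilon^2\varphi(\mathds{1})=\varepsilon^2>0$, contradicting $\varphi(g)=0$. Therefore some $x\in X$ makes $\delta_x\in V$, and the density of $\delta(X)$ in $\mathcal{S}(X)$ follows.

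The main subtlety is making sure that the sum of squares $g$ is produced inside the full algebra $\mathcal{A}(X)$: the cone $\mathcal{G}(X)$ need not be closed under the subtraction $f_i-\varphi(f_i)\mathds{1}$ or under arbitrary squaring, so the passage to the linear span is essential. Once $g-\varepsilon^2\mathds{1}\in\mathcal{A}(X)$ is pointwise nonnegative, the positivity hypothesis on $\varphi$ applies directly and yields the contradiction with $\varphi(g)=0$. It is worth noting that neither completeness of $(X,d_X)$ nor the uniform-separation property (iii) is used in this step; both enter only later, when one wants to upgrade density to an identification $\delta(X)=\mathcal{S}(X)$.
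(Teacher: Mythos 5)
Your proof is correct and follows essentially the same route as the paper: the sum-of-squares element $g=\sum_i (f_i-\varphi(f_i)\mathds{1})^2\in\mathcal{A}(X)$ with $\varphi(g)=0$, contradicted by positivity once $g\geq\varepsilon^2$ everywhere on $X$. If anything, your write-up is slightly more careful than the paper's, which states the bound as $g(x)\geq n\varepsilon^2$ (only $\varepsilon^2$ is guaranteed, since just one index need fail at each $x$) and leaves implicit the step $\varphi(g)\geq\varepsilon^2\varphi(\mathds{1})>0$ that you spell out using the unit.
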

\begin{proof}
Consider a basic $\tau_p$-neighborhood of a function $\varphi\in \mathcal{S}(X)$ and $f_1,f_2,\cdots,f_n\in \mathcal{A}(X)$:
$$W=\{\psi \in \mathcal{S}(X):~|\psi(f_i)-\varphi(f_i)|<\varepsilon,~i=1,...,n\}.$$
Assume that $W\cap \delta(X)=\emptyset$, and consider the function  $g=\sum_{i=1}^n (f_i-\varphi(f_i))^2\in \mathcal{A}(X)$. Then, $g(x)\geq n\varepsilon^2>0$ and $\varphi(g)=0$, which contradicts the positivity of $\varphi$. Therefore, $W\cap\delta(X)\neq\emptyset$ and $\delta(X)$ is  $\tau_p$-dense in $\mathcal{S}(X)$.
\end{proof}

\begin{lemma}\label{superlema}
The following are equivalent:
\begin{itemize}
    \item[$(a)$] $\varphi\in \mathcal{S}(X)$ has a countable neighborhood basis.
    \item[$(b)$] There exists $x\in X$ such that $\varphi=\delta_x$
\end{itemize}
\end{lemma}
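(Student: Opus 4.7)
The direction $(b) \Rightarrow (a)$ should be the easier one. Since $(X,d_X)$ is a metric space, the point $x$ has a countable neighborhood basis $\{B(x,1/n)\}_{n \in \mathbb{N}}$. For each $n$, since $d_X(\{x\}, X\setminus B(x,1/n)) \geq 1/n > 0$, property (iii) of Theorem \ref{BSt} yields a separating function $f_n \in \mathcal{G}(X)$ and $\varepsilon_n > 0$ such that $|f_n(y) - f_n(x)| \geq \varepsilon_n$ for $y \notin B(x,1/n)$. I would use the countable collection of $\tau_p$-open sets
\[
V_{n_1,\ldots,n_k;\,q} = \{\psi \in \mathcal{S}(X)\ :\ |\psi(f_{n_i})-f_{n_i}(x)|<q,\ i=1,\ldots,k\},
\]
with $n_1,\ldots,n_k \in \mathbb{N}$ and $q \in \mathbb{Q}_+$, together with the homeomorphism $\delta: X \to (\delta(X), \tau_p)$ of Proposition \ref{homeo}, to exhibit a countable neighborhood basis of $\delta_x$ in $\mathcal{S}(X)$.

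The main direction is $(a) \Rightarrow (b)$, which I would prove as follows. Let $\varphi \in \mathcal{S}(X)$ have a countable neighborhood basis, which we may assume is decreasing: $W_1 \supseteq W_2 \supseteq \ldots$ By Proposition \ref{denso}, $\delta(X)$ is $\tau_p$-dense in $\mathcal{S}(X)$, so for each $n$ we can pick $x_n \in X$ with $\delta_{x_n} \in W_n$. Then $\delta_{x_n} \to \varphi$ in $\tau_p$; equivalently, $f(x_n) \to \varphi(f)$ in $\mathbb{R}$ for every $f \in \mathcal{A}(X)$, and in particular for every $f \in \mathcal{G}(X)$.

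The key claim is that $(x_n)$ is Cauchy in $(X, d_X)$. I would argue by contradiction: if not, by iterative extraction one can pass to a subsequence (still denoted $(x_n)$) satisfying $d_X(x_i, x_j) \geq \varepsilon$ for all $i \neq j$, for some fixed $\varepsilon > 0$. Splitting this subsequence into its even- and odd-indexed terms gives disjoint subsets $A, B \subseteq X$ with $d_X(A,B) \geq \varepsilon > 0$. By hypothesis (iii), there exists $f \in \mathcal{G}(X) \subseteq \mathcal{A}(X)$ with $\overline{f(A)}^{|\cdot|} \cap \overline{f(B)}^{|\cdot|} = \emptyset$, so the sequence $(f(x_n))$ alternates between two disjoint closed sets of $\mathbb{R}$ and cannot converge in $\mathbb{R}$. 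This contradicts $f(x_n) = \delta_{x_n}(f) \to \varphi(f)$. Hence $(x_n)$ is Cauchy, and by completeness of $(X,d_X)$ it converges to some $x \in X$. Since every $f \in \mathcal{A}(X)$ is continuous on $X$, we get $\delta_{x_n} \to \delta_x$ in $\tau_p$; combined with $\delta_{x_n} \to \varphi$ and the Hausdorffness of $\tau_p$ on $\mathcal{S}(X)$ (a consequence of $\mathcal{G}(X)$ separating points via (iii)), we conclude $\varphi = \delta_x$.

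The main technical obstacle is the extraction step: from "$(x_n)$ is not Cauchy" one a priori only obtains pairs at distance $\geq \varepsilon$, not a subsequence of pairwise $\varepsilon$-separated points. I would handle this by a standard diagonal-style argument: iteratively pick $x_{n_k}$ far from all previously selected points, using that the failure of the Cauchy property guarantees infinitely many such choices are available at each step. Once this is achieved, the split into two "sufficiently separated" subsets and the invocation of property (iii) close the contradiction cleanly.
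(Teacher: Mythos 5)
Your overall strategy for $(a)\Rightarrow(b)$ coincides with the paper's: use the density of $\delta(X)$ (Proposition \ref{denso}) to produce a sequence $(x_n)$ with $\delta_{x_n}\to\varphi$ in $\tau_p$, extract an $\varepsilon$-separated subsequence, split it into two halves $A$ and $B$ with $d_X(A,B)\geq\varepsilon$, and contradict the convergence of $(f(x_n))$ via the uniform separation hypothesis (iii). However, there is a genuine gap at precisely the step you flag as the main technical obstacle. From ``$(x_n)$ is not Cauchy'' one cannot in general extract a pairwise $\varepsilon$-separated subsequence, and your proposed fix --- that ``the failure of the Cauchy property guarantees infinitely many such choices are available at each step'' --- is false. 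Take $X=\R$ and the sequence $0,1,0,1,\dots$: it is not Cauchy, yet once $0$ and $1$ have been selected every remaining term is at distance $0$ from one of them, so the greedy extraction halts immediately. More generally, a non-Cauchy sequence can be a finite interleaving of Cauchy subsequences with distinct limits, and then no infinite pairwise separated subsequence exists.

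The correct hypothesis for the extraction is that $(x_n)$ has \emph{no Cauchy subsequence} (equivalently, that no tail of $\{x_n\}$ is totally bounded), and this is how the paper proceeds: it argues contrapositively, assuming $\varphi\in\mathcal{S}(X)\setminus\delta(X)$, and observes that completeness of $X$ then rules out any Cauchy subsequence, since such a subsequence would converge to some $x\in X$, forcing $\delta_{x_{n_k}}\to\delta_x$ by continuity of the functions in $\mathcal{A}(X)$ and hence $\varphi=\delta_x$ because $\tau_p$ is Hausdorff. Only after that is the $\varepsilon$-separated subsequence extracted, which is legitimate from non-total-boundedness. Your direct (non-contrapositive) argument can be repaired, but it requires a case distinction: if $(x_n)$ is not Cauchy, either it has no Cauchy subsequence (proceed as above), or it has a subsequence converging to some $x$ together with another subsequence remaining at distance at least $\varepsilon$ from $x$; in the latter case take $A$ to be a tail of the first subsequence inside $B(x,\varepsilon/3)$ and $B$ the second subsequence, so that $d_X(A,B)\geq 2\varepsilon/3>0$ and hypothesis (iii) again contradicts the convergence of $(f(x_n))$ to $\varphi(f)$. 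The remainder of your proposal --- the $(b)\Rightarrow(a)$ direction via Proposition \ref{homeo} and the final identification $\varphi=\delta_x$ --- is sound and in line with the paper's proof.
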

\begin{proof}
Assume $(b)$, and consider a countable neighborhood basis $(V_n)$ for $x\in X$ such that $\varphi=\delta_X$. Proposition~\ref{homeo} implies that the family $(\overline{\delta(V_n)}^{\tau_p})$ is a $\tau_p$-neighborhood basis for $\varphi$. Conversely, assume $\varphi\in \mathcal{S}(X)\setminus \delta(X)$ has such a neighborhood basis. By Proposition~\ref{denso}, there exist a sequence $(x_n)$ in $X$ such that $\delta_{x_n}$ converges to $\varphi$ in $(\mathcal{S}(X),\tau_p)$. This implies, by completeness of $(X,d)$, that $(x_n)$ has no Cauchy sub-sequence, otherwise such a sub-sequence would be convergent to $x\in X$, which would contradict the fact that $\varphi \notin \delta(X)$, as the product topology separates points. Therefore, there exists $\varepsilon>0$ and a sub-sequence $(x_{n_k})$ such that $d(x_{n_k},x_{n_j})\geq \varepsilon$ whenever $k\neq j$. Define $A=\{x_{n_k}\!~:~\!k \text{ is odd}\}$ and $B=\{x_{n_k}~\!:~\!k \text{ is even}\}$. Since $\mathcal{G}(X)$ is uniformly separating, we can find $f\in \mathcal{G}(X)$ such that $\overline{f(A)}\cap \overline{f(B)}=\emptyset$, but, since $(x_n)$ converges to $\varphi$ in $\tau_p$, we have that $f(x_n)$ converges (in the $|\cdot|$-topology of $\R$) to $\varphi(f)$, which then must belong to $\in \overline{f(A)}\cap \overline{f(B)}$.
\end{proof}

We can now proceed to the proof of Theorem~\ref{BSt}.

\begin{proof}
Let $T:\mathcal{A}(Y)\to \mathcal{A}(X)$ be an order-preserving isomorphism of extended asymmetric normed algebras, and consider the dual mapping $T^*:\mathcal{A}(X)^*\to \mathcal{A}(Y)^*$ defined by the formula
$$\langle T^*\varphi,f\rangle=\langle \varphi, Tf\rangle \text{ for all }f\in \mathcal{A}(Y).$$
We have that $T^*$ is well defined, linear, bijective and $\tau_p$-$\tau_p$ continuous. The algebraic properties of $T$, along with the assumed positivity, guarantee that $T^*$ sends positive and multiplicative functionals in $\mathcal{A}(X)$ to positive and multiplicative functionals in $\mathcal{A}(Y)$, that is, $T^*$ preserves the corresponding structure spaces. Moreover, Lemma~\ref{superlema} ensures that $T^*$ sends $\delta(X)$ into $\delta(Y)$. Then, we can define $\tau:X\to Y$ as $\tau(x)=\delta_Y^{-1}T^*(\delta_X(x))$, where $\delta_X$ and $\delta_Y$ are the corresponding embeddings of $X$ and $Y$ into $\mathcal{A}(X)$ and $\mathcal{A}(Y)$. The formula $Tf=f\circ \tau$ then follows from the definition of $\tau$. Injectivity of $\tau$ follows from the fact that $T$ is surjective and that $\mathcal{G}(X)$ is separating for $d_X$, and surjectivity follows directly from the properties of $T^*$. Finally, the continuity of $T$ implies that $T$ sends $\mathcal{G}(Y)$ into $\mathcal{G}(X)$, so hypothesis $(ii)$ guarantees that $\tau\in \mathcal{F}(X,Y)$. The same argument for the isomorphism $T^{-1}$ yields that $\tau^{-1}\in \mathcal{F}(Y,X)$.
\end{proof}

As we have already mentioned, Theorems \ref{csl-BS} and \ref{dsl-BS} were obtained as a consequence of Theorem \ref{BSt}. Some further applications are given below.

\begin{corollary}\label{examplesbst}
Theorem~\ref{BSt} can be applied to the following classes of spaces of real-valued functions $\mathcal{G}(X)$.
\begin{itemize}
    \item[(a)] $C^{b,+}(X)$ of bounded, non-negative, continuous functions,  on a completely {\em metrizable} topological space $X$, endowed with the supremum norm. In this case, $\tau$ will be an homeomorphism.
    \item[(b)] $D^{b,+}(X)$, of bounded, non-negative pointwise Lipschitz functions with bounded pointwise Lipschitz constant, on a complete metric space $X$, endowed with the norm
        $$\|f\|=\max\{\|f\|_\infty, \|\mathrm{Lip}(f)\|_\infty\}.$$
        In this case, $\tau$ will be a $C_{SL}$-homeomorphism.
    \item[(c)] $D^{b,+}_{SL}(X)$, of bounded, non-negative continuous functions with bounded ascendent metric slope, on a complete metric space $X$, endowed with the asymmetric norm
        $$
        \| f |_D=\max\{\|f\|_\infty, \|\mathrm{SLip} f\|_\infty\}.
        $$ In this case, $\tau$ will be a $C_{SL}$-homeomorphism.
%

 \item[(d)] $C_{b,+}^1(\mathcal{X})$, of bounded, non-negative functions with bounded derivative on a connected, reversible and complete Finsler manifold $\mathcal{X}$, endowed with the norm $\|f\|=\max\{\|f\|_\infty, \|df\|_\infty\}$. In this case, $\tau$ will be a $C^1$-smooth $C_{SL}$-homeomorphism.
    \item[(e)] $SC_{b,+}^1(\mathcal{X})$, of bounded, non-negative semi-Lipschitz functions of class $C^1$ on a connected and bicomplete Finsler manifold, endowed with the norm $\|f\|=\max\{\|f\|_\infty, \|df|_\infty\}$. In this case, $\tau$ will be a $C^1$-smooth $C_{SL}$-homeomorphism. We refer the reader to \cite{DJV} and \cite{V} for the definition of $\|df|_\infty$ on Finsler manifolds.

    \item[(f)] $\mathrm{LIP}_+^\infty(X)$, of bounded, non-negative Lipschitz functions on a complete quasi-convex 
    metric space, endowed with the norm $\|f\|=\max\{\|f\|_\infty, \mathrm{LIP}(f)\}$. In this case, $\tau$ will be a bi-Lipschitz homeomorphism.
    \item[(g)] $\mathrm{lip}_{+}(X)$ of non-negative locally flat Lipschitz functions on a compact and purely $1$-unrectifiable metric space $X$, endowed with the norm $\|f\|=\max\{\|f\|_\infty, \mathrm{LIP}(f)\}$. In this case, $\tau$ will be a bi-Lipschitz homeomorphism.
\end{itemize}
\end{corollary}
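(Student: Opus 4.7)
The plan is to apply Theorem \ref{BSt} separately for each of the seven classes, which reduces the proof to verifying hypotheses (i), (ii)/(ii'), and (iii) for the appropriate choice of morphism class $\mathcal{F}(X,Y)$ and cones $\mathcal{G}(X), \mathcal{G}(Y)$. Hypothesis (i) is a routine algebraic check in every case: the constant function $1$ lies in the cone, the indicated norm is submultiplicative (up to a universal constant) on bounded non-negative functions, and it dominates $\|\cdot\|_\infty$ either trivially or because the supremum norm appears explicitly in the $\max$.

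For the uniform separation property (iii), the common strategy is to produce, for subsets $A, B$ with $d(A,B)>0$, an appropriate bounded truncation of a distance-type function $x \mapsto \min\{\lambda\, d(x,A), 1\}$ (or its symmetrized analogue, in the asymmetric cases handled via Corollary \ref{separan}). In (a), (b), (c) such truncations already lie in the cone because truncation kills slopes outside a bounded annulus. In (d), (e) one invokes the existence of $C^1$ partitions of unity and distance-like smooth bumps on a Finsler manifold, as detailed in \cite{DJV} and \cite{V}, to obtain smooth separating functions with bounded differential. In (f) quasi-convexity makes the distance function globally Lipschitz, so the truncation is directly in $\mathrm{LIP}_{+}^\infty(X)$. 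For (g) the separation follows from the main result of \cite{AGPP} cited in Remark \ref{flat}: on a compact purely $1$-unrectifiable metric space, $\mathrm{lip}(X)$ separates points, and since it is a unital subalgebra of $C(X)$, the Stone-Weierstrass theorem yields uniform density, hence uniform separation of disjoint closed sets by non-negative elements of $\mathrm{lip}_{+}(X)$.

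For the scalarization hypotheses (ii)/(ii'), I identify the target morphism class: continuous maps in (a), $C_{SL}$-homeomorphisms in (b), (c), $C^1$-smooth $C_{SL}$-homeomorphisms in (d), (e), and bi-Lipschitz homeomorphisms in (f), (g). The scalarization then proceeds, in each case, by the same ``local detector'' argument used in Lemmas \ref{escalar}, \ref{lemmaCSL}, and \ref{lemmaDSL}: for each $y_0$ in the target, a bounded truncation of $y \mapsto d_Y(y_0,y)$ (or a smooth, respectively Lipschitz, variant thereof) lies in $\mathcal{G}(Y)$ and equals $d_Y(h(x_0), h(x))$ for $x$ near $x_0$ once composed with $h$. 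Thus the pointwise/smooth/Lipschitz regularity of $h$ can be read off from the corresponding regularity of $f \circ h$. In (d), (e) one further uses local coordinate $C^1$-bumps on the Finsler manifold to recover smoothness of $h$ itself from smoothness of each composition.

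The main obstacle, in my view, lies in case (g): the cone $\mathrm{lip}_{+}(X)$ of locally flat Lipschitz functions is too rigid for a naive distance-based construction to work, which is precisely why the non-trivial structural result of \cite{AGPP} is invoked. A secondary obstacle appears in (d), (e), where the $C^1$-regularity of $h$ must be extracted from compositions; this requires producing enough $C^1$ bounded smooth functions with bounded differential on $\mathcal{X}$ that behave locally like coordinate charts, and relies on the Finsler machinery developed in \cite{DJV} and \cite{V}.
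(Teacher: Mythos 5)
Your overall strategy is the same as the paper's: apply Theorem~\ref{BSt} case by case, checking (i) as a routine algebraic matter, (iii) via truncated distance functions (or, for (g), via the \cite{AGPP}--Stone--Weierstrass argument of Remark~\ref{flat}), and (ii)/(ii') via ``detector'' functions of the form $y\mapsto\min\{d_Y(y_0,y),1\}$. Cases (a)--(e) go through essentially as you describe; for (b),(c) this is exactly Lemma~\ref{lemmaCSL}, and for (e) the paper fleshes out your sketch by using the smooth semi-Lipschitz $\varepsilon$-approximations of $d_\mathcal{Y}(h(x_0),\cdot)\wedge 1$ from \cite[Corollary 2.31]{DJV} and deducing (d) as a special case of (e). However, your uniform treatment of the scalarization step hides two genuine gaps.

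First, in case (f) the conclusion requires $\tau$ to be \emph{globally} Lipschitz, so $\mathcal{F}(X,Y)$ must be $\mathrm{LIP}(X,Y)$ and hypothesis (ii) reads: if $f\circ h$ is Lipschitz for every bounded non-negative Lipschitz $f$, then $h$ is Lipschitz. Your local detector only yields $d_Y(h(x_0),h(x))\wedge 1\leq L_{x_0}\,d_X(x_0,x)$ with a constant $L_{x_0}=\mathrm{LIP}(f_{h(x_0)}\circ h)$ that depends on $x_0$; this gives pointwise Lipschitz bounds but no uniform bound, and quasi-convexity cannot upgrade a non-uniform pointwise bound to a global one. (Note also that quasi-convexity is not what makes the detector Lipschitz --- distance functions are always $1$-Lipschitz; its role is precisely in this local-to-global passage.) The paper closes this gap by invoking \cite[Theorem 3.12]{GJ}. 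Second, in case (g) the detector $\min\{d_Y(y_0,\cdot),1\}$ is Lipschitz but in general \emph{not} locally flat (it fails the flatness condition at $y_0$ itself), so it does not belong to $\mathrm{lip}_{+}(Y)$ and cannot be used for scalarization at all. You correctly flag the rigidity of $\mathrm{lip}_{+}(X)$ as the obstacle for the separation property, but the same rigidity breaks your scalarization argument; the paper instead derives the scalarization for (g) from compactness together with \cite[Theorem 3.9]{GJ}.
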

\begin{proof}
Case (a) is clear. In cases (b) and (c), we choose $\mathcal{F}(X,Y)=C_{SL}(X,Y)$, and the required scalarization properties  follow  from Lemma \ref{lemmaCSL}. Concerning case $(f)$, the  function space is $\mathcal{F}(X,Y)= \mathrm{LIP}(X,Y)$ and the scalarization property follows from Theorem 3.12 in \cite{GJ}. In all these cases, the separation property is clear. For case (e), we will apply Theorem~\ref{BSt} to the manifolds endowed with the symmetrized distances of the respective Finsler quasi-metrics, and using $\mathcal{F}(\mathcal{X},\mathcal{Y})=C_{SL}(\mathcal{X},\mathcal{Y})\cap C^1(\mathcal{X},\mathcal{Y})$. To show that condition (ii) holds, we start by proving that $h:\mathcal{X}\to \mathcal{Y}$ belongs to $C_{SL}(\mathcal{X},\mathcal{Y})$. To this end, fix $x_0\in \mathcal{X}$, and consider the semi-Lipschitz function $f(y)=d_\mathcal{Y}(h(x_0),y)\wedge 1$. For any $\varepsilon>0$, we can take a smooth semi-Lipschitz approximation $\tilde{f}$ of $f$ (see Corollary 2.31 in \cite{DJV}) such that $\vert f(y)-\tilde{f}(y)\vert <\varepsilon$ for all $y\in \mathcal{Y}$. Upon translation by a constant, we may assume that $\tilde{f}$ is non negative, so that $\tilde{f}\in SC_{b,+}^1(\mathcal{Y})$. Using the hypothesis of (ii), we obtain that $\tilde{f}\circ h\in SC_{b,+}^1(\mathcal{X})$. In particular, $\tilde{f}\circ h$ is semi-Lipschitz. Let $L=\mathrm{SLIP}(\tilde{f}\circ h)$. Since $\tilde{f}$ is an approximation of $f$, we have that ${f(h(x))=d_\mathcal{Y}(h(x_0),h(x))\wedge 1<\tilde{f}(h(x))+\varepsilon}$ for all $x\in \mathcal{X}$. On the other hand, we have that $\tilde{f}(h(x))\leq \tilde{f}(h(x_0))+Ld_\mathcal{X}(x_0,x)\leq \varepsilon+Ld_\mathcal{X}(x_0,x)$. It follows that ${d_\mathcal{Y}(h(x_0),h(x))\wedge 1<Ld_\mathcal{X}(x_0,x)+2\varepsilon}$, which implies $\mathrm{SLip}~h(x_0)\leq L$ (which depends on $x_0)$. Smoothness of $h$ follows from the fact that the composition $\varphi \circ h$ is of class $C^1$ for any (non negative) compactly supported and $C^1$-smooth $\varphi$. We remark that the separation property (iii) holds here due to the fact that every semi-Lipschitz function on a Finsler manifold is continuous (in general, every semi-Lipschitz function is continuous for the topology of the symmetrized distance of its domain, which coincides with the manifold topology for any Finsler manifold, as shown in Chapter 6.2 C of \cite{BCS}). Case (d) is in fact a consequence of (e) (see also \cite{GJR}). Concerning case (g), the scalarization property is a consequence of compactness and Theorem 3.9 in \cite{GJ}, and the separation property follows from the comments in Remark \ref{flat}.

\end{proof}
We summarize the results of Corollary \ref{examplesbst} in the following table.
\begin{table}[h]
\begin{tabular}{|c|c|c|c|c|}
\hline
   $\mathcal{G}(X)$& Hypothesis on $X$ & Type of functions &  Algebraic structure& $\tau$ is a...\\
   \hline

     $C^{b,+}(X)$& - & continuous & linear & homeomorphism\\
      \hline

     $\mathrm{LIP}^\infty_+(X)$ & Quasiconvex& Lipschitz & linear & Lipschitz \\
     & & & & homeomorphism\\
      \hline

     $D^{b,+}(X)$ &-& bounded pointwise&  linear & $C_{SL}$\\
     & & Lipschitz constant& & homeomorphism\\
      \hline

     $D^{b,+}_{\mathrm{SL}}(X)$&-&bounded & cone& $C_{SL}$ \\
     & & metric slope & & homeomorphism\\
      \hline

    $C_{b,+}^1(\mathcal{X})$ &Reversible  &$C^1$, Lipschitz & linear & $C_{SL}$\\
    & Finsler manifold & & & diffeomorphism\\
     \hline

     $SC_{b,+}^1(\mathcal{X})$& Finsler manifold& $C^1$, semi-Lipschitz & cone & $C_{SL}$\\
     & & & & diffeomorphism\\
     \hline

     $\mathrm{lip}_+(X)$ &  compact, purely &little Lipschitz & linear & Lipschitz \\
     & 1-unrectifiable& & & homeomorphism\\
     \hline

\end{tabular}
\\
\caption{Summary table of Corollary \ref{examplesbst}}
\end{table}
\newpage
\subsection{Lipschitz version}

Several of the examples of Corollary~\ref{examplesbst} could be improved upon, for example, adding some form of quantitative control over the homeomorphism $\tau$. This approach makes sense when $\tau$ is, for instance, a Lipschitz homeomorphism, but not when $\tau$ is only a topological homeomorphism. In order to refine these results, we need to add stronger hypothesis that will yield stronger conclusions, at the expense of reducing the scope of the result.

\begin{theorem}\label{BSlip}
 Let $(X,d_X)$ and $(Y,d_Y)$ be complete metric spaces, let $\mathcal{F}(X,Y)$ be a subset of $C(X,Y)$, and let $\mathcal{G}(X)$ and $\mathcal{G}(Y)$ be subcones of $\mathrm{LIP}(X)\cap C^{b,+}(X)$ and $\mathrm{LIP}(Y)\cap C^{b,+}(Y)$, respectively, such that:
   \begin{itemize}
     \item[(i)] For $Z\in \{X,Y\}$, the cone $\mathcal{G}(Z)$ is endowed with a conic norm $\|\cdot|_Z$ which satisfies $\|\cdot|_Z\geq \max\{\mathrm{LIP}(\cdot),\|\cdot\|_\infty\}$, and which makes it a unital normed conic-semiring under the usual addition and multiplication of real-valued functions.
    \item[(ii)]  $h\in\mathcal{F}(X,Y)$ provided  $f\circ h\in \mathcal{G}(X)$ for all $f\in \mathcal{G}(Y)$.
    \item[(ii)'] $h\in\mathcal{F}(Y,X)$ provided  $f\circ h\in \mathcal{G}(Y)$ for all $f\in \mathcal{G}(X)$.
    \item[(iii)] For $Z\in \{X,Y\}$, $\mathcal{G}(Z)$ is uniformly separating for $(Z,d_Z)$, in the sense that, for every pair of subsets $A$ and $B$ of $Z$ with $d_Z(A,B)>0$, there exists some $f\in \mathcal{G}(Z)$ such that $\overline{f(A)}^{|\cdot|} \cap \overline{f(B)}^{|\cdot|} =\emptyset$.
    \item [(iv)] There exists a constant $C\geq 1$ such that, for $Z\in \{X,Y\}$ and for every pair of points $w,z\in Z$, there exists a function $f\in \mathcal{G}(Z)$ with $\|f|\leq C$ such that $f(z)-f(w)=d_Z(w,z)$.
\end{itemize}
\noindent For $Z\in\{X,Y\}$, denote $\mathcal{A}(Z)=\mathrm{span}(\mathcal{G}(Z))$, endowed with the extended asymmetric norm induced by $\mathcal{G}(Z)$ and its natural algebra structure. If $T:\mathcal{A}(Y)\to \mathcal{A}(X)$ is an order-preserving isomorphism of extended asymmetric normed algebras, there exists a bi-Lipschitz homeomorphism  $\tau\in \mathcal{F}(X,Y)$ with $\tau^{-1}\in \mathcal{F}(Y,X)$,  such that
$$Tf=f\circ \tau$$
for all $f\in \mathcal{A}(Y)$,   and satisfying that 
$$\mathrm{LIP}(\tau)\leq C\|T|\quad\text{  and  }\quad\mathrm{LIP}(\tau^{-1})\leq C\|T^{-1}|.$$ 
\end{theorem}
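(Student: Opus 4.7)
The plan is to apply Theorem \ref{BSt} as a black box to produce the underlying bijection $\tau$, and then exploit the additional peaking condition (iv) to upgrade $\tau$ to a bi-Lipschitz homeomorphism with the required quantitative estimates.

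First, I would check that the hypotheses of Theorem \ref{BSt} already hold in the present setting: condition (i) here is stronger than the one in Theorem \ref{BSt} because $\|\cdot|_Z \geq \|\cdot\|_\infty$, while conditions (ii), (ii'), (iii) coincide verbatim. Therefore Theorem \ref{BSt} yields a bijection $\tau : X \to Y$ with $\tau \in \mathcal{F}(X,Y)$, $\tau^{-1} \in \mathcal{F}(Y,X)$, and $Tf = f\circ \tau$ for every $f \in \mathcal{A}(Y)$.

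The crux of the refinement is then the quantitative Lipschitz bound. Fix $x_1, x_2 \in X$ and write $y_i := \tau(x_i)$. Applying condition (iv) to the pair $(y_1, y_2)$ in $Y$, I would pick $f \in \mathcal{G}(Y)$ with $\|f|_Y \leq C$ and
\[
f(y_2) - f(y_1) = d_Y(y_1, y_2).
\]
Since $T$ is continuous of norm $\|T|$ and the finite subcone of $\mathcal{A}(X)$ is precisely $\mathcal{G}(X)$ by the construction in Proposition \ref{conicsemiringtoalgebra}, we have $Tf \in \mathcal{G}(X)$ with $\|Tf|_X \leq C\|T|$, and hypothesis (i) forces $\mathrm{LIP}(Tf) \leq \|Tf|_X \leq C\|T|$. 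Using $Tf = f\circ \tau$ we obtain
\[
d_Y(\tau(x_1), \tau(x_2)) = (Tf)(x_2) - (Tf)(x_1) \leq \mathrm{LIP}(Tf)\, d_X(x_1, x_2) \leq C\|T|\, d_X(x_1, x_2),
\]
so $\mathrm{LIP}(\tau) \leq C\|T|$. A symmetric argument using $T^{-1}g = g \circ \tau^{-1}$ and applying (iv) on $X$ yields $\mathrm{LIP}(\tau^{-1}) \leq C\|T^{-1}|$.

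There is no serious obstacle; the work is essentially bookkeeping once Theorem \ref{BSt} is available. The one point worth watching is that condition (iv) is one-sided, giving $f(z) - f(w) = d_Z(w,z)$ rather than $|f(z) - f(w)|$; but this is harmless since the target metric is symmetric and we are free to choose which of the two points plays the role of $w$ and which plays $z$.
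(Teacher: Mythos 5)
Your proposal is correct and follows essentially the same route as the paper's proof: invoke Theorem \ref{BSt} to get $\tau$ with $Tf=f\circ\tau$, then use the peaking functions from hypothesis (iv) together with $\mathrm{LIP}(Tf)\leq\|Tf|_X\leq\|T|\,\|f|_Y\leq C\|T|$ to obtain the quantitative bound, and symmetrize for $\tau^{-1}$. Your remark that $Tf$ lands in $\mathcal{G}(X)$ because continuous linear maps preserve finite subcones is exactly the justification the paper uses implicitly.
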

\begin{proof}
Clearly, all hypothesis for Theorem~\ref{BSt} are met. It only remains to prove the bound on the Lipschitz constant of $\tau$. Take two points $a,b\in X$, and let us estimate $d_Y(\tau (a),\tau (b))$. Condition $(iv)$ allows us to take $f\in \mathcal{G}(Y)$ with $\mathrm{LIP}(f)\leq C$ such that ${f(\tau(a))-f(\tau(b))=d_Y(\tau (a),\tau (b))}$. The composition formula yields that
$$d_Y(\tau (a),\tau (b))= Tf(a)-Tf(b).$$
Since the function $Tf$ is Lipschitz and $\mathrm{LIP}(Tf)\leq \|Tf|$ (by hypothesis $(i)$), we have that
$$
d_Y(\tau (a),\tau (b)) \leq \|Tf|d_X(a,b)\leq \|f|\|T|d_X(a,b)\leq C\|T|d_X(a,b),
$$
which implies $\tau$ is $C\|T|$-Lipschitz.

In the same way we obtain that $\tau^{-1}$ is also $C\|T^{-1}|$-Lipschitz.

\end{proof}
\begin{remark}\em
In what follows, the least constant $C$ satisfying condition $(iv)$ of Theorem~\ref{BSlip} will be called the \em{separation constant} of the families $\mathcal{G}(X)$ and $\mathcal{G}(Y)$.
\end{remark}

\begin{corollary}\label{examplesbslip}
For the following classes of spaces of real-valued functions $\mathcal{G}(X)$, we can obtain a quantitative bound on the homeomorphism $\tau:X\to Y$ obtained in terms of the separation constant $C$ and the norm of the isomorphism $T$. Let us denote $K=C\max\{\|T|, \|T^{-1}|\}$.
\begin{itemize}
    \item[(a)] $C_{b,+}^1(\mathcal{X})$ of bounded, non-negative functions with bounded derivative on a connected, complete and reversible Finsler manifold $\mathcal{X}$, endowed with the norm $\|f\|=\max\{\|f\|_\infty, \|df\|_\infty\}$, obtaining a Lipschitz diffeomorphism $\tau$ satisfying $$\max\{\|d\tau\|_\infty,\|d\tau^{-1}\|_\infty\}\leq K.$$
    \item[(b)] $\mathrm{LIP}_+^\infty(X)$, of bounded non-negative Lipschitz functions on a complete and quasi-convex metric space, endowed with the norm $\|f\|=\max\{\|f\|_\infty, \mathrm{LIP}(f)\}$, obtaining a Lipschitz homeomorphism $\tau$ satisfying $$\max\{\mathrm{LIP}(\tau),\mathrm{LIP}(\tau^{-1})\}\leq K.$$
    \item[(c)] $\mathrm{lip}_+(X)$ of non-negative little Lipschitz functions on a compact and purely $1$-unrectifiable metric space $X$, endowed with the norm $\|f\|=\max\{\|f\|_\infty, \mathrm{LIP}(f)\}$, obtaining a Lipschitz homeomorphism $\tau$ satisfying $$\max\{\mathrm{LIP}(\tau),\mathrm{LIP}(\tau^{-1})\}\leq K.$$
\end{itemize}
\end{corollary}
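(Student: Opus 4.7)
The plan is to verify the hypotheses of Theorem~\ref{BSlip} in each of the three cases. Conditions (i), (ii), (ii)' and (iii) have essentially been checked in the proof of Corollary~\ref{examplesbst} for the corresponding choices of $\mathcal{F}(X,Y)$ (namely $\mathrm{LIP}(X,Y)$ for cases (b) and (c); $C_{SL}(\mathcal{X},\mathcal{Y})\cap C^1(\mathcal{X},\mathcal{Y})$ for case (a)). The only new ingredient to be verified is hypothesis~(iv): for each space $Z$ and every pair $w,z\in Z$, one must produce a function $f\in \mathcal{G}(Z)$ of norm $\|f|\leq C$ such that $f(z)-f(w)=d_Z(w,z)$, with $C$ uniform in $w,z$. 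Once (iv) is in place, Theorem~\ref{BSlip} yields directly the bound $\max\{\mathrm{LIP}(\tau),\mathrm{LIP}(\tau^{-1})\}\leq C\max\{\|T|,\|T^{-1}|\}=K$.

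I would handle the three cases as follows. In case~(b), for $w,z\in X$, the truncated distance $f(y):=d(w,y)\wedge d(w,z)$ is 1-Lipschitz, non-negative, bounded, and satisfies $f(z)-f(w)=d(w,z)$; its norm in $\mathrm{LIP}_+^\infty(X)$ equals $\max\{1,d(w,z)\}$, so $C=\max\{1,\operatorname{diam}(X)\}$ suffices. In case~(c), compactness bounds the diameter; the Alberti--Gelli--Pajot--Pinamonti theorem (see Remark~\ref{flat}) ensures that $\mathrm{lip}(X)$ is uniformly dense in $C(X)$ and, in particular, admits peaking functions in the sense of Weaver, giving (iv) with $C$ close to $\operatorname{diam}(X)$. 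In case~(a), one applies the smooth approximation result for semi-Lipschitz functions on Finsler manifolds (Corollary~2.31 of \cite{DJV}) to $g(y)=d(w,y)\wedge\operatorname{diam}(\mathcal{X})$, obtaining a $C^1$-function with $\|df\|_\infty$ close to~$1$ and values at $w,z$ close to those of $g$; a small bump-function correction supported near $z$ then restores the exact equality while preserving the norm estimate.

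The main obstacle is obtaining the \emph{exact} identity $f(z)-f(w)=d_Z(w,z)$ in cases (a) and (c), where the natural candidate (the distance function itself) does not belong to $\mathcal{G}(Z)$. A robust workaround is to observe that the proof of Theorem~\ref{BSlip} only uses this equality to produce the inequality $d_Z(\tau(a),\tau(b))\leq\|Tf|\,d_X(a,b)$, so an $\varepsilon$-approximate peaking function (satisfying $f(z)-f(w)\geq(1-\varepsilon)d_Z(w,z)$) already yields the bound up to a factor $(1-\varepsilon)^{-1}$; passing to the limit recovers the stated estimate. Alternatively, in case (a) the correction by a smooth bump around $z$, and in case (c) a correction by a locally flat function separating $\{w\}$ from $\{z\}$, both give exact peaking functions with norms controlled by $\operatorname{diam}(Z)+\varepsilon$, completing the verification of hypothesis~(iv).
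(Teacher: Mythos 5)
Your overall route coincides with the paper's: conditions (i)--(iii) are inherited from the verifications already made for Corollary \ref{examplesbst}, and the only new ingredient is hypothesis (iv), which the paper settles in one sentence by asserting that the separation constant is $C=1$, via distance functions, their smooth approximations on Finsler manifolds, and \cite[Corollary 4.40]{Weaver} for $\lip(X)$. Your observation that an $\varepsilon$-approximate peaking function (with $f(z)-f(w)\geq (1-\varepsilon)d_Z(w,z)$ and $\|f|\leq C+\varepsilon$) already yields $d_Y(\tau(a),\tau(b))\leq \tfrac{C+\varepsilon}{1-\varepsilon}\|T|\,d_X(a,b)$, after which one lets $\varepsilon\to 0$, is correct and is a cleaner way to handle exactness in cases (a) and (c) than the bump-function corrections you also sketch.

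There is, however, one step that does not hold up as written: in case (c) you derive the existence of peaking functions from the uniform density of $\lip(X)$ in $C(X)$. Uniform density (which indeed follows from Stone--Weierstrass, as in Remark \ref{flat}) gives approximation in the supremum norm with no control whatsoever on Lipschitz constants, so it cannot produce functions with $\mathrm{LIP}(f)\leq C$ realizing (even approximately) the distance $d(w,z)$. What is needed is that $\lip(X)$ \emph{separates points uniformly}, which is the actual content of \cite[Theorem A]{AGPP} for compact purely $1$-unrectifiable spaces, combined with \cite[Corollary 4.40]{Weaver} to bring the separation factor down to $1$; this is precisely the citation the paper relies on. The same caveat applies to your alternative fix via ``a locally flat function separating $\{w\}$ from $\{z\}$'': topological separation does not bound the Lipschitz constant. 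A smaller point: since the norms in (b) and (c) dominate $\|\cdot\|_\infty$ and the functions are non-negative, any $f$ with $f(z)-f(w)=d(w,z)$ must satisfy $\|f\|_\infty\geq d(w,z)$, so your value $C=\max\{1,\operatorname{diam}(X)\}$ is the honest one (finite only for bounded spaces), whereas the paper's claim $C=1$ can only refer to the Lipschitz part of the norm.
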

In all the examples above the separation constant is $C=1$. In the Lipschitz case, this can be proved using distance functions. For the case of Riemannian and Finsler manifolds, this can be achieved by using smooth approximations of distance functions. For locally flat Lipschitz functions, it can be deduced from the fact that, for  boundedly compact metric spaces, the separation factor is always $1$ (see \cite[Corollary 4.40]{Weaver}). It follows that in all three cases of Corollary~\ref{examplesbslip}, we have $K=1$ whenever the isomorphism $T$ is in fact an isometry, which implies $\tau$ is also an isometry.

\begin{remark}\em
 In all cases mentioned in Corollary~\ref{examplesbslip}, the hypothesis of positivity of $T$ is unnecessary. Indeed, all algebras mentioned above are known to be closed under bounded inversion, which can be used to prove that every $\varphi$ in the structure space $\mathcal{S}(X)$ is positive, which guarantees that the dual operator $T^*$ sends $\mathcal{S}(X)$ into $\mathcal{S}(Y)$, thus eliminating the need for positivity of $T$. For the cases of Lipschitz functions, we can use that the algebras $\mathrm{LIP}_+^\infty(X)$ and $\mathrm{lip}_+(X)$ are also lattices, which allows us to use Lemma 2.3 of \cite{GJ} to prove that the elements of $\mathcal{S}(X)$ are also lattice homeomorphisms, and therefore, positive. In the manifold case, positivity of the elements of the structure space was shown in Section 6 of \cite{GJR}.
\end{remark}

\subsection{Pointwise Lipschitz version}\label{subsecbspointwise}

Our last result deals with pointwise Lipschitz functions and functions with bounded metric slopes. In order to obtain a bound on the pointwise Lipschitz constant of the desired homeomorphism between metric spaces $(X,d_X)$ and $(Y,d_Y)$, we will need the spaces to be uniformly locally radially quasi-convex (see Definition \ref{defuniflocallytadiallyqc}).

\begin{theorem}\label{BSpoint}
Let $(X,d_X)$ and $(Y,d_Y)$ be complete uniformly locally radially quasi-convex metric spaces, 
and let $\mathcal{G}(X)$ and $\mathcal{G}(Y)$ be subcones of $D_{SL}^{b,+}(X)$ and $D_{SL}^{b,+}(Y)$, respectively, such that:
\begin{itemize}
     \item[(i)] For $Z\in\{X,Y\}$, the subcone $\mathcal{G}(Z)$ is endowed with a conic norm $\|\cdot|_Z$ which satisfies $\|\cdot|_Z\geq \max\{\|\mathrm{SLip}(\cdot)\|_\infty,\|\cdot\|_\infty\}$, and which makes it into a unital normed conic-semiring under the usual addition and multiplication of real-valued functions.
    \item[(ii)]  $h:X\to Y$ is pointwise Lipschitz provided $f\circ h\in \mathcal{G}(X)$ for all $f\in \mathcal{G}(Y)$.
    \item[(ii)'] $h:Y\to X$ is pointwise Lipschitz provided $f\circ h\in \mathcal{G}(Y)$ for all $f\in \mathcal{G}(X)$.
    \item[(iii)] For $Z\in \{X,Y\}$, $\mathcal{G}(Z)$ is uniformly separating for $(Z,d_Z)$, in the sense that, for every pair of subsets $A$ and $B$ of $Z$ with $d_Z(A,B)>0$, there exists some $f\in \mathcal{G}(Z)$ such that $\overline{f(A)}^{|\cdot|} \cap \overline{f(B)}^{|\cdot|} =\emptyset$.
    \item [(iv)] There exists a constant $C\geq 1$ such that, for $Z\in \{X,Y\}$ and for every pair of points $w,z\in Z$, there exists a function $f\in \mathcal{G}(Z)$ with $\|f|\leq C$ such that $f(z)-f(w)=d_Z(w,z)$.
\end{itemize}
\noindent For $Z\in\{X,Y\}$, denote $\mathcal{A}(Z)=\mathrm{span}(\mathcal{G}(Z))$, endowed with the extended asymmetric norm associated with $\mathcal{G}(Z)$, and its natural algebra structure. If $T:\mathcal{A}(Y)\to \mathcal{A}(X)$ is an order-preserving isomorphism of extended asymmetric normed algebras, there exists a bi-pointwise Lipschitz homeomorphism  $\tau:X \to Y$ such that
$$Tf=f\circ \tau$$
for all $f\in \mathcal{A}(Y)$. Moreover,
$$
\mathrm{Lip}(\tau)\leq C\, K_X \,\|T| \quad \text{  and  }\quad \mathrm{Lip}(\tau^{-1})\leq C\, K_Y \,\|T^{-1}|,
$$
where $K_X\geq 1$ and $K_Y\geq 1$ are the constants associated with the uniform local radial quasi-convexity of $X$ and $Y$, respectively.
\end{theorem}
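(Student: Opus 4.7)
The plan is to first invoke Theorem~\ref{BSt} to produce the underlying homeomorphism $\tau$, and then to promote its qualitative conclusion to the stated quantitative bound by combining hypothesis~(iv), the length lemma (Lemma~\ref{lemmacurva}) and the uniform local radial quasi-convexity of $X$ and $Y$. The abstract machinery of Theorem~\ref{BSt} does essentially all of the structural work; the genuinely new ingredient, absent from the global-Lipschitz version (Theorem~\ref{BSlip}), is that one must now pass from a \emph{pointwise} slope bound to a metric distance estimate, and this is precisely where the length lemma together with the uniform local radial quasi-convexity enters.

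First I would set $\mathcal{F}(X,Y)$ equal to the class of pointwise-Lipschitz maps $X\to Y$, and symmetrically for $\mathcal{F}(Y,X)$. Such maps are automatically continuous, so $\mathcal{F}(X,Y)\subseteq C(X,Y)$; with this choice hypotheses~(ii) and~(ii') of Theorem~\ref{BSpoint} become exactly the corresponding hypotheses of Theorem~\ref{BSt}. The conic-norm assumption $\|\cdot|_Z\ge\|\cdot\|_\infty$ makes the topology of $\mathcal{G}(Z)$ finer than the sup-norm topology, while the remaining hypotheses (separation, completeness, unital normed conic-semiring structure) carry over verbatim; note also that $D_{SL}^{b,+}(Z)\subseteq C^{b,+}(Z)$, so $\mathcal{G}(Z)$ sits inside the cone required by Theorem~\ref{BSt}. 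Applying Theorem~\ref{BSt} produces a bijection $\tau\in\mathcal{F}(X,Y)$ with $\tau^{-1}\in\mathcal{F}(Y,X)$ satisfying $Tf=f\circ\tau$ on $\mathcal{A}(Y)$; in particular $\tau$ is already a bi-pointwise-Lipschitz homeomorphism.

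For the quantitative bound on $\mathrm{Lip}(\tau)$ I would fix $a\in X$ and use Definition~\ref{defuniflocallytadiallyqc} to obtain a neighborhood $U_a$ of $a$ such that every $b\in U_a$ is joined to $a$ by a rectifiable curve $\gamma_b\subseteq U_a$ with $\ell(\gamma_b)\le K_X\,d_X(a,b)$. For each such $b$, hypothesis~(iv) supplies $f_b\in\mathcal{G}(Y)$ with $\|f_b|_Y\le C$ and $f_b(\tau(b))-f_b(\tau(a))=d_Y(\tau(a),\tau(b))$. Setting $g_b:=Tf_b=f_b\circ\tau\in\mathcal{G}(X)$, continuity of $T$ together with hypothesis~(i) gives
$$
\|\SLip g_b\|_\infty\le\|g_b|_X\le\|T|\cdot\|f_b|_Y\le C\|T|.
$$
Applying the length lemma to $g_b$ along $\gamma_b$ then yields
$$
d_Y(\tau(a),\tau(b))=g_b(b)-g_b(a)\le\|\SLip g_b\|_\infty\,\ell(\gamma_b)\le C\,K_X\,\|T|\,d_X(a,b).
$$
Taking the $\limsup$ as $b\to a$ gives $\mathrm{Lip}(\tau)(a)\le C\,K_X\,\|T|$; since $a$ was arbitrary the stated bound on $\mathrm{Lip}(\tau)$ follows, and the symmetric argument applied to $T^{-1}$ produces the bound for $\mathrm{Lip}(\tau^{-1})$. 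The subtle point to watch is that $f_b$, and hence the estimate, genuinely depends on $b$: this is exactly why the uniformity of $C$ in~(iv) across endpoints is crucial, and why the length lemma (which only requires control of the global slope of each individual $g_b$) is the right tool.
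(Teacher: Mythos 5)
Your proposal is correct and follows essentially the same route as the paper: invoke Theorem~\ref{BSt} with $\mathcal{F}(X,Y)$ the class of pointwise Lipschitz maps, then for each nearby pair of points combine the separating function from hypothesis~(iv), the continuity bound $\|\mathrm{SLip}(Tf)\|_\infty\le\|Tf|\le C\|T|$, and Lemma~\ref{lemmacurva} applied along the short curve provided by uniform local radial quasi-convexity. Your explicit remark that the separating function depends on the endpoint $b$, so that the uniformity of $C$ in~(iv) is what makes the estimate close, is a point the paper leaves implicit but is exactly right.
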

\begin{proof}
Clearly, all hypothesis for Theorem~\ref{BSt} are met, if we consider  $\mathcal{F}(X,Y)$ to be the space of all pointwise Lipschitz functions from $X$ to $Y$. In this way we obtain a bi-pointwise Lipschitz homeomorphism $\tau:X \to Y$ such that $Tf=f\circ \tau$ for all $f\in \mathcal{A}(Y)$. It only remains to prove the bound on the pointwise Lipschitz constants of $\tau$ and $\tau^{-1}$. Fix a non isolated point $x_0\in X$, and let $U_{x_0}$ and $K_X$ be the neighborhood and constant given by the uniform local radial quasi-convexity of $X$. Then, for any point $x\in U_{x_0}$, let $\gamma_x:[a,b]\to U_{x_0}$ be a rectifiable curve such that $\gamma_x(a)=x_0$ and $\gamma_x(b)=x$. Using hypothesis (iv) of Theorem \ref{BSpoint}, take $f\in \mathcal{G}(Y)$ with $\|f|\leq C$ such that $f(\tau(x))-f(\tau(x_0))=d_Y(\tau (x_0),\tau (x))$. Using the composition formula, we get
\begin{equation*}\label{ineqpointw}
d_Y(\tau(x_0),\tau(x))=Tf(x_0)-Tf(x).
\end{equation*}
Next, we apply Lemma \ref{lemmacurva} to the function $Tf$ and the curve $\gamma_x$ connecting $x_0$ and $x$, obtaining that
$$
d_Y(\tau(x_0),\tau(x))\leq \|\mathrm{SLip}(Tf)\|_\infty \ell(\gamma_x).
$$
Since $\|\mathrm{SLip}(Tf)\|_\infty\leq \|Tf|\leq \|T|\|f|\leq C\|T|$ and $\ell(\gamma_x)\leq K_X d_X(x_0,x)$, we conclude that
$$
 d_Y(\tau(x_0),\tau(x))\leq K_X C\|T|d_X(x_0,x) \text{ for any }x\in U_{x_0},
$$
which implies $\mathrm{Lip}(\tau)(x_0)\leq K_X C\|T|$.

Working with $T^{-1}$ we obtain he corresponding bound for $\mathrm{Lip}(\tau^{-1})$.
\end{proof}

\begin{corollary}\label{corbspoint}
    Theorem~\ref{BSpoint} can be applied to the following spaces, provided $X$ is a complete and uniformly locally radially quasi-convex metric space:
    \begin{itemize}
        \item[(a)] $\mathcal{G}(X)=D^{b,+}(X)$ of bounded non-negative functions with bounded pointwise Lipschitz constant on $X$.
        \item[(b)] $\mathcal{G}(X)=D_{SL}^{b,+}(X)$ of bounded non-negative continuous functions with bounded metric slope on $X$.
    \end{itemize}
In both cases, the homeomorphism $\tau$ is bi-pointwise Lipschitz, with
$$
\|\mathrm{Lip}(\tau)\|_\infty \leq K_X C\|T|,
$$
where $K_X\geq 1$ is the constant associated with he uniform local radial quasi-convexity of $X$.

\end{corollary}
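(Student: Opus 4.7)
The plan is to invoke Theorem~\ref{BSpoint} directly, which reduces the task to verifying its four hypotheses for each of the two candidate cones $\mathcal{G}(X)\in\{D^{b,+}(X),D_{SL}^{b,+}(X)\}$ (with the analogous choices for $\mathcal{G}(Y)$). Since $X$ and $Y$ are already assumed complete and uniformly locally radially quasi-convex, nothing needs to be checked about the ambient spaces.

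For hypothesis (i), both candidate norms dominate $\max\{\|f\|_\infty,\|\mathrm{SLip}(f)\|_\infty\}$: on $D^{b,+}(X)$ this follows from the pointwise bound $\mathrm{SLip}(f)\leq\mathrm{Lip}(f)$ (a direct consequence of $d_u\leq|\cdot|$ on $\mathbb{R}$), and on $D_{SL}^{b,+}(X)$ it is the definition of $\|\cdot|_D$. Closure under the algebraic operations of a normed conic-semiring on bounded non-negative functions is routine (using the elementary inequality $\mathrm{SLip}(fg)(x)\leq\|f\|_\infty\mathrm{SLip}(g)(x)+\|g\|_\infty\mathrm{SLip}(f)(x)$ for submultiplicativity), and the constant function $\mathbf{1}$ provides the unit.

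For the scalarization conditions (ii) and (ii'), I would essentially transcribe the argument of Lemma~\ref{lemmaCSL}: given $y_0\in Y$, the truncated distance $g_{y_0}(y)=\min\{d_Y(y_0,y),r\}$ is bounded, non-negative and $1$-Lipschitz, so it lies simultaneously in $D^{b,+}(Y)$ and $D_{SL}^{b,+}(Y)$. Applying the composition hypothesis to $g_{y_0}$, and noting that $g_{y_0}\circ h$ coincides with $x\mapsto d_Y(h(x_0),h(x))$ on a neighborhood of $x_0=h^{-1}(y_0)$, I obtain $\mathrm{Lip}(h)(x_0)=\mathrm{Lip}(g_{y_0}\circ h)(x_0)<+\infty$, so $h$ is pointwise Lipschitz.

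The separation conditions (iii) and (iv) are both handled by truncated distance functions. For (iii), given $A,B$ with $d_Z(A,B)>0$, the function $f(x)=\min\{d_Z(x,A),d_Z(A,B)\}$ is bounded, non-negative, $1$-Lipschitz, and separates $A$ and $B$ by a positive gap. For (iv), given $w,z$ I would take $f(x)=\min\{d_Z(x,w),d_Z(w,z)\}$, which satisfies $f(z)-f(w)=d_Z(w,z)$ and has $\mathrm{Lip}(f)\leq 1$. The main subtlety I expect is exactly here: since $\|f\|_\infty=d_Z(w,z)$ is not globally bounded, the uniform constant $C$ from (iv) looks problematic. The way out is that the proof of Theorem~\ref{BSpoint} only applies (iv) to pairs $(w,z)=(\tau(x_0),\tau(x))$ with $x$ ranging in a small neighborhood of $x_0$; continuity of $\tau$ (already produced by the underlying application of Theorem~\ref{BSt}) lets us restrict to pairs with $d_Z(w,z)\leq 1$, so the truncated distance function has norm at most $1$ and $C=1$ suffices. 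Once all four hypotheses are in hand, Theorem~\ref{BSpoint} delivers the bi-pointwise Lipschitz homeomorphism $\tau$ together with the announced bound $\|\mathrm{Lip}(\tau)\|_\infty\leq K_X C\|T|$.
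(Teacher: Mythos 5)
Your verification of hypotheses (i)--(iii) is correct and is exactly what the corollary (whose proof the paper omits) intends: the norms on $D^{b,+}$ and $D_{SL}^{b,+}$ dominate $\max\{\|\cdot\|_\infty,\|\mathrm{SLip}(\cdot)\|_\infty\}$ because $\mathrm{SLip}\,f\leq \mathrm{Lip}\,f$, the conic-semiring structure follows from the Leibniz-type estimate, and the scalarization and separation conditions are handled by truncated distance functions as in Lemma \ref{lemmaCSL}. The point you raise about hypothesis (iv) is the one genuinely delicate step, and your diagnosis is right: since every $f\in\mathcal{G}(Z)$ is non-negative with $\|f\|_\infty\leq\|f|$, the requirement $f(z)-f(w)=d_Z(w,z)$ with $\|f|\leq C$ forces $d_Z(w,z)\leq C$, so (iv) as literally stated can only hold when $Z$ has diameter at most $C$; no choice of function can fix this for unbounded $X$. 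Your remedy --- observing that the proof of Theorem \ref{BSpoint} invokes (iv) only for pairs $w=\tau(x_0)$, $z=\tau(x)$ with $x$ in an arbitrarily small neighborhood of $x_0$ (the $\limsup$ defining $\mathrm{Lip}(\tau)(x_0)$ is unaffected by shrinking $U_{x_0}$, and the Length Lemma still applies to the curve inside the original $U_{x_0}$), so that the truncation $f(y)=\min\{d_Y(y,\tau(x_0)),d_Y(\tau(x_0),\tau(x))\}$ has $\|f|\leq 1$ --- is sound and gives $C=1$. Be aware, though, that this means you are not verifying Theorem \ref{BSpoint} as stated but rerunning its proof under a localized form of (iv); it would be cleaner to state that refinement explicitly (or to restrict (iv) to pairs with $d_Z(w,z)\leq 1$) rather than present it as a direct application.
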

\begin{remark}\em
In both cases mentioned in Corollary~\ref{corbspoint}, the hypothesis of positivity of $T$ is unnecessary. It was shown in \cite{DJ} that the algebra $D^\infty(X)$ is closed under bounded inversion, which is also known for $\mathrm{LIP}^\infty(X)$. Thus, it can be proven that every $\varphi$ in the structure space $\mathcal{S}(X)$ is positive, which guarantees that the dual operator $T^*$ maps $\mathcal{S}(X)$ into $\mathcal{S}(Y)$, thus eliminating the need for positivity of $T$. The same argument works for $D_{\mathrm{SL}}(X)$.
\end{remark}

\begin{corollary}\label{BSiso}
If the separation constant $C$ in hypothesis $(iv)$ of Theorem~\ref{BSlip} is $1$ and $T$ is an isometric isomorphism of extended asymmetric normed algebras, then $\tau $ is an isometry. If the separation constant $C$ in hypothesis $(iv)$ of Theorem~\ref{BSpoint} is $1$, as well as the constants $K_X$ and $K_Y$ associated with the uniform local radial quasi-convexity of $X$ and $Y$, respectively, and $T$ is an isometric isomorphism of extended asymmetric normed algebras, then $\tau $ is a pointwise isometry.
\end{corollary}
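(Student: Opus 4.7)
The plan is to specialize the quantitative bounds already delivered by Theorems~\ref{BSlip} and~\ref{BSpoint}; no additional machinery is required beyond this substitution.

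For the first assertion I would invoke Theorem~\ref{BSlip} directly, which produces a bi-Lipschitz homeomorphism $\tau:X\to Y$ satisfying $\mathrm{LIP}(\tau)\leq C\|T|$ and $\mathrm{LIP}(\tau^{-1})\leq C\|T^{-1}|$. Substituting $C=1$ together with $\|T|=\|T^{-1}|=1$ (the content of $T$ being an isometric isomorphism) makes both quantities collapse to $1$. I would then close with the standard squeeze
\begin{equation*}
d_Y(\tau(x),\tau(y))\leq d_X(x,y)=d_X\bigl(\tau^{-1}(\tau(x)),\tau^{-1}(\tau(y))\bigr)\leq d_Y(\tau(x),\tau(y)),
\end{equation*}
forcing equality throughout and hence showing that $\tau$ is an isometry.

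For the second assertion my plan is analogous, but applying Theorem~\ref{BSpoint}. Its conclusion already produces a bi-pointwise Lipschitz homeomorphism with pointwise bounds $\mathrm{Lip}(\tau)(x)\leq C\,K_X\,\|T|$ and $\mathrm{Lip}(\tau^{-1})(y)\leq C\,K_Y\,\|T^{-1}|$. Under the hypotheses $C=K_X=K_Y=1$ and $\|T|=\|T^{-1}|=1$, these reduce to $\mathrm{Lip}(\tau)(x)\leq 1$ and $\mathrm{Lip}(\tau^{-1})(y)\leq 1$ for every $x\in X$ and $y\in Y$. The remaining step is to turn these one-sided estimates into equalities at each non-isolated point. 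For this I would apply the pointwise Lipschitz chain rule (the symmetric, metric analogue of Lemma~\ref{lemmacompos}) to the composition $\tau^{-1}\circ \tau=\mathrm{id}_X$, which gives
\begin{equation*}
1=\mathrm{Lip}(\mathrm{id}_X)(x)\leq \mathrm{Lip}(\tau^{-1})(\tau(x))\cdot \mathrm{Lip}(\tau)(x)\leq 1,
\end{equation*}
forcing $\mathrm{Lip}(\tau)(x)=\mathrm{Lip}(\tau^{-1})(\tau(x))=1$ at every non-isolated $x$, which is exactly the notion of pointwise isometry.

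The only substep not appearing verbatim in the excerpt is the chain rule $\mathrm{Lip}(g\circ f)(x)\leq \mathrm{Lip}(g)(f(x))\cdot\mathrm{Lip}(f)(x)$ for real-valued (or metric-valued) pointwise Lipschitz maps between metric spaces, but this is the classical symmetric analogue of Lemma~\ref{lemmacompos} and is obtained by the same $\varepsilon$--$\delta$ manipulation carried out there. I do not foresee any genuine obstacle: the corollary is in essence a clean substitution of the specialized numerical hypotheses into the sharp bounds already produced by Theorems~\ref{BSlip} and~\ref{BSpoint}.
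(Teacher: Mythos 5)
Your proposal is correct and is essentially the intended argument: the paper states this corollary without proof precisely because it amounts to substituting $C=K_X=K_Y=1$ and $\|T|=\|T^{-1}|=1$ into the quantitative bounds of Theorems~\ref{BSlip} and~\ref{BSpoint} and then squeezing. For the pointwise case, note that instead of (or in addition to) the chain rule you can localize the same squeeze: $\mathrm{Lip}(\tau)(x)\leq 1$ gives $\limsup_{y\to x} d_Y(\tau(x),\tau(y))/d_X(x,y)\leq 1$, while $\mathrm{Lip}(\tau^{-1})(\tau(x))\leq 1$ gives the matching $\liminf\geq 1$, so the ratio actually converges to $1$, which is slightly stronger than the equality $\mathrm{Lip}(\tau)(x)=1$ your chain-rule step yields.
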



\end{document}